\documentclass[12pt]{amsart}

\usepackage{amssymb}
\usepackage{graphicx}
\usepackage{epstopdf}

\theoremstyle{plain}
\newtheorem{theorem}{Theorem}[section]

\theoremstyle{definition}
\newtheorem{definition}[theorem]{Definition}
\newtheorem{proposition}[theorem]{Proposition}
\newtheorem{remark}[theorem]{Remark}
\newtheorem{lemma}[theorem]{Lemma}
\newtheorem{corollary}[theorem]{Corollary}
\newtheorem{example}[theorem]{Example}
\newtheorem{problem}{Problem}
\newtheorem{conjecture}[problem]{Conjecture}

\newcommand{\M}{\mathcal{M}}

\newcommand{\G}{\mathcal{G}}

\newcommand{\q}{\mathbb{Q}}
\newcommand{\z}{\mathbb{Z}}
\newcommand{\Z}{\mathbb{Z}}
\renewcommand{\r}{\mathbb{R}}

\DeclareMathOperator{\LLS}{LLS}
\DeclareMathOperator{\lsin}{lsin}
\DeclareMathOperator{\il}{l\ell}
\DeclareMathOperator{\SL}{SL}
\DeclareMathOperator{\sign}{sign}
\DeclareMathOperator{\trace}{trace}

\title{Generalised Markov numbers}

\author{Oleg Karpenkov, Matty van-Son}
\date{04 September 2018}

\address{Oleg Karpenkov\\
University of Liverpool\\
Mathematical Sciences Building\\
Liverpool L69 7ZL, United Kingdom
} \email{karpenk@liv.ac.uk}

\address{Matty van-Son\\
	University of Liverpool\\
	Mathematical Sciences Building\\
	Liverpool L69 7ZL, United Kingdom
} \email{sgmvanso@liverpool.ac.uk}

\thanks
{
O.~Karpenkov is partially supported by EPSRC grant EP/N014499/1 (LCMH)}

\keywords{Geometry of continued fractions,
Perron Identity, binary quadratic indefinite form}

\begin{document}
\input{epsf}
\begin{abstract}
In this paper we introduce generalised Markov numbers and
extend the classical Markov theory for the discrete Markov spectrum
to the case of generalised Markov numbers.
In particular we show recursive properties for these numbers
and find corresponding values in the Markov spectrum.
Further we construct a counterexample to the
generalised Markov uniqueness conjecture.
The proposed generalisation is based on geometry of numbers.
It substantively uses lattice trigonometry and geometric theory of
continued numbers.
\end{abstract}

\maketitle
\tableofcontents

\section{Introduction}

In this paper we develop a geometric approach to the classical theory on the discrete Markov spectrum in terms of the geometric theory of continued fractions.
We show that the principles hidden in the Markov's theory
are much broader and can be substantively extended
beyond the limits of Markov's theory.
The aim of this paper is to introduce the generalisation
of Markov theory and to make the first steps in its study.

\vspace{2mm}

{
\noindent
{\bf Markov minima and Markov spectrum.}
Let us start with some classical definitions.
Traditionally the Markov spectrum is defined via certain minima of binary quadratic forms (see~\cite{mar1,mar2}).
Let us define the Markov minima first.
}

\begin{definition}\label{Markov-minimum}
Let $f$ be a binary quadratic form with positive discriminant.
The \textit{Markov minimum of $f$} is
$$
m(f)=\inf \limits_{\z^2 \setminus \{(0,0) \}} |f|.
$$
\end{definition}

In some sense the Markov minimum gives us information on how far the locus of $f$
is from the integer lattice (except for the origin).
In this context it is reasonable to consider the following normalisation.

\begin{definition}
Let $f$ be a binary quadratic form with positive discriminant $\Delta(f)$.
The \textit{normalised Markov minimum of $f$} is
$$
\M(f)=\frac{m(f)}{\sqrt{\Delta(f)}}.
$$
The set of all possible values for
$$
\displaystyle\frac{1}{\M(f)}
$$ is called the \textit{Markov Spectrum}.
\end{definition}

The functional $\M$ has various interesting properties.
In particular if two forms $f_1$ and $f_2$ are proportional
then their normalised Markov minima are the same.
The lattice preserving linear transformations of the coordinates
also do not change the value of the normalised Markov minima.

By that reason, $\M(f)$ depends only on the integer type (see Definition~\ref{IntCong} below) of the arrangement of two lines
forming the locus of $f$, which has an immediate trace in geometry of numbers.

\vspace{2mm}

{
\noindent{\bf Some history and background.}
The smallest element in the Markov spectrum is $\sqrt{5}$.
It is defined by the form
$$
x^2+xy-y^2
$$
and therefore it is closely related to the {\it golden ratio} in geometry of numbers.
The first elements in the Markov spectrum in increasing order are
as follows:
$$
\sqrt{5}, \qquad \sqrt{8}, \qquad \frac{\sqrt{221}}{5},
\qquad \frac{\sqrt{1517}}{13},
\qquad \frac{\sqrt{7565}}{29}, \quad \ldots
$$
The first two elements of the Markov spectrum were found in~\cite{Korkine1873} by A.~Korkine, G.~Zolotareff.
It turns out that the Markov spectrum is discrete at the segment
$[\sqrt{5}, 3]$ except the element $3$.
This segment of Markov spectrum was studied by A.~Markov in~\cite{mar1,mar2}.
We discuss his main results below.
}

\vspace{2mm}

The spectrum above the so-called Freiman's constant
$$
F=
\frac{2221564096+283748\sqrt{462)}}{491993569}=4.527829\ldots
$$
contains all real numbers (see~\cite{Freiman1975}).
The segment $[3,F]$ has a rather chaotic Markov spectrum,
there are various open problems regarding it.
There are numerous open gaps in this segment (i.e. open segments
that do not contain any element of Markov spectrum).
For the study of gaps we refer to a nice overview~\cite{cus1}.

\vspace{2mm}

\begin{remark}
Note that most of the generalised Markov and almost Markov trees are entirely contained in the Markov spectrum above 3,
which evidences the fractal nature of the spectrum.
\end{remark}

The Markov spectrum has connections to different areas of mathematics,
let us briefly mention some related references.
Hyperbolic properties of Markov numbers were studied by C.~Series
in~\cite{Series1985}.
A.~Sorrentino, K.~Spalding, and A.P.~Veselov
have studied various properties of interesting monotone functions related to the Markov spectrum and the growth rate of values of binary quadratic forms
in~\cite{SV2017-2} and~\cite{SorV2017,SV2017}.
B.~Eren and A.M.~Uluda\u{g} have described some properties of Jimm for
certain Markov irrationals in~\cite{Buket2018}.
In his paper~\cite{Gayfulin2017}
D.~Gaifulin studied attainable numbers and the Lagrange spectrum
(which is closely related to Markov spectrum).

\vspace{2mm}

Finally let us say a few words about the multidimensional case.
One can consider a form of degree $d$ in $d$ variables
corresponding to the product of $d$ linear factors.
The Markov minima and the $d$-dimensional Markov spectrum here are defined as in the two-dimensional case.
It is believed that {\it the $d$-dimensional Markov spectrum
for $d>2$ is discrete}, however
this statement has not been proven yet.
We refer the interested reader to the original manuscripts~\cite{Davenport1938,Davenport1938a, Davenport1939,Davenport1941,Davenport1943} by H.~Davenport,
and~\cite{Swinnerton-Dyer1971} by  H.P.F.~Swinnerton-Dyer,
and to a nice overview in the book~\cite{Gruber1987}
by P.M.~Gruber and C.G.~Lekkerkerker.

\vspace{2mm}

{
\noindent{\bf Markov numbers and their properties.}
Let us recall an important and surprising theorem by A.~Markov \cite{mar1,mar2} which relates
the Markov Spectrum below 3 to certain binary quadratic forms and solutions to the
\textit{Markov Diophantine equation}
$$
	x^2+y^2+z^2=3xyz.
$$
}
\begin{definition}
The solutions of this equation are called {\it Markov triples}.
Elements of Markov triples are said to be {\it Markov numbers}.
\end{definition}

Markov triples have a remarkable structure of a tree. This is due to the following three facts:

\vspace{1mm}
{
\noindent
{\bf Fact 1.}
If $(a,b,c)$ is a solution to the Markov Diophantine equation then any permutation of $(a,b,c)$ is a solution as well.
}
\vspace{1mm}

{
\noindent
{\bf Fact 2.}
If $(a,b,c)$ is a solution to the Markov Diophantine equation then the triple $(a,3ab-c,b)$ is a solution as well.
}

\vspace{1mm}

{
\noindent
{\bf Fact 3.}
All possible compositions of the operations described in Facts 1 and 2 applied to the triple $(1,1,1)$
give rise to all positive solutions to Markov Diophantine equation.
}

\vspace{2mm}

Let us order the elements in triples $(a,b,c)$ as follows: $b\ge a\ge c$ and denote them by vertices.
We also connect the vertex $(a,b,c)$ by a directed edge to the vertices $(a,3ab-c,b)$ and $(b,3bc-a,c)$
Then we have an arrangement of all the solutions as a graph which is actually a binary directed tree with the ``long'' root,
see in Figure~\ref{fig-mark}.

\begin{figure}
\epsfbox{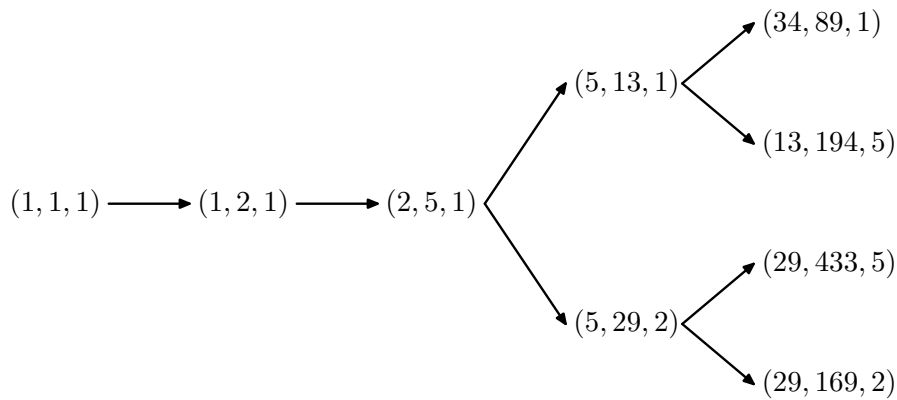}		
		\caption{The first 5 levels in the Markov tree.}
		\label{fig-mark}
\end{figure}

The following famous  Markov theorem links
the triples of the Markov tree
with the elements of the Markov spectrum below 3
by means of indefinite quadratic forms with integer coefficients.

\begin{theorem}\label{MarkovSpectrum}{\bf(A.~Markov~\cite{mar2}.)}
{\it $($i$)$}
The Markov spectrum below $3$ consists of the numbers
$\sqrt{9m^2-4}/m$, where $m$ is a positive integer such that
$$
m^2+m_1^2+m_2^2=3mm_1m_2, \qquad m_2\le m_1\le m,
$$
for some positive integers $m_1$ and $m_2$.

{
\noindent
{\it $($ii$)$} Let the triple $(m,m_1,m_2)$ fulfill the conditions of item $($i$)$.
Suppose that $u$ is the least positive residue satisfying
$$
m_2 u\equiv \pm m_1 \mod m
$$
and $v$ is defined from
$$
u^2+1=vm.
$$
Then the form
$$
f_m(x,y)=mx^2+(3m-2u)xy+(v-3u)y^2
$$
represents the value $\sqrt{9m^2-4}/m$ in the Markov spectrum.
}
\qed
\end{theorem}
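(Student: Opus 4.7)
The plan is to reduce the statement to an analysis of LLS sequences of the two quadratic irrational roots of $f$, and then to identify the admissible sequences with the combinatorial tree of Markov triples. From the outset I would use that $\M(f)$ is invariant under $\SL(2,\Z)$ and under scaling, which allows one to replace $f$ by a convenient integer representative in its integer congruence class and to reformulate $\M(f)$ as a quantity attached to the sail (Klein polygon) of $f$. Geometrically, $m(f)/\sqrt{\Delta(f)}$ is determined by the infimum of a simple lattice-length/lattice-sine functional along the vertices of this sail, the infimum being attained on a best approximation to one of the two roots.

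The core analytic step, and the one I expect to be the main obstacle, is a Vahlen--Hurwitz-type bound: if $1/\M(f)<3$ then every entry of the LLS sequence of $f$ is at most $2$. The idea is quantitative: a single entry of value $\geq 3$ places a lattice vertex of the sail so close to the locus of $f$ that it already forces $1/\M(f)\geq 3$, and this is the place where one has to work. Once the bound is in place, one is reduced to studying bi-infinite periodic sequences in the alphabet $\{1,2\}$. I would classify such sequences via the Christoffel/Stern--Brocot construction and establish a bijection between them and the tree of Figure~\ref{fig-mark}. Each admissible period produces a hyperbolic element of $\SL(2,\Z)$ whose trace turns out to be $3m$, and a direct computation with the corresponding fixed-point quadratic form gives $\Delta(f_m)=9m^2-4$ and $\M(f_m)=m/\sqrt{9m^2-4}$, yielding part (i).

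For part (ii) I would work in the opposite direction: starting from a Markov triple $(m,m_1,m_2)$, I would reconstruct the form $f_m$ explicitly from the period matrix of the sail of the corresponding Christoffel word. The quantities $u$ and $v$ in the statement are precisely the entries of this period matrix; the congruence $m_2u\equiv\pm m_1\pmod m$ encodes the slope of the attracting eigendirection modulo $m$, and the relation $u^2+1=vm$ is forced by the $\SL(2,\Z)$ determinant condition combined with the Markov equation $m^2+m_1^2+m_2^2=3mm_1m_2$. A direct substitution then produces $f_m(x,y)=mx^2+(3m-2u)xy+(v-3u)y^2$, and checking that this form realises the value $\sqrt{9m^2-4}/m$ reduces to a routine verification of the discriminant and of the minimum on the fundamental domain of the period matrix.
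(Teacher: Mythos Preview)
The paper does not give its own proof of this theorem: the statement ends with a bare \qed\ and is attributed to Markov's original papers~\cite{mar1,mar2}. It is quoted as a classical input, not derived. So there is no ``paper's proof'' to compare your proposal against.

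That said, your sketch is a reasonable outline of the standard modern argument, and it is in fact the very framework the paper builds in Sections~\ref{Perron Identity: general theory of Markov spectrum}--\ref{Markov tree and its generalisations} (Perron Identity, LLS sequences, reduced/Cohn matrices, the triple-graph $\G_\oplus\big((1,1),(2,2)\big)$) --- only the paper uses that framework to \emph{generalise} Markov's theorem rather than to re-prove it. Two comments on emphasis. First, the step you flag as ``the main obstacle'' (if $1/\M(f)<3$ then every LLS entry is $\le 2$) is essentially immediate from the Perron Identity of Theorem~\ref{Perron}: a single $a_i\ge 3$ already makes $a_i+[0;a_{i+1}:\ldots]+[0;a_{i-1}:\ldots]>3$. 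Second, the genuinely delicate part is the one you pass over quickly: not every bi-infinite $\{1,2\}$-sequence yields a value strictly below $3$, and isolating exactly the admissible periods (the Christoffel/Cohn words) and matching them bijectively to the Markov tree is where the real work lies. Your sketch acknowledges this step but does not indicate how you would carry it out; in a full proof that is the substantive combinatorial content.
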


The first steps to understand the phenomenon hidden in Markov's theorem were made by G.~Frobenius~\cite{frob} and R.~Remak~\cite{Remak1924}.
Following their works H.~Cohn introduced special matrices in~\cite{Cohn1955} and~\cite{Cohn1971} (which were later called {\it Cohn matrices})
whose traces are three times Markov numbers (here H.~Cohn used the
trace identity of~\cite{Fricke1896} by R.~Fricke).
As we show later (see Remark~\ref{GeneralizedMatrices}), the idea of Cohn matrices can be also extended to the case of generalised Markov and almost Markov trees, although the trace rule does not have
a straightforward generalisation.

\vspace{2mm}

{
\noindent
{\bf Main objectives of this paper.}
The generalisation of the classical Markov theory on the discrete Markov spectrum consists of the following major elements.}

\vspace{2mm}

\begin{itemize}

\item First of all we introduce a geometric approach to the classical theory. (The outline see Diagram in Figure~\ref{diagram.4}.) This approach is based on interplay between continued fractions and convex geometry of lattice points in the cones.

\vspace{1mm}

\item Basing on lattice geometry related to the classical case we construct generalised almost Markov and Markov triples of numbers (see Section~\ref{Generalised Markov triples}).

\vspace{1mm}

\item Further we relate generalised Markov triples to the elements of the Markov spectrum. We find Markov minima for the forms related to the generalised Markov trees in Corollary~\ref{(1,0)-Minima-main}.

\vspace{1mm}

\item Our next goal is to study the recursive properties of generalised Markov numbers (Corollary~\ref{main cor}). These properties are essential for fast construction of the generalised Markov tree (the classical Markov tree is constructed iteratively by the formula of Fact 2 above).

\vspace{1mm}

\item Finally we collect the main properties of the generalised Markov trees in Theorem~\ref{generaliseTheorem} (see also the diagram in Figure~\ref{diagram.5}).

\vspace{1mm}

\item We produce counterexamples for one of the generalised uniqueness conjectures, see Examples~\ref{4-11} and~\ref{4-11-2}.

\end{itemize}

\vspace{2mm}

{\noindent
{\bf Organisation of the paper.}
We start in Section~\ref{Continued fractions and lattice geometry} with the definition of continued fractions and a discussion
of lattice geometry techniques related to continued fractions.
}

\vspace{2mm}

Section~\ref{Perron Identity: general theory of Markov spectrum} is dedicated to the classical Perron Identity. Our goal here is to relate the following objects: elements of the Markov spectrum,  LLS sequences, reduced arrangements, and reduced forms.

\vspace{2mm}

Further in Section~\ref{Theory of Markov spectrum for integer forms}
we discuss the case of integer forms with integer coefficients.
In this case all the corresponding LLS sequences are periodic.
This property enriches the Perron Identity with additional interesting maps
and relations. In particular the forms are linked now with
extremal reduced $\SL(2,\z)$ matrices.

\vspace{2mm}

In Section~\ref{Triple-graphs: definitions and examples} we introduce an important general triple-graph structure which perfectly fits to Markov theory and its generalisation proposed in this paper.
After briefly defining triple-graphs in Subsection~\ref{Definition of triple-graphs}
we show several basic examples of triple-graph structure
for Farey triples, Markov triples, triples of finite sequences, and
triples of $\SL(2,\z)$-matrices
(we refer to Subsections~\ref{G-EX1}, \ref{exMarkov1}, \ref{exLLS},
and~\ref{exMarkov2} respectively).
Finally we study several important lexicographically monotone and algorithmic properties of these triple graphs.
in Subsections~\ref{TreeStruct} and~\ref{TreeAlg}.

\vspace{2mm}

We introduce the extended theory of Markov theory in Section~\ref{Markov tree and its generalisations}.
After a brief discussion of the classical case in Subsection~\ref{Classical Markov theory in one diagram}
we formulate the main definitions and discuss the extended Markov theory in
Subsections~\ref{Markov LLS triple-graphs} and~\ref{Generalised Markov triples}.
We show the diagram for the extended Markov theory in Subsection~\ref{GeneralisedSection}
(see Theorem~\ref{generaliseTheorem}).
Finally in Subsections~\ref{Uniqueness conjecture for Markov triples}
and~\ref{A counterexample to the generalised uniqueness conjecture}
we discuss the uniqueness conjecture for both classical and generalised cases.
In particular we show two counterexamples for one of the generalised triple graphs in Examples~\ref{4-11} and~\ref{4-11-2}.

We conclude this article in Section~\ref{Related theorems and proofs}
with proving all necessary statements used in the generalised Markov theorem.

\section{Continued fractions and lattice geometry}
\label{Continued fractions and lattice geometry}

In Subsection~\ref{Regular continued fractions}
we recall classical definitions of continued fraction theory.
Further in Subsection~\ref{Basics of lattice geometry}
we introduce some necessary definitions of
integer lattice geometry and describe its connection to
continued fractions.

\subsection{Regular continued fractions}
\label{Regular continued fractions}

Let us fix some standard notation for sequences and their continued fractions.

\vspace{2mm}

All sequences will be considered within parentheses.
We write
$$
\big(a_1,\ldots, a_k, \langle b_1,\ldots, b_l\rangle \big)
$$
for an eventually periodic sequence with preperiod
$(a_1,\ldots, a_k)$ and period $(b_1,\ldots, b_l)$.

\begin{definition}\label{periodisation-def}
Let $\alpha=(a_1,\ldots, a_n)$ be a finite sequence.
Denote by $\langle \alpha\rangle$
the periodic infinite sequence $\alpha=(\langle a_1,\ldots, a_n \rangle)$
with period $\alpha$. We say that $\langle \alpha \rangle$
is the {\it periodisation} of $\alpha$.
\end{definition}

We write $\alpha^n$ to replace a subsequence $\alpha\alpha\ldots\alpha$,
where $\alpha$ is repeated $n$ times.

\begin{definition}
Let $(a_1,a_2,a_3,\ldots)$ be a sequence of positive integers, except $a_1$ which
can be an arbitrary integer. The sequence can be either finite or infinite here.
The expression
$$
a_1+\frac{1}{\displaystyle a_2+\frac{1}{\displaystyle a_3+\ldots}}
$$
is called a {\it regular continued fraction} for the sequence $(a_1,a_2,a_3,\ldots)$ and dented by
$[a_1:a_2;a_3;\ldots]$.
\end{definition}

In particular $[a_1;\ldots: a_k: \langle b_1{: \ldots :} b_l\rangle]$ is a periodic continued fractions  with preperiod
$(a_1,\ldots, a_k)$ and period $(b_1,\ldots, b_l)$.

\begin{remark}
Note that for every rational number there exists a unique regular continued fraction with an odd number of elements and there exists a unique regular continued fraction with an even number of elements. For irrational numbers we have both the existence and the uniqueness of regular continued fractions.
\end{remark}

\subsection{Basics of lattice geometry}
\label{Basics of lattice geometry}

In this subsection we define basic notions of lattice geometry: integer length and integer sine. Further we introduce convex sails for integer angles
and the LLS sequence (lattice-length-sine sequence).
The LLS sequences are a complete invariant distinguishing all the different integer angles up to integer congruence. In this paper LLS sequences play the leading role in generalising of Markov numbers.

\subsubsection{Integer lengths and integer sines}

A point in $\r^2$ is called {\it integer} if its coordinates are integers.
We say that a linear transformation is {\it integer}
if it preserves the lattice of integer points.

\vspace{2mm}

A segment is called {\it integer} if its endpoints are integer points.
An angle is called {\it integer} if its vertex is an integer point.
We say that the integer angle is {\it rational}
if it contains integer points distinct to the vertex on both of its edges.
We say that an arrangement of two lines is {\it integer} if both of the lines pass through the origin.

\vspace{2mm}

An affine transformation is called integer if it preserve the lattice of integer points. The group of affine transformations
is a semidirect product of $GL(2,\z)$ and the group of all translations by integer
vectors.

\begin{definition}\label{IntCong}
Two sets are called {\it integer congruent} (or have the same {\it integer type})
if there exists an integer affine transformation taking one to the other.
\end{definition}

\begin{definition}
The {\it integer length} of an integer vector $p_1p_2$
is the index of the sublattice generated by $p_1p_2$ in the integer lattice contained in the line $p_1p_2$.
Denote it by $\il(p_1p_2)$

The {\it integer sine} of a rational integer angle $\angle ABC$ is the index of the sublattice generated by all
the points at the edges of this angles in the lattice of integer points $\z^2$.
Denote it by $\lsin(ABC)$.
\end{definition}

\begin{remark}
The integer length of an integer segment coincides with the number of interior integer points plus one.

The integer sine of an integer angle $\angle ABC$ whose edges $AB$ and $AC$ do not
contain integer points is twice the Euclidean area of the triangle $ABC$.
\end{remark}

For further information on integer trigonometry we refer to~\cite{oleg2} and
\cite{oleg4} (see also in~\cite{oleg1}).

\subsubsection{Sails and LLS sequences}

The notions of integer sine and integer length are the main ingredients to construct
a complete invariant of integer angles and integer arrangements.

\begin{definition}
Let $\angle A$ be an an integer angle with vertex at $v$.
The boundary of the convex hull of all integer points
in the interior of the $\angle A$ except $v$ is called the {\it sail} of $\angle A$.
\end{definition}

The sail is a broken line with a finite or infinite number of elements and one or two rays in it.
Let $\ldots A_{-1}A_0A_1\ldots$ be the broken line with finite segments (namely we remove any rays from the boundary).
What remains is either a finite, one-side infinite, or two-side infinite broken line.
The {\it LLS sequence} is defined as follows:
$$
\begin{array}{l}
a_{2k}=\il(A_kA_{k+1});\\
a_{2k-1}=\lsin{\angle A_{k-1}A_{k}A_{k+1}};
\end{array}
$$
for all admissible $k$.

\begin{definition}
The {\it LLS sequence} of an integer arrangement of two lines is
the LLS sequence of any of four angles that are formed by the lines of the arrangement.
\end{definition}

\subsection{Continuants and their relations to integer sines}

Let us continue with the following classical definition.

\begin{definition}
The $n$-th continuant is a {\it polynomial} of degree $n$ defined recursively:
$$
\begin{array}{l}
K_0()=1;\\
K_1(x_1)=x_1;\\
K_n(x_1,\ldots,x_n)=x_nK_{n-1}(x_1,\ldots, x_{n-1})+K_{n-2}(x_1,\ldots, x_{n-2}).
\end{array}
$$
\end{definition}

For sequences of real numbers we use the following extended definition of continuants.

\begin{definition}
Consider a sequence $\alpha=(a_1,\ldots,a_n)$ and integers $i,j$ satisfying $1\le i\le j{+}1 \le n{+}1$.
A {\it partial continuant $K_i^j$} of $\alpha$
is a real number defined as follows:
$$
K_i^j(\alpha)=K_{j-i+1}(a_i,\ldots,a_j).
$$
For simplicity we write
$$
K(\alpha)=K_1^n(\alpha)
\quad \hbox{and}
\quad
\breve K(\alpha)=K_1^{n-1}(\alpha).
$$
\end{definition}

\begin{remark}
Let $\angle A$ be an integer angle with LLS sequence $\alpha$. Then
$$
\lsin(\angle A)=K_1^n(\alpha)= K(\alpha).
$$
\end{remark}

\section{Perron Identity: general theory of Markov spectrum}
\label{Perron Identity: general theory of Markov spectrum}

In this section we study interrelations between the elements of the Markov spectrum, LLS sequences, reduced arrangements, and reduced forms.
We start in Subsections~\ref{Generic arrangements and their LLS sequences}
and~\ref{Generic reduced forms} with definitions of generic arrangements and generic reduced forms.
Further in Subsection~\ref{Marked LLS sequences and reduced generic arrangements} we relate marked LLS sequences with reduced generic arrangements.
Finally in Subsection~\ref{Perron Identity in one diagram} we formulate the original Perron Identity and rewrite it in terms of mappings between
the elements of Markov spectrum, LLS sequences, reduced arrangements, and reduced forms. We will develop further theory based on this version of the Perron Identity.

\subsection{Generic arrangements and their LLS sequences}
\label{Generic arrangements and their LLS sequences}

In this paper we mostly study the following type of
the arrangements whose lines pass through the origin
(i.e. integer arrangements).

\begin{definition}
We say that an integer arrangement of lines is {\it generic}
if its lines do not contain integer points distinct to the origin.
\end{definition}

\begin{remark}
First of all we should mention that all four angles of any generic arrangement
have the same LLS sequence which is infinite in two sides.
The adjacent angles are dual in a sense that the integers lengths for the first angle
coincide with integer sines for the second angles and vise versa
(see~\cite[Chapter~2]{oleg1} for more details).
\end{remark}

\begin{remark}
Secondly, the LLS sequence is a complete invariant of generic integer arrangements with respect to the action of $\SL(2,\z)$.
Namely, for every infinite sequence of positive integers
(taken without any direction and a starting position)
there exists a unique integer-congruence class of generic arrangements whose LLS sequences coincide with the given one
(see~\cite[Chapter~7]{oleg1} for more details).
\end{remark}

\begin{definition}
For a generic arrangement $\mathcal A$ denote by $\LLS(\mathcal A)$ the LLS sequence of $\mathcal A$.
\end{definition}

\begin{example}
In Figure~\ref{duality.1} we consider an example of an arrangement in $\r^2$.
We show four convex hulls for each of the cones in the complement to this arrangement with grey. The sails (i.e. the boundaries of these convex hulls)
are endowed with integer lengths of the segments (black digits) and with integer sines of the integer angles (white digits). They form LLS sequences
of two types:

\vspace{2mm}

\begin{itemize}
\item Two LLS sequences with a period $(1,1,2,3)$: here integer lengths are $1$ and $2$, and integer sines are $1$ and $3$.

\vspace{1mm}

\item Two LLS sequences  with a period $(3,2,1,1)$: here integer lengths are $1$ and $3$, and integer sines are $1$ and $2$.
\end{itemize}

\vspace{2mm}

In both cases the order is taken counterclockwise.
Note that the integer sines of the first type of LLS sequences are the integer lengths of the second type LLS sequence and vise versa. This is an example of the classical duality between the LLS sequences of adjacent angles
(see, e.g, in~\cite[Chapter~2]{oleg1}).

\begin{figure}
\epsfbox{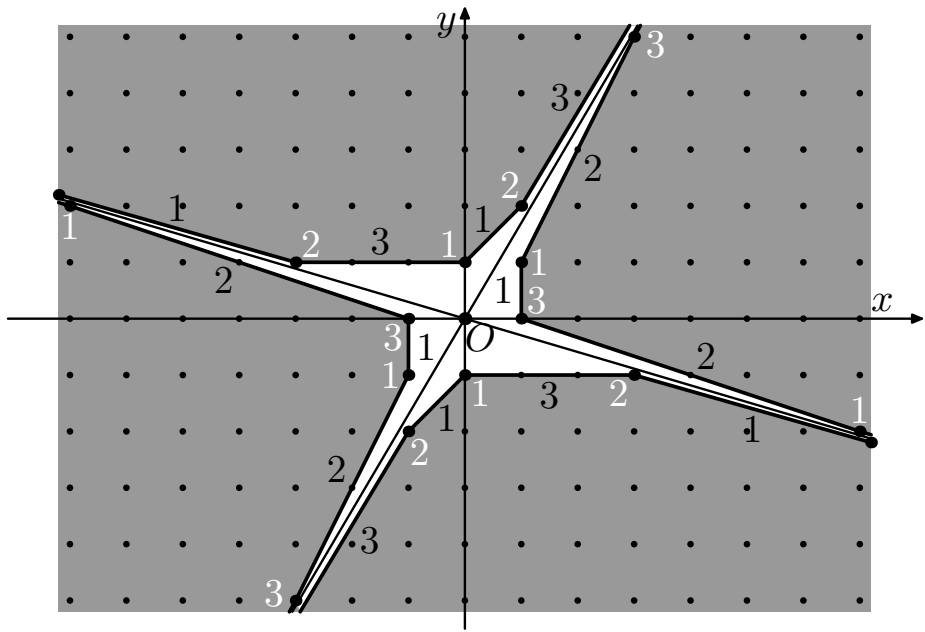}		
		\caption{An arrangement, its sails, and the LLS sequences.}
		\label{duality.1}
\end{figure}

\end{example}

We conclude this subsection with the following general remark.

\begin{remark}
The techniques of sails goes back to the original works of F.~Klein~\cite{Klein1895,Klein1896} who used sails for the generalisation of classical continued fractions to the multidimensional sail.
In fact the Klein multidimensional continued fraction seems to be an appropriate tool to study the multidimensional Markov spectrum.
Further this method was explored in more detail by V.~Arnold~\cite{Arnold2002,Arnold2003} and his school (for more details see~\cite{oleg1}).
An alternative approach was proposed by
J.H.~Conway in~\cite{Conway1997}.
Further K.~Spalding and A.P.~Veselov in~\cite{SV2018}
established a remarkable relation between Conway rivers and Klein-Arnold sails.
\end{remark}

\subsection{Generic reduced forms}
\label{Generic reduced forms}

Let us associate with every arrangement the following indefinite quadratic form.
\begin{definition}
We say that a form
$(y-px)(y-qx)$ is {\it reduced} if
$$
p> 1 \quad \hbox{and}  \quad   0>q >-1.
$$
Denote it by $f_{p,q}$.
\end{definition}

We have a similar notion of generality for quadratic forms.

\begin{definition}
Let $f$ be a quadratic form and let $r$ be a real number.
We say that $f$ {\it represents} $r$ if there exists some integer point $(x,y)\ne (0,0)$
such that $f(x,y)=r$.

\vspace{1mm}

A quadratic form $f$ is called {\it generic} if it does not represent $0$.
\end{definition}

\subsection{Marked LLS sequences and reduced generic arrangements}
\label{Marked LLS sequences and reduced generic arrangements}

Since the functional $\M$ is zero at non-generic arrangements, it remains to study the properties of $\M$ for generic arrangements.
Note also that $\M$ is constant at every $\SL(2,\z)$-orbit of integer generic arrangements.
So it is enough to choose some representatives from all $\SL(2,\z)$-orbits of integer generic arrangements.

\begin{definition}
We say that an integer generic arrangement formed by the lines $y=px$ and $y=qx$ is {\it reduced} if
$$
p> 1 \quad \hbox{and}  \quad   0>q >-1.
$$
Denote it by $\mathcal A(p,q)$.
\end{definition}

In order to relate both-side infinite sequences to generic arrangements we need to fix a starting point of a sequence and a direction, so we need the following definition.

\begin{definition}
A both side infinite sequence of numbers is said to be {\it marked} if a starting element together with a direction are chosen.
\end{definition}

Now we can bijectively associate to any arrangement its marked LLS sequences.

\begin{definition}
Consider the LLS sequence of a reduced integer arrangement
$\mathcal A=\{y=px, y=qx\}$ as above.
Let $A_0=(1,0)$ and $A_1=(1,\lfloor p \rfloor)$.
We say that the LLS sequence with the starting point $a_0=\il(A_0A_1)$ and the direction induced by the orientation of the sail from $A_0$ to $A_1$
is the {\it marked LLS sequence} for the arrangement. We denote it by $\LLS_*(\mathcal A)$.
\end{definition}

\begin{proposition}
The map $\mathcal A \to \LLS_*(\mathcal A)$ is a bijection between the set of all generic reduced arrangements
and the set of all marked infinite sequences.
\qed
\end{proposition}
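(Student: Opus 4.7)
The plan is to prove bijectivity by exhibiting an explicit inverse map and checking the two compositions. Given a marked infinite sequence $(a_i)_{i \in \z}$ of positive integers, I would reconstruct a candidate sail $\ldots A_{-1} A_0 A_1 \ldots$ by fixing $A_0 = (1, 0)$ and $A_1 = (1, a_0)$ (so that $\il(A_0 A_1) = a_0$ and the initial edge is vertical), and then recursively placing each subsequent vertex $A_{i \pm 1}$ as the unique lattice point satisfying the prescribed integer lengths $a_{2i}$ and integer sines $a_{2i-1}$, with the convexity condition that the broken line remains convex with respect to the origin. The two asymptotic directions of this broken line yield two lines $y = p x$ and $y = q x$ through the origin, producing a generic arrangement $\mathcal{A}(p, q)$.

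Reducedness of the reconstructed arrangement follows from the marking. Since $A_1 = (1, a_0)$ lies in the open angle containing the sail, we have $a_0 < p$, hence $p > a_0 \geq 1$. The condition $q > -1$ comes from $A_0 = (1, 0)$ being a sail vertex: were $q < -1$, the point $(1, -1)$ would also lie in the angle, and the collinear lattice points $(1, -1), (1, 0), (1, 1), \ldots, (1, \lfloor p \rfloor)$ on the line $x = 1$ would force $(1, 0)$ to be an interior point of a sail edge rather than a vertex; genericity excludes $q = -1$. Finally, $q < 0$ because $(1, 0)$ must lie on the positive side of the line $y = qx$ in the principal angle.

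Conversely, for a reduced arrangement $\mathcal{A}(p, q)$, the inequalities $p > 1$ and $q \in (-1, 0)$ imply that the lattice points on the line $x = 1$ inside the principal angle are exactly $(1, 0), (1, 1), \ldots, (1, \lfloor p \rfloor)$. Being collinear, these produce a single sail edge, so the distinguished sail vertex $A_0$ and its neighbour $A_1$ really are $(1, 0)$ and $(1, \lfloor p \rfloor)$, matching the marked reconstruction above. This gives the two compositions as identities.

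The main obstacle is justifying the well-definedness of the recursive reconstruction in the first paragraph: each step requires a unique lattice point $A_{i \pm 1}$, and the resulting broken line must have the claimed asymptotic behaviour. Rather than redo this combinatorially, I would invoke the complete-invariant result cited in Subsection~\ref{Generic arrangements and their LLS sequences} (based on \cite[Chapter~7]{oleg1}): every infinite sequence of positive integers arises as the LLS sequence of a generic arrangement, uniquely up to the $\SL(2, \z)$-action. The marking then rigidifies this symmetry: the stabilizer in $\SL(2, \z)$ of the ordered pair consisting of the vertex $(1, 0)$ and the primitive outgoing direction $(0, 1)$ is trivial, so the reduced arrangement matching a given marked sequence is unique in its $\SL(2, \z)$-orbit.
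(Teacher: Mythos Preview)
The paper gives no proof of this proposition: it is stated with an immediate $\qed$, relying on the complete-invariant remark just above it (LLS sequences classify generic arrangements up to $\SL(2,\z)$, citing \cite[Chapter~7]{oleg1}) and the explicit slope--sequence correspondence in the proposition that follows. Your argument fills in what the paper leaves implicit, and the pieces are correct: the check that the normalisation $A_0=(1,0)$, $A_1=(1,a_0)$ forces $p>1$ and $-1<q<0$ is sound, and your invocation of the cited classification for existence and uniqueness of an arrangement with prescribed LLS is exactly what the paper's bare $\qed$ is gesturing at.

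One point of logical order is worth tightening. In your second paragraph the contradiction ``if $q<-1$ then $(1,0)$ is an interior point of a sail edge, not a vertex'' presupposes that the recursively constructed broken line really is the sail of the arrangement determined by its asymptotic slopes---but that is precisely the well-definedness you defer to the cited result in your final paragraph. So the citation should come first: take the arrangement guaranteed by the classification, normalise by the unique $\SL(2,\z)$ element sending the marked vertex and outgoing primitive direction to $(1,0)$ and $(0,1)$, and \emph{then} run your reducedness checks on the resulting $p,q$. With that reordering the argument is complete.
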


Let us formulate here a general proposition, relating slopes of the lines in the arrangement and the corresponding LLS sequence
(for more details see~\cite[Chapter~3]{oleg1}).
\begin{proposition}
Consider the following two regular continued fraction expressions:
$$
\begin{array}{l}
a_+=[a_0^+;a_1:\ldots ],\\
a_-=-[a_0^-:a_{-1}:a_{-2}:\ldots ]\\
\end{array}
$$
where $a_0^\pm>0$, and at least one of them is nonzero.
Set also
$$
a_0=a_0^-+a_0^+.
$$
Then the LLS sequence of $f_{a_+,a_-}$ is
$(a_i)_{-\infty}^{+\infty}$, where $a_0$ corresponds to the segment
with endpoints $(1,a_0^{-})$ and $(1,a_0^{+})$.
\qed
\end{proposition}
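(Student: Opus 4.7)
The plan is to identify the sail of the angle of $\mathcal{A}(a_+, a_-)$ containing the positive $x$-direction and to read off its LLS sequence directly, using the classical dictionary between sails and regular continued fractions.

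First, I would locate the ``central'' segment of the sail on the line $x = 1$. Among integer points on $x = 1$ lying in the closed angle between $y = a_+ x$ and $y = a_- x$, the uppermost is $(1, a_0^+)$ and the lowermost is $(1, -a_0^-)$ (with the appropriate convention when one of $a_\pm$ happens to be an integer). Between these two points there are no other sail vertices on $x = 1$, and the line $x = 0$ contributes no nonzero lattice points in the angle, so the segment joining them is an edge of the sail. Its integer length is $\il = a_0^+ + a_0^- = a_0$, which supplies the index-$0$ entry of the LLS sequence.

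Second, I would treat the two half-sails extending from the central segment. The upper half-sail runs from $(1, a_0^+)$ toward the line $y = a_+ x$; by the classical Gauss--Klein correspondence recalled in \cite[Chapter~3]{oleg1}, the LLS readout of the sail of the angle bounded below by the $x$-axis and above by $y = a_+ x$ starts with $a_0^+$ and is followed by the alternating sequence $a_1, a_2, \ldots$ from the expansion $a_+ = [a_0^+; a_1: a_2: \ldots]$. The upper half-sail of our arrangement is exactly that sail with the initial length $a_0^+$ absorbed into $a_0$, so it contributes $(a_1, a_2, \ldots)$. Symmetrically, the lower half-sail contributes $(a_{-1}, a_{-2}, \ldots)$ coming from $-a_- = [a_0^-; a_{-1}: a_{-2}: \ldots]$. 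I would then verify that the sine entries $a_{\pm 1}$ at the junction vertices $(1, a_0^+)$ and $(1, -a_0^-)$ agree with the values computed in the two half-sails: $\lsin$ at a sail vertex depends only on the two adjacent edges, which are the same whether one considers the gluing with the coordinate axis or with the other line through the origin. Concatenating the two half-sails with the shared central edge thus produces the bi-infinite sequence $(a_i)_{-\infty}^{+\infty}$.

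The main obstacle will be the bookkeeping at index zero: making sure that $a_0^+$ and $a_0^-$, arising as initial partial quotients of two separate continued fractions, fuse unambiguously into the single integer-length entry $a_0 = a_0^+ + a_0^-$ without being double-counted as sines on either side. Degenerate cases (one of $a_\pm$ rational, giving a finite continued fraction; or one of $a_0^\pm = 0$, so that the sail already passes through $(1,0)$ or $(1, \pm 1)$ with a different local configuration) must be inspected separately, but once the central segment and its two endpoints are correctly identified, the remainder is a direct consequence of the standard sail/continued-fraction dictionary applied to each half-sail.
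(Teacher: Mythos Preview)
The paper does not give a proof of this proposition; it is stated with a \qed and the reader is referred to the integer-trigonometry references \cite{oleg2,oleg4} and \cite[Chapters~3,4]{oleg1} for the sail/continued-fraction dictionary. Your sketch is precisely the standard argument one finds there: locate the edge of the sail lying on the line $x=1$, read off its integer length as $a_0^{+}+a_0^{-}=a_0$, and then invoke the classical correspondence separately for the upper half-sail (governed by the expansion of $a_+$) and the lower half-sail (governed by the expansion of $-a_-$). Your observation about the junction sines at $(1,a_0^{+})$ and $(1,-a_0^{-})$ is the right one: the incoming edge at each of these vertices lies along $x=1$ regardless of whether the other bounding ray is the $x$-axis or the second line of the arrangement, so $\lsin$ is unchanged and the two half-sail LLS sequences glue correctly.

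One minor point worth flagging: you write the lower endpoint as $(1,-a_0^{-})$, whereas the paper's statement has $(1,a_0^{-})$. Your version is the geometrically correct one in general; the two coincide in the reduced situation (where $-1<a_-<0$ forces $a_0^{-}=0$), and the discrepancy in the general statement is presumably a sign slip in the paper --- note also the inconsistent hypothesis ``$a_0^{\pm}>0$, and at least one of them is nonzero''.
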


For further details on arrangements and their LLS sequences we refer to
the general theory of integer trigonometry~\cite{oleg2,oleg4}
and~\cite[Chapters~3,4]{oleg1}.

\subsection{Perron Identity in one diagram}
\label{Perron Identity in one diagram}

The cornerstone of the theory of Markov Spectrum is the following
classical theorem.

\begin{theorem}\label{Perron}
Consider an indefinite binary quadratic form $f$ with positive discriminant $\Delta(f)$.
Assume that $f$ does not attain zero at integer points distinct to the origin.
Let $\mathcal A$ be the zero locus arrangement for $f$ with
$$
LLS(\mathcal A)=(\ldots a_{-2},a_{-1},a_0,a_1,a_2.\ldots)
$$
for some positive integers $a_i, i\in \z$.
Then
\begin{equation}\label{JP}
\inf\limits_{\Z^2\setminus \{(0,0)\}}\big|f\big|=\inf\limits_{i\in \Z}\bigg(\frac{\sqrt{\Delta(f)}}{a_i+[0;a_{i+1}:a_{i+2}:\ldots]+[0;a_{i-1}:a_{i-2}:\ldots]}\bigg).
\end{equation}
\end{theorem}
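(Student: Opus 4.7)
The plan is to reduce the infimum of $|f|$ over $\z^2\setminus\{(0,0)\}$ to the infimum over integer points on the four sails of $\mathcal{A}$, and then to evaluate $|f|$ at each such sail point by means of continuants.

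Since both sides of \eqref{JP} transform identically under $f\mapsto cf$ and are invariant under $\SL(2,\z)$-changes of coordinates, I would begin by writing $f(x,y)=(y-px)(y-qx)$, where $y=px$ and $y=qx$ are the two lines of $\mathcal{A}$; then $\sqrt{\Delta(f)}=|p-q|$ and $|f(x,y)|=|y-px|\cdot|y-qx|$. Next I would prove the \emph{sail reduction}: any nonzero integer point in one of the four open cones cut out by $\mathcal{A}$ either lies on the sail of that cone or lies strictly inside its convex hull, and in the latter case the level curves $\{|f|=c\}$, being convex branches of hyperbolas asymptotic to $\mathcal{A}$, force the existence of an integer point of the sail whose $|f|$-value is no larger. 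Lattice points on the lines of $\mathcal{A}$ themselves contribute nothing by the genericity hypothesis on $f$. Thus the infimum on the left-hand side of \eqref{JP} is realised (or approached) along integer points of the sails.

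The heart of the proof is the evaluation of $|f|$ at an integer point of a sail. I would index the integer points $P_i$ on one of the four sails by $i\in\z$ so that $i=2k-1$ corresponds to the vertex $A_k$ (where $a_i$ is the integer sine $\lsin\angle A_{k-1}A_kA_{k+1}$) and $i=2k$ corresponds to an integer point in the interior of the edge $A_kA_{k+1}$ (where $a_i$ is the integer length). Using the identification of $p$ and $-q$ with the one-sided regular continued fractions read off from the LLS sequence (the proposition in Subsection~\ref{Marked LLS sequences and reduced generic arrangements}), the coordinates of $P_i$ express as partial continuants in the subsequences $(a_j)_{j<i}$ and $(a_j)_{j>i}$. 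A direct manipulation using the basic continuant identity $K_nK_{n-2}-K_{n-1}^2=\pm1$ yields
$$|f(P_i)| \;=\; \frac{|p-q|}{\,a_i + [0;a_{i+1}:a_{i+2}:\ldots] + [0;a_{i-1}:a_{i-2}:\ldots]\,},$$
and taking the infimum over $i\in\z$ gives \eqref{JP}.

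The main obstacle is this last computation: identifying the two tail continued fractions in the denominator with the normalised signed distances from $P_i$ to the two lines of $\mathcal{A}$. Sign and orientation bookkeeping — and in particular the uniform treatment of vertex-indices ($i$ odd) and interior-edge-indices ($i$ even) — is where care is required; but once the continuant formula for the coordinates of $P_i$ is in hand, the remainder reduces to standard algebraic identities for continuants.
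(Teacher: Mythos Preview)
The paper does not actually supply a proof of this theorem. It is stated as the classical \emph{Perron Identity} and attributed to the literature; the remark immediately following it notes that the LLS-sequence formulation is taken from~\cite{MattyOleg}. So there is no argument in the paper against which to compare yours.

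That said, your outline is the standard geometric proof and is essentially sound. One point deserves correction: the assertion that ``$i=2k$ corresponds to an integer point in the interior of the edge $A_kA_{k+1}$'' is not the right picture. An edge of integer length $a_{2k}$ has $a_{2k}-1$ interior integer points, none canonically singled out, and $|f|$ at such a point does not match the stated expression. The correct reading uses the duality of adjacent cones recalled in Subsection~\ref{Generic arrangements and their LLS sequences}: the odd-indexed terms $a_{2k-1}+[0;a_{2k}:\ldots]+[0;a_{2k-2}:\ldots]$ compute $\sqrt{\Delta(f)}/|f|$ at the vertices $A_k$ of your chosen sail, while the even-indexed terms compute the same quantity at the vertices of the \emph{adjacent} sail, where integer lengths and integer sines are interchanged. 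Both pairs of opposite cones must be covered because the infimum of $|f|$ may sit on either, and together they account for all $i\in\z$. With this adjustment your sail-reduction step and the continuant computation go through as you describe.
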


\begin{remark}
The expression of the theorem is known as the {\it Perron Identity}.
For simplicity Perron's theorem is expressed in the form of LLS sequences,
here we follow a recent paper~\cite{MattyOleg}.
\end{remark}

Let us rewrite the Perron Identity in the form of a diagram
(See Figure~\ref{diagram.1}).
We will extend this diagram further to visualise Markov theory and to generalise it.
Let us now describe the maps in this diagram.

\begin{figure}
$$\epsfbox{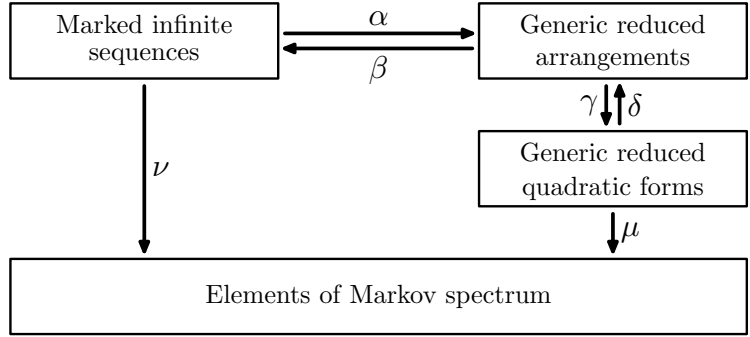}$$		
		\caption{Diagram for Perron Identity.}
		\label{diagram.1}
\end{figure}

\vspace{2mm}

{\bf Map $\alpha:$}
$$
(\ldots, a_{-2},a_{-1},a_0,a_1,a_2,\ldots)
\mapsto
\mathcal A\big([a_0,a_1,\ldots],-[0,a_{-1},a_{-2},\ldots]\big).
$$

\vspace{2mm}
{\bf Map $\beta:$}
Let $p=[a_0,a_1,\ldots]$, and $q=-[0,a_{-1},a_{-2},\ldots]$.
Then
$$
\mathcal A\big(p,q)
\mapsto
(\ldots, a_{-2},a_{-1},a_0,a_1,a_2,\ldots)
$$

\vspace{2mm}

{\bf Map $\gamma:$} Given $p,q$ set
$$
\mathcal A(p,q) \mapsto f_{p,q}.
$$

\vspace{2mm}

{\bf Map $\delta:$} Given $p,q$ set
$$
f_{p,q} \mapsto \mathcal A(p,q).
$$

\vspace{2mm}

{\bf Map $\nu$:}

$$
\begin{array}{l}
(\ldots, a_{-2},a_{-1},a_0, a_1,a_2,\ldots) \mapsto\\
\displaystyle
\qquad\qquad
\inf\limits_{i\in \Z}\Big(a_i+[0;a_{i+1}:a_{i+2}:\ldots]+[0;a_{i-1}:a_{i-2}:\ldots] \Big).
\end{array}
$$

\vspace{2mm}

{\bf Map $\mu$:}
$$
\mu: f \mapsto \inf\limits_{\Z^2\setminus \{(0,0)\}}\frac{\sqrt{\Delta(f)}}{\big|f\big|}.
$$

We conclude this section with several observation of the maps in the diagram.

\begin{remark}
It is clear that $\alpha$, $\beta$, $\gamma$, and $\delta$
are isomorphisms and in addition $\beta=\alpha^{-1}$ and $\delta=\gamma^{-1}$.
These isomorphisms give natural identifications
of the spaces of marked infinite sequences, the space of generic reduced arrangements, and the space of reduced quadratic forms.
The maps $\nu$ and $\mu$ are described by the left hand side and the right hand
side of Equation~(\ref{JP}).
The identifications provided by $\alpha$, $\beta$, $\gamma$,
and $\delta$ resulted in the Perron Identity.
\end{remark}

\begin{remark}
Map $\nu$ (and, equivalently $\mu$) does not have an inverse, as this map is not an injection.
The sequences mapping to one value are shown in Examples~\ref{4-11}
and~\ref{4-11-2} later.
\end{remark}

\begin{remark}
Since all the maps corresponding to arrows with opposite directions of the diagram in Figure~\ref{diagram.1} are inverse to each other,
the diagram is commutative. This leads to Theorem~\ref{Perron}
on the Perron Identity.
\end{remark}

\section{Theory of Markov spectrum for integer forms}
\label{Theory of Markov spectrum for integer forms}

In this section we discuss Markov theory for indefinite forms with integer coefficients.
We introduce reduced matrices and reduced forms and show their basic properties in Subsection~\ref{Reduced matrices}.
Further in Subsection~\ref{Extremal reduced forms, extremal reduced matrices, extremal sequences}
we define extremal reduced forms for which Markov minima are precisely at $(1,0)$; additionally we define extremal matrices and finite sequences
related to extremal forms.
Finally in Subsection~\ref{Theory of Markov spectrum for integer forms in one
diagram} we put together the most important relations of the theory
of the Markov spectrum for integer forms in one commutative diagram.

\subsection{Reduced matrices}\label{Reduced matrices}

In this subsection we introduce reduced matrices and reduced forms and show their multiplicative properties and relate them to the corresponding LLS sequences.

\subsubsection{Definition of reduced associated matrices and associated forms}
We start with the following general definition.

\begin{definition}
Let $(a_1,\ldots,a_n)$ be positive integers.
\begin{itemize}
\item
The matrix
$$
\left(
\begin{array}{cc}
K_2^{n-1} &K_2^{n}\\
K_1^{n-1} & K_1^n\\
\end{array}
\right)
$$
is said to be a {\it reduced} matrix {\it associated} to $(a_1,\ldots,a_n)$,
and denoted by $M_{a_1,\ldots,a_n}$.

\item
The form
$$
K_1^{n-1}x^2+(K_1^n-K_2^{n-1})xy-K_2^{n}y^2.
$$
is said to be {\it associated} to $(a_1,\ldots,a_n)$,
and denoted by $f_{a_1,\ldots,a_n}$.

\end{itemize}
\end{definition}

\subsubsection{Basic properties of reduced matrices and reduced forms}

Let us collect several important properties of reduced associated matrices and associated forms.

\begin{proposition}\label{BasicPropertiesReduced}
Let $(a_1,\ldots,a_n)$ and $(b_1,\ldots,b_m)$ be two sequences of positive integers. Then the following six statements hold.

\vspace{2mm}

{\noindent
$($i$)$ We have
$M_{a_1,\ldots,a_n}=\prod\limits_{i=1}^n M_{a_i}$.
}

\vspace{2mm}

{\noindent
$($ii$)$  We have $M_{a_1,\ldots,a_n}\cdot M_{b_1,\ldots,b_m}=M_{a_1,\ldots,a_n,b_1,\ldots,b_m}.$
}

\vspace{2mm}

{\noindent
$($iii$)$  It holds $\det M_{a_1,\ldots,a_n}=(-1)^n.$
}

\vspace{2mm}

{\noindent
$($iv$)$
The eigenlines of $M_{a_1,\ldots,a_n}$ are
$$
y=\alpha x \qquad \hbox{and} \qquad
y=\beta x
$$
where
$$
\alpha=[\langle a_1;\ldots:a_n\rangle] \qquad \hbox{and} \qquad
\beta=-[0;\langle a_n:\ldots:a_1\rangle].
$$
Therefore the LLS sequence for the corresponding arrangement is periodic
with period $(a_1,\ldots,a_n)$.
}

\vspace{2mm}

{\noindent
$($v$)$ The point $(1,0)$ is a vertex of a sail for $M_{a_1,\ldots,a_n}$.
}

\vspace{2mm}

{\noindent
$($vi$)$ The form $f_{a_1,\ldots,a_n}$ annulates
the eigenlines of $M_{a_1,\ldots,a_n}$.
}

\end{proposition}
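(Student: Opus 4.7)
The plan is to establish the six items in the stated order, since (i) is the computational engine behind the rest. For (i) I would induct on $n$: the base case $n=1$ gives $M_{a_1} = \bigl(\begin{smallmatrix}0&1\\1&a_1\end{smallmatrix}\bigr)$ directly from $K_i^{i-1} = K_0() = 1$. For the inductive step, I would multiply $M_{a_1,\ldots,a_{n-1}}$ on the right by $M_{a_n}$ and identify the four entries of the product with the continuants $K_2^{n-1}, K_2^n, K_1^{n-1}, K_1^n$ using the recursion $K_i^n = a_n K_i^{n-1} + K_i^{n-2}$ applied for $i=1,2$. Items (ii) and (iii) then fall out immediately: (ii) by associativity of matrix products, and (iii) from multiplicativity of determinant together with the direct computation $\det M_a = -1$.

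For (iv) the key observation is that each $M_a$ acts on slopes $t = y/x$ via the Möbius transformation $t \mapsto a + 1/t$; composing, $M_{a_1,\ldots,a_n}$ induces on slopes the finite continued-fraction map whose fixed points coincide with the eigendirections and are the roots of
\[
K_2^n\, t^2 + (K_2^{n-1} - K_1^n)\, t - K_1^{n-1} = 0.
\]
The root $\alpha = [\langle a_1;\ldots:a_n\rangle]$ is immediate from the self-referential equation for a purely periodic continued fraction. For the second root, I would apply the palindromic identity $K_m(x_1,\ldots,x_m) = K_m(x_m,\ldots,x_1)$ to the reversed sequence $(a_n,\ldots,a_1)$: the analogous fixed-point equation for $M_{a_n,\ldots,a_1}$, after the substitution $\gamma = -1/\beta$, reduces to the displayed quadratic, confirming that $\beta = -[0;\langle a_n:\ldots:a_1\rangle]$ is indeed the other root. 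Periodicity of the LLS sequence with period $(a_1,\ldots,a_n)$ then follows from the general slopes-to-LLS proposition stated at the end of Subsection~\ref{Marked LLS sequences and reduced generic arrangements}, applied with $a_0^+ = a_1$ and $a_0^- = 0$.

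Item (v) will be a corollary of the slope estimates $\alpha > a_1 \geq 1$ and $\beta \in (-1, 0)$ established along the way in (iv): these place the arrangement in the reduced form of Subsection~\ref{Marked LLS sequences and reduced generic arrangements}, so $(1, 0)$ is precisely the canonical starting vertex $A_0$ of the marked sail. Finally, (vi) is a substitution: plugging $y = tx$ into $f_{a_1,\ldots,a_n}$ yields $-x^2$ times exactly the quadratic in $t$ from (iv), so both eigendirections $\alpha, \beta$ annihilate the form.

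The main obstacle I expect lies in (iv): matching the two roots of the eigenvalue quadratic with the precise periodic continued-fraction expressions for $\alpha$ and $\beta$ requires the palindromic symmetry of continuants together with careful sign tracking in the substitution $\gamma = -1/\beta$. Once (iv) is in place, the remaining statements follow either from inspection of the formulas in (i) or from the sail/LLS dictionary already developed in Sections~\ref{Continued fractions and lattice geometry} and~\ref{Perron Identity: general theory of Markov spectrum}.
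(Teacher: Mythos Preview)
Your proposal is correct, and for items (i)--(iii) and (vi) it is essentially identical to the paper's own argument.

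For items (iv) and (v), however, you take a genuinely different route. The paper does \emph{not} compute the eigendirections directly: for (iv) it simply cites Gauss Reduction theory (\cite{Karpenkov2010}, \cite[Chapter~7]{oleg1}) for even $n$, and then handles odd $n$ by squaring $M_{a_1,\ldots,a_{2n+1}}$ to reduce to the even case. Your approach via the M\"obius action $t \mapsto a + 1/t$ and the explicit eigenvalue quadratic $K_2^n t^2 + (K_2^{n-1}-K_1^n)t - K_1^{n-1} = 0$ is more self-contained and handles all $n$ uniformly, at the cost of the palindromic-continuant bookkeeping you flagged for the second root. For (v), the paper argues geometrically---it traps $(1,0)$ as the unique lattice point in the triangle cut out by the two eigenlines and the line $y = 2x - 2$---whereas you observe that the slope estimates $\alpha > 1$, $-1 < \beta < 0$ already force the arrangement to be \emph{reduced} in the sense of Subsection~\ref{Marked LLS sequences and reduced generic arrangements}, so $(1,0)$ is the canonical vertex $A_0$. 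Your argument is cleaner and ties (v) more tightly to (iv); the paper's triangle argument is more ad hoc but avoids any appeal to the (implicit) fact that the reduced arrangement is generic.
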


\begin{proof}

{\it Item~$($i$)$.} We prove the statement by induction on the number of elements $n$ in the product.

{
\noindent
{\it Base of induction}. The statement is tautological for $n=1$.
}

{
\noindent
{\it Step of induction}.
Assume we prove the statement for all sequences of length $n$. Let us prove the
statement for an arbitrary sequence
$\alpha=(a_1,\ldots,a_{n+1})$.
First of all, from the definition of the continuant we get
$$
K_1^{n+1}=a_{n+1}K_1^n+K_1^{n-1}
\quad\hbox{and} \quad
K_2^{n+1}=a_{n+1}K_2^n+K_2^{n-1}.
$$

Further we have

$$
\begin{aligned}
\prod\limits_{i=1}^{n+1} M_{a_i}
={ }&
M_{a_1,\ldots,a_n}\cdot M_{a_{n+1}}
=
\left(
\begin{array}{cc}
K_2^{n-1} &K_2^{n}\\
K_1^{n-1} & K_1^n\\
\end{array}
\right)
\cdot
\left(
\begin{array}{cc}
0 &1\\
1 & a_{n+1}\\
\end{array}
\right)
\\
={ }&
\left(
\begin{array}{cc}
K_2^n &a_{n+1}K_2^n+K_2^{n-1} \\
K_1^n & a_{n+1}K_1^n+K_1^{n-1}\\
\end{array}
\right)
=
\left(
\begin{array}{cc}
K_2^{n} &K_2^{n+1}\\
K_1^n & K_1^{n+1}\\
\end{array}
\right)
\\
={ }&
M_{a_1,\ldots,a_{n+1}}.
\end{aligned}
$$
}

{\it Items~$($ii$)$ and~$($iii$)$} are straightforward corollaries of Item~$($i$)$.

\vspace{2mm}

{\it Item~$($iv$)$} This statement follows from general theory
of $\SL(2,\z)$ reduced matrices (Gauss Reduction theory) for the case of even $n$,
see e.g. in~\cite{Karpenkov2010} or in~\cite[Chapter~7]{oleg1}).

Consider now the case of a matrix $M_{a_1,\ldots,a_{2n+1}}$. Notice that
$$
M=M_{a_1,\ldots,a_{2n+1},a_1,\ldots,a_{2n+1}}=\big(M_{a_1,\ldots,a_{2n+1}}\big)^2.
$$
Therefore,
$M$ has the same eigenlines as $M_{a_1,\ldots,a_{2n+1}}$.
Now Item~$($iv$)$ for $M_{a_1,\ldots,a_{2n+1}}$
follows directly from Item~$($iv$)$ for $M$, which is true for $M$
by Gauss Reduction theory.

\vspace{2mm}

{\it Item~$($v$)$}:
Consider a triangle bounded by the eigenlines and the line $y=2x-2$.
On the one hand it contains the point $(1,0)$.
On the other hand the closure of its interior does not contain integer points other than $(1,0)$
due to the explicit expression of Item~$($iv$)$: for eigenlines.

Direct computation shows {\it Item~$($vi$)$}.
\end{proof}

\begin{remark}\label{GeneralizedMatrices}
In this paper we use
$\SL(2,\z)$-reduced matrices which were used in so called Gauss Reduction theory (for more details
see~\cite{Katok2003,Manin2002,Lewis1997,Karpenkov2010}
and also in~\cite[Section~7]{oleg1}).
We should note that there is an alternative choice of matrices
for which the main statements have a straightforward translation.
The reduced matrices and {\it alternative matrices} are as follows:
$$
\left(
\begin{array}{cc}
K_2^{n-1} &K_2^{n}\\
K_1^{n-1} & K_1^n\\
\end{array}
\right)
\quad
\hbox{and}
\quad
\left(
\begin{array}{cc}
K_1^{n} &K_1^{n-1}\\
K_2^{n} & K_2^{n-1}\\
\end{array}
\right).
$$
Here one matrix is obtained from another by a swap of $x$ and $y$ coordinates.

It remains to mention here that the theory of these matrices follow Cohn matrices for the classical Markov case, where the corresponding tree is generated,
for instance, by the following two Cohn matrices
$$
M=\left(
\begin{array}{cc}
1 &1\\
1 &2\\
\end{array}
\right)
\quad
\hbox{and}
\quad
N=\left(
\begin{array}{cc}
1 &2\\
2 &5\\
\end{array}
\right).
$$
(In the notation of Subsection~\ref{exMarkov2} below the corresponding
triple-graph is $\G_{M,N}(\SL(2,\z),\bullet).$)
Recall that Cohn matrices were introduced in~\cite{Cohn1955,Cohn1971} by H.~Cohn. They were used for the study of Markov numbers
based on works~\cite{frob} and~\cite{Remak1924}.
\end{remark}

\begin{remark}
It is interesting to note that any $\SL(2,\z)$ matrix
$$
\left(
\begin{array}{cc}
a&c\\
b&d\\
\end{array}
\right)
$$
that satisfies
$$
d>b\ge a \ge 0
$$
is reduced. It is then equivalent to the product of even number of matrices of
elementary type $M_{a_i}$.
\end{remark}

\subsection{Extremal reduced forms, extremal reduced matrices, extremal even sequences}
\label{Extremal reduced forms, extremal reduced matrices, extremal sequences}

We start with the following general remark.

\begin{remark}\label{even}
As we have seen from Proposition~\ref{BasicPropertiesReduced}$($iii$)$
a matrix $M_{a_1,\ldots,a_n}$ belongs to  $\SL(2,\z)$ if
and only if $n$ is even.
Further we restrict ourselves entirely to the case of $\SL(2,\z)$ matrices,
and therefore we study the case of even finite sequences.
\end{remark}

Let us finally give the following definition.

\begin{definition}\label{extremal}
$\quad$

\begin{itemize}

\item
We say that a finite even sequence of integers is {\it extremal}
if the associated form
attains its normalised Markov minimum at point $(1,0)$.

\item
An $\SL(2,\z)$ matrix associated to an extremal sequence is called {\it extremal}.

\item
A form associated to an extremal sequence is called {\it extremal}.
\end{itemize}
\end{definition}

\subsection{Theory of Markov spectrum for integer forms in one
diagram}
\label{Theory of Markov spectrum for integer forms in one
diagram}

\begin{figure}
$$\epsfbox{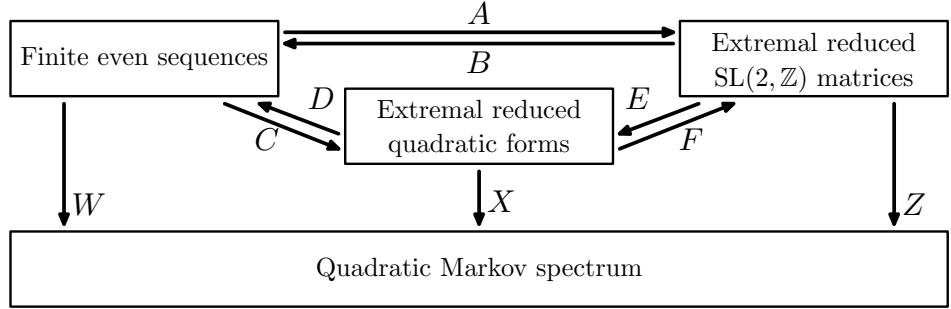}$$
		\caption{Sequences, forms, matrices, and Markov spectrum.}
		\label{diagram.2}
\end{figure}

In Figure~\ref{diagram.2} we show a diagram
that gives bijections between the set of all finite sequences,
extremal reduced matrices,
and extremal reduced forms (Maps~$A$-$F$).
Additionally we have mappings to the Markov spectrum (Maps~$W$,
$X$, and~$Z$). Let us describe these maps in more details.

\vspace{2mm}

{\bf Map $A$:} Let $(a_1,\ldots,a_n)$ be positive integers (here $n$ is assumed to be even). Then

\begin{equation}\label{MapA}
( a_1,\ldots,a_n )
\mapsto
M_{a_1,\ldots,a_n}=
\left(
\begin{array}{cc}
K_2^{n-1} & K_2^{n}\\
K_1^{n-1} & K_1^n\\
\end{array}
\right).
\end{equation}

{\bf Map $B$:}
For $d>b>a\ge 0$ (and therefore $c=\frac{ad-1}{b}$) and the corresponding reduced matrix we have:
\begin{equation}\label{MapB}
\left(
\begin{array}{cc}
a & c\\
b & d\\
\end{array}
\right)
\mapsto
\Big\langle a_1,\ldots, a_{2n-1},\Big\lfloor\frac{d{-}1}{b}\Big\rfloor\Big\rangle.
\end{equation}
Here $[a_1;a_2:\ldots:a_{2n-1}]$ is the regular odd continued fraction decomposition
for $b/a$.

\begin{proposition}
The maps $A$ and $B$ are inverse to each other.
\qed
\end{proposition}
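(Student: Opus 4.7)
The plan is to establish both $B\circ A=\mathrm{id}$ and $A\circ B=\mathrm{id}$ by direct computation, using the continuant recurrence, the determinant identity from Proposition~\ref{BasicPropertiesReduced}(iii), the multiplicativity in Proposition~\ref{BasicPropertiesReduced}(ii), and uniqueness of the odd-length regular continued fraction expansion of a rational.

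First I would check $B\circ A$. Starting from an even-length sequence $(a_1,\ldots,a_n)$, Map $A$ produces the matrix with entries $a=K_2^{n-1}$, $b=K_1^{n-1}$, $c=K_2^n$, $d=K_1^n$. The quotient $b/a$ then equals $[a_1;a_2:\ldots:a_{n-1}]$, a regular continued fraction of odd length $n-1$; by uniqueness this is exactly the prefix that Map $B$ extracts. For the final entry, the continuant recurrence $d=a_nb+K_1^{n-2}$ combined with the bounds $1\le K_1^{n-2}\le K_1^{n-1}=b$ yields $\lfloor(d-1)/b\rfloor=a_n$. Hence Map $B$ recovers the original $(a_1,\ldots,a_n)$.

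For $A\circ B$, I would start from a reduced matrix with entries $a,b,c,d$ satisfying $d>b>a\ge 1$ and $ad-bc=1$. Let $(a_1,\ldots,a_{2n-1})$ be the odd-length regular CF of $b/a$, so $b=K_1^{2n-1}$ and $a=K_2^{2n-1}$, and set $a_{2n}=\lfloor(d-1)/b\rfloor$. By Proposition~\ref{BasicPropertiesReduced}(ii), Map $A$ applied to the resulting sequence gives $M_{a_1,\ldots,a_{2n-1}}\cdot M_{a_{2n}}$, whose first column is automatically $(a,b)^{\top}$. Matching the second column reduces, via one step of the continuant recurrence, to showing $K_1^{2n-2}=d-a_{2n}b$; once this is in hand, the $(1,2)$-entry matches $c$ automatically because the resulting matrix has determinant $1$ and hence must satisfy $aK_1^{2n}-bK_2^{2n}=1=ad-bc$.

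The main obstacle is exactly the identification $K_1^{2n-2}=d-a_{2n}b$, which is a modular rather than a direct recursive argument. Proposition~\ref{BasicPropertiesReduced}(iii) applied to $M_{a_1,\ldots,a_{2n-1}}$ gives $bK_2^{2n-2}-aK_1^{2n-2}=-1$, so $aK_1^{2n-2}\equiv 1\pmod b$; similarly $ad-bc=1$ yields $ad\equiv 1\pmod b$, whence $d\equiv K_1^{2n-2}\pmod b$. The definition of $a_{2n}$ forces $d-a_{2n}b\in[1,b]$, while the continuant bound gives $K_1^{2n-2}\in[1,b)$; the boundary value $b$ for $d-a_{2n}b$ is excluded by $\gcd(b,d)=1$, which is immediate from $ad-bc=1$. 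Thus both quantities lie in $[1,b-1]$ and represent the same residue mod $b$, so they coincide, completing the argument.
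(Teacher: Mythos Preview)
The paper states this proposition without proof (it ends with a bare $\qed$), so there is no argument in the paper to compare against; your proof correctly supplies the missing details. Both directions are sound: the recovery of $a_n$ via $\lfloor (d-1)/b\rfloor$ from the continuant recursion and the bound $1\le K_1^{n-2}\le b$ is exactly right, and in the harder direction the modular argument pinning down $K_1^{2n-2}=d-a_{2n}b$ via the determinant identity of Proposition~\ref{BasicPropertiesReduced}(iii) together with $\gcd(b,d)=1$ is the natural way to close the gap.

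One cosmetic point: the paper itself is inconsistent about whether the reduced condition is $b>a$ or $b\ge a$ (compare the stated domain of $B$ with the Remark that follows the proposition). The boundary case $a=b=1$, which arises from sequences $(1,a_2)$, slips past your strict inequality $K_1^{2n-2}<b$; but there both sides of the needed identity $K_1^{2n-2}=d-a_{2n}b$ equal $1$ trivially, so nothing is actually lost.
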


In fact the map $A$ can be extended analytically (i.e. with the same formula)
to a bijection between the set of all finite sequences and the set of all reduced operators.
This bijection delivers a complete invariant of $\SL(2,\z)$ conjugacy
classes of $\SL(2,\z)$ matrices.
A slightly modified version of this approach is known as
Gauss Reduction theory
(for more details see~\cite{Lewis1997,Manin2002,Katok2003,Karpenkov2010} and~\cite[Chapter~7]{oleg1}).

\vspace{2mm}

{\bf Map $C$:}
For a finite sequence $(a_1,\ldots,a_n)$ we set
$$
(a_1,\ldots,a_n)
\mapsto
f_{a_1,\ldots,a_n}(x,y)=
K_1^{n-1}x^2+(K_1^n-K_2^{n-1})xy-K_2^{n}y^2.
$$

\vspace{2mm}

{\bf Map $D$:}
We do not have a nice explicit form for this map.
The map is a composition $B\circ F$. (See map $F$ below).

\vspace{2mm}

{\bf Map $E$:} We have the following simple formula here:
$$
\left(
\begin{array}{cc}
a & c\\
b & d\\
\end{array}
\right)
\mapsto
bx^2+(d-a)xy-cy^2.
$$

\vspace{2mm}

{\bf Map $F$:}
Here we have
$$
Ax^2+Bxy+Cy^2
\mapsto
\left(
\begin{array}{cc}
a & c\\
b & d\\
\end{array}
\right)
$$
where
$$
a=\frac{-B+\sqrt{B^2-4AC+4}}{2},
\qquad
b=A,
\qquad
c=-C,
\qquad
d=a+B.
$$

\vspace{2mm}

{\bf Map $W$:}
For this map we have a nice expression in terms of
continuants:$$
LLS
\mapsto
\frac{\sqrt{(K_1^n+K_2^{n-1})^2-4}}{K_1^{n-1}}.
$$
This is equivalent to applying Map $\nu$ in the diagram of Figure~\ref{diagram.1} to the periodisation
of a sequence.

\vspace{2mm}

{\bf Map $X$:} This is a restriction of Map~$\mu$ above.
In addition we have the following useful formula:
$$
f \mapsto \frac{\sqrt{\Delta(f)}}{f(1,0)}.
$$

{\bf Map $Z$:} This map is defined as follows:
$$
\left(
\begin{array}{cc}
a & c\\
b & d\\
\end{array}
\right)
\mapsto
\frac{\sqrt{(a+d)^2-4}}{b}.
$$

\begin{remark}
Since all the maps corresponding to arrows with opposite directions of the diagram in Figure~\ref{diagram.2} are inverse to each other,
the diagram is commutative, namely it holds for single elements, see
for instance Figure~\ref{diagram.3} of Example~\ref{exxx}.
\end{remark}

Here and below to avoid ubiquity for a one element sequence $(a)$ we write
$(\underline{a})$.

\begin{example}\label{exxx}
Let us give several examples of various
periods of LLS sequences together with corresponding matrices, forms, and Markov elements.

	\begin{center}
		\begin{tabular}{ |c|c|c| }
			\hline
			Sequence & Matrix / Form &  Markov Element  \\ \hline
$(p,p)$ &
$
\begin{array}{c}
\left(\begin{matrix}
				1 & p \\
				p & p^2+1 \end{matrix}\right)\\
px^2-p^2xy-py^2
\end{array}
$
&
$
\displaystyle
\frac{2}{[\langle p\rangle]}
$
\\
\hline
$(2,2,3,3)$ &
$
\begin{array}{c}
\left(\begin{matrix}
			7 & 23 \\
			17 & 56 \end{matrix}\right)\\
17x^2+49xy-23y^2
\end{array}
$
&
$
\displaystyle
\frac{\sqrt{3965}}{17}
$
\\
\hline
$(2,2,3,3,3,3)$ &
$
\begin{array}{c}
\left(\begin{matrix}
			76 & 251 \\
			185 & 611 \end{matrix}\right)\\
185x^2+535xy-251y^2
\end{array}
$
&
$
\displaystyle
\frac{\sqrt{471965}}{185}
$
\\
\hline
$(2,2,2,2,3,3)$ &
$
\begin{array}{c}
\left(\begin{matrix}
			41 & 135 \\
			99  & 326 \end{matrix}\right)\\
99x^2+285xy-135y^2
\end{array}
$
&
$
\displaystyle
\frac{\sqrt{14965}}{33}
$
\\
\hline
$
\begin{array}{l}
(2,2,3,3,\\
\hphantom{(}2,2,3,3,3,3)
\end{array}
$
&
$
\begin{array}{c}
\left(\begin{matrix}
			4787 & 15810 \\
			11652 & 38483 \end{matrix}\right)\\
11652x^2+33696xy-15810y^2
\end{array}
$
&
$
\displaystyle
\frac{\sqrt{13002034}}{971}
$
\\ \hline
		\end{tabular}
	\end{center}

On the following diagram we consider a single case of the sequence $(2,2,3,3,3,3)$
and its corresponding images under the above maps (see Figure~\ref{diagram.3}).

\begin{figure}
$$\epsfbox{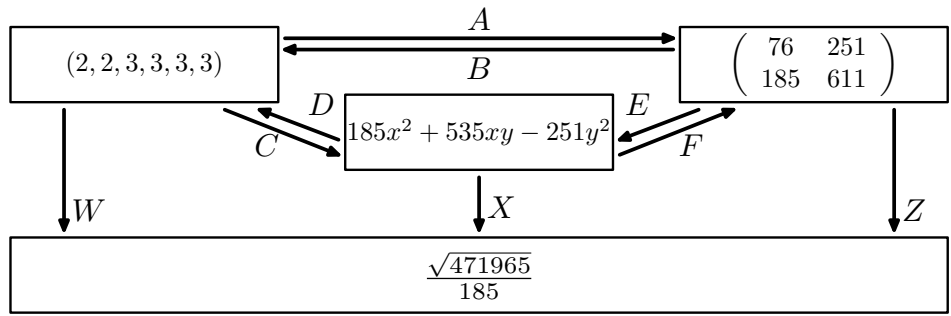}$$
		\caption{An example for $(2,2,3,3,3,3)$.}
		\label{diagram.3}
\end{figure}

\end{example}

Let us finally note that this section covers forms
for all possible periodic arrangements due to the following proposition.

\begin{proposition}
$($i$)$ After an appropriate $SL(2,\z)$ change of coordinates
any form corresponding to a periodic arrangement
is multiple to some extremal reduced quadratic form.

{\noindent
$($ii$)$
Let $A$ be an $\SL(2,\z)$ matrix.
Then either $A$ or $-A$ is conjugate to some extremal form.
}
\end{proposition}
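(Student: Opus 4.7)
The plan is to read the diagram of Figure~\ref{diagram.2} in reverse: starting from a form or matrix whose invariant data is periodic, extract its period, pass through Maps~$A$ and~$C$ to obtain an extremal reduced representative, and use the $\SL(2,\z)$-action (coordinate changes in $(i)$, conjugation in $(ii)$) both to reduce the arrangement and to perform the cyclic shift that is needed for extremality.

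For part $(i)$, let $f$ be a form whose zero locus $\mathcal A$ is a periodic arrangement. Since the LLS sequence is a complete $\SL(2,\z)$-invariant of generic integer arrangements, there exists $T\in\SL(2,\z)$ with $T(\mathcal A)=\mathcal A(p,q)$ reduced, and therefore $f\circ T^{-1}$ is a nonzero multiple of $f_{p,q}=(y-px)(y-qx)$. Let $(a_1,\ldots,a_n)$ be the period of the LLS sequence, doubled if necessary so that $n$ is even. By Proposition~\ref{BasicPropertiesReduced}(iv) the eigenlines of $M_{a_1,\ldots,a_n}$ are precisely $y=px$ and $y=qx$, so the associated form $f_{a_1,\ldots,a_n}$ annihilates the same two lines as $f_{p,q}$ and the two forms are proportional. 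To replace the period by an \emph{extremal} cyclic shift, observe that $M_{a_1}^{-1}M_{a_1,\ldots,a_n}M_{a_1}=M_{a_2,\ldots,a_n,a_1}$, so cyclic rotation of the period is implemented by $\SL(2,\z)$-conjugation; geometrically this slides the marked vertex $(1,0)$ to the next vertex of the sail. The Perron Identity locates the Markov minimum at some sail vertex, and selecting the cyclic shift that puts this vertex at $(1,0)$ yields extremality. Absorbing the conjugation into $T$ gives the required global $\SL(2,\z)$-change of coordinates.

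For part $(ii)$, I would handle the non-hyperbolic cases ($|\trace(A)|\le 2$) separately and then assume $|\trace(A)|>2$. After replacing $A$ by $-A$ if needed, $\trace(A)>2$ and $A$ is hyperbolic with two distinct real invariant lines through the origin; these lines form a generic integer arrangement whose LLS sequence is periodic because $A$ acts as a nontrivial shift of the sail. Applying part $(i)$ yields $T\in\SL(2,\z)$ bringing this arrangement to some $\mathcal A(p,q)$ with period $(a_1,\ldots,a_n)$. The conjugate $TAT^{-1}$ then lies in the stabilizer of $\mathcal A(p,q)$ inside $\SL(2,\z)$; this stabilizer is infinite cyclic, generated by $M_{a_1,\ldots,a_n}$, so $TAT^{-1}$ equals a power of $M_{a_1,\ldots,a_n}$. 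Enlarging the period by that power and applying a further cyclic-shift conjugation as in $(i)$ produces the desired extremal matrix.

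The principal obstacle is the step in part $(i)$ that identifies the vertex attaining the Markov minimum with a cyclic shift of the period. One needs to know that the Perron infimum is actually attained, which is immediate from periodicity since the summands $a_i+[0;a_{i+1}:\ldots]+[0;a_{i-1}:\ldots]$ take only finitely many values in $i$, and, more delicately, that the minimising lattice point lies on the sail rather than at some interior point of the cone. Granted this, Proposition~\ref{BasicPropertiesReduced}(v) together with the dictionary between cyclic rotation of the period and $\SL(2,\z)$-conjugation of $M_{a_1,\ldots,a_n}$ makes the translation of that vertex to $(1,0)$ automatic.
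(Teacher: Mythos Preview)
Your treatment of part $(i)$ coincides with the paper's: extract the periodic LLS sequence via $\delta\circ\alpha$, pick an even period, and feed it to Map~$C$. The paper's proof is terser and does not spell out the cyclic-shift step that you correctly identify as necessary for \emph{extremality}; your observation that cyclic rotation of the period is realised by conjugation (hence, on forms, by an $\SL(2,\z)$ coordinate change) is exactly the missing line. One small point: a single shift $M_{a_1}^{-1}(\cdot)M_{a_1}$ has determinant $-1$, so to stay inside $\SL(2,\z)$ you should shift by two LLS positions at a time, which is harmless since the Markov minimum sits at a \emph{vertex} of the sail and vertices correspond to even positions in the LLS indexing.

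For part $(ii)$ the paper does not go through $(i)$ at all; it simply invokes Gauss Reduction theory in dimension~2 as a black box. Your route through $(i)$ is essentially a reconstruction of that theory from the arrangement side, and it is fine up to one genuine gap: once you have $TAT^{-1}$ in the stabiliser of $\mathcal A(p,q)$, that stabiliser is $\{\pm M_{a_1,\ldots,a_n}^k:k\in\z\}$, and after matching signs you only know $TAT^{-1}=M_{a_1,\ldots,a_n}^k$ for some \emph{nonzero} $k$. Your phrase ``enlarging the period by that power'' handles $k>0$ but not $k<0$, since $M_{a_1,\ldots,a_n}^{-1}$ is not reduced and is not in general $\SL(2,\z)$-conjugate to $M_{a_1,\ldots,a_n}$. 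The fix is to choose the \emph{orientation} of the marked LLS sequence so that $A$ shifts the sail in the positive direction (equivalently, redo $(i)$ with the reversed period), which forces $k>0$; this is precisely the content of Gauss Reduction that the paper is quoting. Finally, the non-hyperbolic cases you set aside cannot actually be ``handled'': every reduced matrix $M_{a_1,\ldots,a_{2n}}$ with $a_i\ge 1$ has trace $\ge 3$, so parabolic and elliptic $A$ are never conjugate to an extremal matrix, and the statement should be read with an implicit hyperbolicity hypothesis.
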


\begin{proof}
{\it Item $($i$)$}.
Any form with a periodic arrangement determines the periodic LLS sequence $\alpha$
obtained by the compositions of Map~$\delta$ and Map~$\alpha$ of Figure~\ref{diagram.1}.
Take an even period of this LLS sequence and apply to it the map~$C$.
We get an extremal reduced quadratic form multiple to the original one.

{\it Item $($ii$)$}.
The second statement is equivalent to Gauss Reduction theory in dimension 2 (see~\cite[Chapter~7]{oleg1}).
\end{proof}

\section{Triple-graphs: definitions and examples}
\label{Triple-graphs: definitions and examples}

In this section we introduce a supplementary abstract structure of triple-graphs which is suitable for the study of generalised Markov theory.
We start in Subsection~\ref{Definition of triple-graphs} with general notions and definitions.
In Subsection~\ref{G-EX1} we define Farey triple-graph structure, which
leads to the definition of a natural Farey coordinate for
arbitrary Farey graphs.
Further in Subsections~\ref{exMarkov1}, \ref{exLLS},
and~\ref{exMarkov2}  we introduce a natural tree-graph structure on finite Markov numbers, integer sequences and $\SL(2,\z)$-matrices respectively. For Markov numbers and their related matrices, in the classical case the triple-graphs are similar to the ones studied in the book~\cite{Aigner1989} by M.~Aigner.
In Subsection~\ref{TreeStruct} we discuss conditions for the above triple-graphs to have a tree structure.
Finally we discuss how to reconstruct triples of sequences in triple-graphs
by their central elements in Subsection~\ref{TreeAlg}.

\subsection{Definition of triple-graphs}
\label{Definition of triple-graphs}

In this subsection we introduce a general triple-graph structure.
Let $S$ be an arbitrary set, and
$$
\sigma:S^3 \to S
$$
be a ternary operation on it, where $S^3=S\times S \times S$.
Set
$$
\begin{array}{l}
L_\sigma(a,b,c)=\big(a,\sigma(a,b,c),b\big),\\
R_\sigma(a,b,c)=\big(b,\sigma(b,c,a),c\big).\\
\end{array}
$$

\begin{definition}
Let $S$ be an arbitrary set, and let $\sigma$ be a ternary operation on it.
Denote by $\G(S,\sigma)$ the directed graph whose vertices are elements of $S^3$.
The vertices $v,w\in S^3$ are connected by an edge $(v,w)$ if either
$$
w=L_\sigma(v), \qquad \hbox{or} \qquad w=R_\sigma(v).
$$
\end{definition}

\begin{definition}
For a vertex $v$ in $\G(S,\sigma)$
denote by $\G_v(S,\sigma)$ the connected component of $\G(S,\sigma)$ that contains $v$.
\end{definition}

\begin{definition}
Any element $w\in\G_v(S,\sigma)$ can be written as
\begin{equation}\label{FarCod}
w=R^{a_{2n}}\circ L^{a_{2n-1}}\circ\ldots \circ R^{a_2}\circ
L^{a_1}(v),
\end{equation}
where $a_2,\ldots, a_{2n-1}$ are positive integers and $a_1$, $a_{2n}$ are nonnegative integers.
We say that
$$
(a_1,a_2,\ldots,a_{2n})
$$
is a {\it Farey code} of $w$ in $G_v(S,\sigma)$, denote it by $F(w)$.
\\
We say that the continued fraction
$$
[0;a_0+1:a_1:\ldots:a_{2n}+1]
$$
is a {\it Farey coordinate} of $w$, denote it by $w_F$.
\end{definition}

\begin{definition}
We say that a graph $\G_v(S,\sigma)$ is {\it free generated} if
every $w$ in $G_v(S,\sigma)$ has a unique Farey coordinate
(or in other words, the representation in Expression~(\ref{FarCod}) is unique
for every $w$ in $G_v(S,\sigma)$).
\end{definition}

In case if $\G_v(S,\sigma)$ is free generated,
the graph $\G_v(S,\sigma)$ is a binary rooted tree.
Every element has a unique Farey code, and a unique Farey coordinate.
Farey coordinates cover all rational numbers of the open interval $(0,1)$.

\begin{definition}
Let $\G_v(S_1,\sigma_1)$ and $\G_v(S_2,\sigma_2)$  be two triple-graphs.
We say that an isomorphism of $S_1$ and $S_2$
is an {\it interior isomorphism} of triple-graphs.
\end{definition}


\subsection{Farey triples, Farey coordinates for triple-graph structure}
\label{G-EX1}
We start with a simple example of a triple-graph structure generated by Farey triples.
It is related to triangular chambers in Farey tessellation in hyperbolic geometry.

\vspace{2mm}

First of all we define Farey summation.
\begin{definition}
Let $\frac{p}{q}$ and $\frac{r}{s}$ be two rational numbers with $q,s>0$ and such that $\gcd(p,q)=\gcd(r,s)=1$.
Then
$$
\frac{p}{q}\hat\oplus \frac{r}{s}=
\frac{p+r}{q+s}.
$$
\end{definition}

For a triple of rational numbers we set
$$
\hat\oplus(r_1,r_2,r_3)=r_1\hat\oplus \, r_2.
$$

We have a triple-graph $\G_{(0/1,1/2,1)}(\q,\hat\oplus)$.
It turns out that this graph is free generated.

\begin{proposition}
Let $v$ be a triple of $\G_{(0/1,1/2,1)}(\q,\hat\oplus)$. Then its middle
element coincides with its Farey coordinate $v_F$.
\qed
\end{proposition}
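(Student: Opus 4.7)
The plan is to encode each triple by its two outer rationals arranged as a $2\times 2$ matrix, show that $L$ and $R$ act by right multiplication with the standard unipotent generators of $\SL(2,\z)$, and then read off the middle element as the M\"obius evaluation of that matrix at $x=1$. The continued fraction shape of the Farey coordinate will then drop out of this evaluation automatically.

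First I would show by induction on the length of the Farey code that every reachable triple, written in lowest terms as $(p_1/q_1, p_2/q_2, p_3/q_3)$, satisfies $p_2/q_2=(p_1+p_3)/(q_1+q_3)$ and $p_1 q_3 - p_3 q_1 = -1$, so that the outer pair $(p_1/q_1, p_3/q_3)$ already encodes the whole triple. To such a triple I associate the matrix $M = \bigl(\begin{smallmatrix} p_1 & p_3 \\ q_1 & q_3\end{smallmatrix}\bigr)$. The root $v$ corresponds to $M_v = \bigl(\begin{smallmatrix} 0 & 1 \\ 1 & 1\end{smallmatrix}\bigr)$, and unwinding $L(r_1,r_2,r_3) = (r_1, r_1\hat\oplus r_2, r_2)$ and $R(r_1,r_2,r_3) = (r_2, r_2\hat\oplus r_3, r_3)$ shows that $L$ and $R$ act on $M$ by right multiplication with $L_M = \bigl(\begin{smallmatrix} 1 & 1 \\ 0 & 1\end{smallmatrix}\bigr)$ and $R_M = \bigl(\begin{smallmatrix} 1 & 0 \\ 1 & 1\end{smallmatrix}\bigr)$, respectively.

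For $w = R^{a_{2n}}\circ\cdots\circ L^{a_1}(v)$ this gives
\[
M_w = M_v\cdot L_M^{a_1}R_M^{a_2}L_M^{a_3}\cdots L_M^{a_{2n-1}}R_M^{a_{2n}},
\]
and the middle element of $w$ equals the M\"obius image $M_w(1)$. Using $L_M(x)=x+1$ and $R_M(x)=x/(x+1)$ and applying these matrices to $x=1$ from right to left, the blocks $R_M^{a_{2k}}$ and $L_M^{a_{2k-1}}$ alternately contribute layers $1/(a_{2k}+\,\cdot)$ and $a_{2k-1}+\,\cdot$, which unroll to the continued fraction $[a_1;a_2:\ldots:a_{2n-1}:a_{2n}+1]$. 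A final application of $M_v$, whose M\"obius action is $y\mapsto 1/(y+1)$, turns this into $[0;a_1+1:a_2:\ldots:a_{2n-1}:a_{2n}+1]$, which is $w_F$ by definition.

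The only bookkeeping hurdle is the treatment of the endpoint exponents $a_1$ and $a_{2n}$, which may vanish. The shift $a_{2n}\mapsto a_{2n}+1$ in the final continued fraction comes from the initial evaluation at $x=1$ rather than $x=0$, while the shift $a_1\mapsto a_1+1$ comes from the extra $+1$ in the M\"obius action $y\mapsto 1/(y+1)$ of $M_v$. Once this is accounted for, the argument reduces to the classical correspondence between words in the two unipotent generators of $\SL(2,\z)$ and regular continued fraction expansions.
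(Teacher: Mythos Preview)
Your argument is correct. The paper states this proposition without proof (note the bare $\qed$), deferring to the classical Farey--Stern--Brocot theory; your matrix encoding of the outer pair and the identification of $L,R$ with right multiplication by the unipotent generators $\bigl(\begin{smallmatrix}1&1\\0&1\end{smallmatrix}\bigr)$ and $\bigl(\begin{smallmatrix}1&0\\1&1\end{smallmatrix}\bigr)$ is exactly the standard way to supply those details. The computation $M_w(1)=[0;a_1{+}1:a_2:\ldots:a_{2n}{+}1]$ is carried out correctly, including the handling of the possibly vanishing endpoint exponents $a_1$ and $a_{2n}$, and the induction showing that the outer pair always forms a unimodular matrix (determinant $-1$) guarantees that all intermediate fractions remain in lowest terms, so that the Farey sum behaves as claimed. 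In short, you have written out what the paper leaves implicit.
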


In fact triples here are precisely the rational numbers in the vertices of triangles in Farey Tessellation
(for further information we refer to~\cite[Section 23.2]{oleg1}).

\subsection{Markov triple-graph}
\label{exMarkov1}

Consider the following ternary operation on the set of integers:
$$
\Sigma(a,b,c)=3ac-b.
$$
We call this operation {\it Markov multiplication} of triples.

\vspace{2mm}

By construction we have the following statement.

\begin{theorem}
The set of Markov triples coincides with the set of all triples obtained by permutation of
elements in the vertices of $\G_{(1,1,1)}(\z,\M)$.
\qed
\end{theorem}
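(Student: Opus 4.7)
The plan is to prove both containments by induction, drawing on Facts~1, 2, and~3 recalled in the introduction.

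For the containment $\supseteq$ (every permutation of a vertex of $\G_{(1,1,1)}(\z,\Sigma)$ is a Markov triple), I would induct on the graph distance from the root. The base case is immediate: $1^2+1^2+1^2=3\cdot 1\cdot 1\cdot 1$. For the inductive step, if $(a,b,c)$ is a vertex that is already a Markov triple, a direct computation shows that both $L_\Sigma(a,b,c)$ and $R_\Sigma(a,b,c)$ agree, up to a single transposition of two coordinates, with the Vieta substitution of Fact~2 applied to $(a,b,c)$. Fact~1 then yields that $L_\Sigma(a,b,c)$ and $R_\Sigma(a,b,c)$ are themselves Markov triples, and a second application of Fact~1 extends the conclusion to every permutation of every vertex.

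For the containment $\subseteq$ (every positive Markov triple is a permutation of some vertex), I would induct on the level in the classical Markov tree, which by Fact~3 enumerates every positive Markov triple. The base case is trivial. For the inductive step, suppose $m$ is a classical Markov triple at level $n+1$, obtained from a level-$n$ triple $p$ by replacing one entry with its Vieta partner. The inductive hypothesis supplies a depth-$n$ vertex of the graph whose unordered entries are those of $p$; the extra content required is that the graph already contains a depth-$n$ permutation $v$ of $p$ in which the entry of $p$ targeted for replacement occupies an extreme coordinate rather than the middle one. Then exactly one of $L_\Sigma(v)$ or $R_\Sigma(v)$ is a permutation of $m$, closing the induction.

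The main obstacle is establishing this last observation: one must check, in tandem with the induction, that $\G_{(1,1,1)}(\z,\Sigma)$ supplies the ``right'' permutations of each underlying Markov triple at each depth. This is routine combinatorial bookkeeping, but it is the one step where the explicit shape of $L_\Sigma$ and $R_\Sigma$ enters the argument; once it is in hand, the theorem is an unpacking of Facts~1--3 in the language of triple-graphs.
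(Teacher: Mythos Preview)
The paper offers no proof: it writes ``By construction we have the following statement'' and places the $\qed$ immediately after the theorem. Your argument supplies exactly the unpacking the paper omits. Both containments really do reduce to Facts~1--3 together with the explicit shape of $L_\Sigma$ and $R_\Sigma$, and the bookkeeping you flag is indeed routine once one observes that after the first two levels the freshly created (maximal) entry always occupies the middle slot, so the two Vieta neighbours required by the classical Markov tree are precisely the ones reachable via $L$ or $R$.

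One caveat worth recording: the paper defines $\Sigma(a,b,c)=3ac-b$, but this must be a typo. With that formula one has $L_\Sigma(1,2,1)=(1,1,2)$ and then $L_\Sigma(1,1,2)=(1,5,1)$, and $(1,5,1)$ is not a Markov triple, so the theorem would fail outright. The surrounding text --- the Fact~2 map $(a,b,c)\mapsto(a,3ab-c,b)$, the sequence operation $\oplus(a,b,c)=a\oplus b$, and the later remark deriving $\tilde\Sigma=3a\cdot b-c$ in the classical case --- all indicate that $\Sigma(a,b,c)=3ab-c$ is intended. Your ``direct computation'' that $L_\Sigma$ and $R_\Sigma$ are Vieta substitutions up to a single transposition is correct under that reading (indeed $L_\Sigma$ then coincides literally with Fact~2), and only under that reading.
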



Further we mostly consider a free generated part of it:  $\G_{(1,5,2)}(\z,\Sigma)$.
Note that the complement to the set of vertices of $\G_{(1,1,1)}(\z,\M)$
for the set of vertices of $\G_{(1,5,2)}(\z,\M)$ is
$$
\big\{(1,1,1),(1,2,1)\big\}.
$$
By Theorem~3.3 in~\cite{Aigner1989}, the triple-graph $\G_{(1,5,2)}(\z,\Sigma)$ is free generated.

\subsection{Triple-graphs of finite sequences}
\label{exLLS}
Let $\z^\infty$ be the set of finite sequences with integer elements.
Consider a binary operation $\oplus$ on $\z^\infty$ defined as
$$
(a_1,\ldots,a_n) \oplus (b_1,\ldots b_m)=
(a_1,\ldots,a_n,b_1,\ldots b_m).
$$
Finally for $a,b,c\in \z^{\infty}$ set
$$
\oplus(a,b,c)=a\oplus b.
$$
Consider the triple-graph
$\z^{\infty}(a,b)= \G_{(a,a\oplus b,b)}(\z^\infty,\oplus)$.
This graph is free generated if and only if
$a$ and $b$ are not multiple to the same sequence $c$.
Here we say that $p$ is multiple to $q$ if there exists an integer $n$ such that
$$
p=\oplus_{i=1}^n q.
$$

\subsection{Triple-graphs of matrices}
\label{exMarkov2}

Let $A$, $B$, and $C$ be $\SL(2,\z)$ matrices.
Set
$$
\bullet(A,B,C)=AB.
$$

Now one can consider a triple-graph of matrices
$$
\G_{(A,AB,B)}(\SL(2,\z),\bullet).
$$

\begin{definition}
Let $M$ and $N$ be two $\SL(2,\z)$ matrices.
Denote the triple graph $\G_{(M,MN,N)}(\SL(2,\z),\bullet)$
by $\G_\bullet(M,N)$.
\end{definition}

\subsection{Tree structure of $\G_\oplus(\mu,\nu)$ and $\G_\bullet\big(M_\mu,M_\nu\big)$}
\label{TreeStruct}

In this subsection we investigate when triple-graphs $\G_\oplus(\mu,\nu)$ and $\G_\bullet\big(M_\mu,M_\nu\big)$ has a tree structure.

\subsubsection{A skew-lexicographical order}
Let us introduce a standard order on the set of all finite and infinite sequences.
\begin{definition}
Let $\alpha=(p_i)_{i=1}^{\infty}$ and $\beta=(q_i)_{i=1}^{\infty}$
be two infinite sequences of real numbers.
We write $\alpha\succ\beta$ if there exists $n$ such that
\[
\begin{cases}
p_i=q_i, & i=0,\ldots, n-1;\\
p_{n}>q_{n}, & \hbox{if $n$ is odd};\\
p_{n}<q_{n}, & \hbox{if $n$ is even}.
\end{cases}
\]

If $\alpha$ and $\beta$ coincide, then we write $\alpha=\beta$.
In all the other cases we write $\alpha\prec\beta$.

Such ordering is called {\it skew-lexicographic}.
\end{definition}

Recall that for a finite or infinite sequence of positive integers $\alpha=(a_1,a_2,\ldots)$
we denote by $[\alpha]$ the following real number
$$
[\alpha]=[a_1:a_2;\ldots].
$$

From the general theory of regular continued fractions we have the following statement.
\begin{proposition}\label{LastLabel169}
Let $\mu$ and $\nu$ be two infinite sequences of positive integers.
Then $\mu\succ \nu$ if and only if
$
[\mu] > [\nu].
$
\qed
\end{proposition}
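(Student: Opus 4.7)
The plan is to locate the first index $n$ at which the two sequences disagree and then translate the skew-lexicographic comparison into a monotonicity statement for a single Möbius transformation.

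If $\mu=\nu$ both sides of the equivalence fail, so I may assume $\mu\ne\nu$ and let $n$ be the smallest index with $a_n\ne b_n$, where $\mu=(a_1,a_2,\ldots)$ and $\nu=(b_1,b_2,\ldots)$. Set $z_\mu=[a_n;a_{n+1}:a_{n+2}:\ldots]$ and $z_\nu=[b_n;b_{n+1}:b_{n+2}:\ldots]$. Using the common prefix $(a_1,\ldots,a_{n-1})=(b_1,\ldots,b_{n-1})$, the standard convergent formula yields
\[
[\mu]=f(z_\mu), \qquad [\nu]=f(z_\nu), \qquad f(z)=\frac{z\,p_{n-1}+p_{n-2}}{z\,q_{n-1}+q_{n-2}},
\]
where $p_k/q_k$ is the $k$-th convergent of the common prefix $[a_1;a_2:\ldots:a_{n-1}]$. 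In this way the problem is reduced to comparing $f(z_\mu)$ with $f(z_\nu)$, and both fractions contain the same coefficients $p_{n-1},p_{n-2},q_{n-1},q_{n-2}$.

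The derivative of $f$ equals $(p_{n-1}q_{n-2}-p_{n-2}q_{n-1})/(z q_{n-1}+q_{n-2})^2$, whose numerator is $\pm 1$ with sign $(-1)^n$; this convergent identity is a direct consequence of $\det M_{a_1,\ldots,a_{n-1}}=(-1)^{n-1}$ from Proposition~\ref{BasicPropertiesReduced}(iii). Consequently $f$ is strictly increasing when $n$ is odd and strictly decreasing when $n$ is even. Next I would check that $a_n>b_n$ implies $z_\mu>z_\nu$: since every tail of a regular continued fraction with positive integer entries satisfies $a_n\le z_\mu<a_n+1$, the inequality $z_\mu\ge a_n\ge b_n+1>z_\nu$ is immediate, and the case $a_n<b_n$ is symmetric.

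Combining these two ingredients finishes the argument. If $n$ is odd, $\mu\succ\nu$ means $a_n>b_n$, hence $z_\mu>z_\nu$, whence $[\mu]>[\nu]$ via the increasing $f$; if $n$ is even, $\mu\succ\nu$ means $a_n<b_n$, hence $z_\mu<z_\nu$, and again $[\mu]>[\nu]$ through the decreasing $f$. The converse implications follow automatically from the trichotomy of $\succ$, $=$, $\prec$. The main obstacle is the parity bookkeeping: matching the sign of $\det M_{a_1,\ldots,a_{n-1}}$ to the direction of monotonicity of $f$ is a routine but easy-to-slip verification, and I would settle it first so that the rest of the argument goes through cleanly.
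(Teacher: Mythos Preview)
Your argument is correct and is the standard proof of this well-known fact. The paper itself does not supply a proof at all: the proposition is stated with a \qed, with the comment that it follows ``from the general theory of regular continued fractions.'' So there is nothing to compare at the level of strategy.

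One small slip worth flagging, exactly of the kind you anticipated in your final paragraph: with the identification $p_k=K_1^k$ and $q_k=K_2^k$, the determinant computation from Proposition~\ref{BasicPropertiesReduced}(iii) gives
\[
p_{n-1}q_{n-2}-p_{n-2}q_{n-1}=K_1^{n-1}K_2^{n-2}-K_1^{n-2}K_2^{n-1}=\det M_{a_1,\ldots,a_{n-1}}=(-1)^{n-1},
\]
not $(-1)^n$. Your subsequent monotonicity statement (``increasing when $n$ is odd, decreasing when $n$ is even'') is nonetheless correct, since $(-1)^{n-1}>0$ precisely for odd $n$; only the intermediate sign label is off by one. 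With that corrected the argument goes through without change.
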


\subsubsection{Evenly-composite sequences}
In what follows we use the following general definition.

\begin{definition}
Let $\alpha$ be a finite even sequence.
We say that $\alpha$ is {\it evenly-composite} if there exists an even sequence $\beta$
such that
$$
\alpha=\beta\oplus\beta\oplus\ldots\oplus\beta.
$$
Otherwise we say that $\alpha$ is {\it evenly-prime}.
\end{definition}

\subsubsection{A skew-lexicographical order for concatenation of even sequences}

Now we prove a rather important proposition on skew-lexi\-co\-graphic order for concatenations.

\begin{proposition}\label{proposition1-lex}
Let $\alpha$ and $\beta$ be evenly-prime sequences of integers.
Assume that $\langle\alpha\rangle \prec\langle\beta\rangle$ then we have
$$
\begin{array}{l}
\hbox{$($i$)$\hphantom{i}}\qquad
\langle\alpha\rangle \prec\langle\alpha\oplus \beta\rangle \prec \langle \beta \rangle;
\\
\hbox{$($ii$)$}\qquad
\langle\alpha\rangle \prec\langle\beta\oplus\alpha\rangle \prec \langle \beta \rangle.
\end{array}
$$
\end{proposition}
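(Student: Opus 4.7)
My plan is to reduce the two skew-lexicographic inequalities to strict inequalities between real numbers via Proposition~\ref{LastLabel169}, and then to exploit the fact that periodic continued fractions are attracting fixed points of the M\"obius transformations associated with the period. Set $a=[\langle\alpha\rangle]$, $b=[\langle\beta\rangle]$, $c=[\langle\alpha\oplus\beta\rangle]$, and $d=[\langle\beta\oplus\alpha\rangle]$. By Proposition~\ref{LastLabel169} the hypothesis $\langle\alpha\rangle\prec\langle\beta\rangle$ is equivalent to $a<b$, and the two conclusions reduce to showing $a<c<b$ and $a<d<b$ respectively.

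For a finite positive-integer sequence $\mu$ of even length, define the M\"obius map $\phi_\mu(x)=[\mu;x]$; the identity $[\mu\oplus\nu;x]=[\mu;[\nu;x]]$ yields $\phi_{\mu\oplus\nu}=\phi_\mu\circ\phi_\nu$. Since $|\mu|$ is even, the $2\times 2$ integer matrix representing $\phi_\mu$ has determinant $+1$ (it is a product of an even number of factors of determinant $-1$), so $\phi_\mu$ is strictly increasing on $(0,\infty)$. Writing out the fixed-point quadratic shows that the product of its roots is negative, so $\phi_\mu$ has exactly one positive fixed point; this point equals $[\langle\mu\rangle]$ and is attracting (every orbit of $\phi_\mu$ in $(0,\infty)$ converges to it, which is precisely what the periodic continued fraction computes). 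In particular $\phi_\mu(x)>x$ for $x$ strictly below this fixed point and $\phi_\mu(x)<x$ for $x$ strictly above.

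For (i), set $\psi=\phi_\alpha\circ\phi_\beta$; then $\psi$ is increasing and its unique positive fixed point is $c$. Since $a<b$, the dichotomy above for $\phi_\beta$ gives $\phi_\beta(a)>a$, and monotonicity of $\phi_\alpha$ together with $\phi_\alpha(a)=a$ then yields $\psi(a)>\phi_\alpha(a)=a$. Symmetrically $\psi(b)=\phi_\alpha(b)<b$. Monotonicity of $\psi$ combined with uniqueness of its positive fixed point forces $c\in(a,b)$. The same argument applied to $\phi_\beta\circ\phi_\alpha$ yields $d\in(a,b)$, proving (ii).

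The step I expect to require the most care is the analysis of $\phi_\mu$: monotonicity, existence of a unique positive fixed point equal to $[\langle\mu\rangle]$, and its attracting nature. These are essentially Gauss reduction theory for hyperbolic $\SL(2,\z)$-matrices and can be extracted from Proposition~\ref{BasicPropertiesReduced}(iv) (applied to the matrix underlying $\phi_\mu$, which by Remark~\ref{GeneralizedMatrices} is obtained from $M_\mu$ by swapping the $x$- and $y$-coordinates). The evenly-prime hypothesis itself plays no active role in my argument; it is absorbed into $\langle\alpha\rangle\prec\langle\beta\rangle$, which already guarantees $a\neq b$ and hence the strictness of all resulting inequalities.
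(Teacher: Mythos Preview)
Your proof is correct and takes a genuinely different route from the paper's. The paper proceeds by a combinatorial case analysis on how $\beta$ overlaps with powers of $\alpha$: it splits into the cases $\beta=\alpha^n\hat\alpha$, $\beta=\alpha^n\hat\alpha\gamma$ (with the first discrepancy at the start of $\gamma$), and the mirror situation $\alpha=\beta^m\hat\beta$, and in each case compares explicit finite truncations of the periodisations term by term. The evenly-prime hypothesis is used exactly once in that argument, to exclude equality in the borderline case where one period is an initial segment of a power of the other.

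Your approach replaces all of this with a clean dynamical observation: for an even sequence $\mu$ the map $\phi_\mu(x)=[\mu;x]$ is an increasing M\"obius transformation of $(0,\infty)$ whose unique positive fixed point is $[\langle\mu\rangle]$, and the sign of $\phi_\mu(x)-x$ is determined by which side of that fixed point $x$ lies on. Then $\psi=\phi_\alpha\circ\phi_\beta$ satisfies $\psi(a)>a$ and $\psi(b)<b$, which traps its fixed point $c$ strictly between $a$ and $b$; the companion map $\phi_\beta\circ\phi_\alpha$ handles $(ii)$. This is shorter, avoids cases entirely, and your closing remark is accurate: primality plays no role in your argument, which uses only that $\alpha$ and $\beta$ have even length (for determinant $+1$ and hence monotonicity) and that $a\neq b$. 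In that sense you prove a slightly stronger statement than the paper does, since the paper's case analysis genuinely needs primality to force strictness.
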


\begin{remark}
Note that the skew lexicographic order for periodisation of infinite sequences might be different
from the skew-lexicographic order (naturally extended to finite sequences) for the sequences themselves. For instance,
in the case of $\alpha=(1,1,2,2,1,1)$ and $\beta=(1,1,2,2)$, then $\beta$ follows $\alpha$,
but $\langle \beta\rangle \prec \langle \alpha\rangle$.
\end{remark}

\begin{proof}[Proof of Proposition~\ref{proposition1-lex}]
{\it Item~$($i.2$).$}
First of all we prove that $\langle\alpha\oplus \beta\rangle \prec \langle \beta \rangle$.

The proof is split into the following three cases.

{\noindent
{\it Case 1.}
Let $\beta=\alpha^n\hat\alpha$, where $n\ge 1$ and $\hat \alpha$ is an even
sequence such that $\alpha=\hat\alpha\oplus \alpha_0$ for some non-empty sequence $\alpha_0$.
}

By assumption we have
$$
[\langle\alpha\rangle]<[\langle\alpha^n\hat \alpha\rangle].
$$
Hence by cancelling the first $n$ copies of $\alpha$ (and since $\alpha$ consists of an even
number of elements) we have:
$$
[\langle\alpha\rangle]<[\langle\hat \alpha\alpha^n\rangle].
$$
Let us now compare the first several elements from both sides. We have
$$
[\alpha\hat\alpha] =
[\hat\alpha\alpha_0\hat\alpha] \le
[\hat\alpha\hat\alpha\alpha_0]=
[\hat\alpha\alpha].
$$
Here the middle inequality is equivalent to (since $\hat \alpha$ is even)
$$
[\alpha_0\hat\alpha] \le
[\hat\alpha\alpha_0].
$$
Now if equality holds then $\alpha$ has a nontrivial even isomorphism,
and therefore it is not evenly-prime. Hence we have a strict inequality:
$$
[\alpha\hat\alpha]
<
[\hat\alpha\alpha].
$$

This implies that
$$
[\alpha^{n+1}\hat\alpha]
<
[\alpha^{n}\hat\alpha\alpha].
$$
These are even partial fractions for
$[\alpha\beta]$ and $[\beta]$ respectively.
Hence
$$
[\alpha\beta]<[\beta].
$$
We have completed the proof for Case~1.

\vspace{2mm}

{\noindent
{\it Case 2.}
Now let $\beta=\alpha^n\hat\alpha\gamma$,
where $\alpha<\hat\alpha\gamma$, and difference is assumed at the very first element of $\gamma$. Here $n\ge 0$, and $\hat \alpha$ is as in Case~1.
}

By assumption we have $[\langle\alpha\rangle]<[\langle\beta\rangle]$ and hence
$$
[\langle\alpha\rangle]<[\langle\alpha^n\hat\alpha\gamma\rangle]
$$
Hence
$$
[\alpha\beta]=
[\alpha^{n+1}\hat\alpha\gamma]=
[\alpha^{n}\alpha\hat\alpha\gamma]<
[\alpha^{n}\hat\alpha\gamma]
=[\beta],
$$
notice that here we manipulate finite continued fractions. The inequality holds by the assumption.

The last inequality implies
$$
[\langle\alpha\beta\rangle]<[\langle\beta\rangle],
$$
as this is true at the element with which $\gamma$ starts inside $\beta$.

\vspace{2mm}

{\noindent
{\it Case 3.}
Finally it remains to show the case $\beta=\hat \alpha\ne \alpha$ ($n=0$ of Case~1).
We rewrite it as
$\alpha=\beta^m\hat\beta$, where $m>0$ and $\hat \beta$ is a beginning of $\beta$
with an even number of elements.
}

By assumption we have
$$
[\langle\alpha\rangle]=
[\langle\beta^n\hat\beta\rangle]<
[\langle\beta\rangle].
$$
Therefore, after cancelling $\beta^n$ at the beginning (since $\beta$ has an even number of elements)
we have
$$
[\langle\hat\beta\beta^n\rangle]<
[\langle\beta\rangle].
$$
In particular
$$
[\hat \beta\beta]
\le
[\beta\hat \beta].
$$
As in Case~1 (since $\beta$ is evenly prime) the last inequality is strict.
Therefore, we have
$$
[\langle\alpha\beta\rangle]=
[\langle\beta^n\hat\beta\beta\rangle]<
[\langle\beta\rangle].
$$

This concludes the proof for
$
[\langle\alpha\beta\rangle]< [\langle\beta\rangle].
$

\vspace{2mm}

{\noindent
{\it Item~$($i.1$).$}
The case $\langle\alpha\rangle\prec\langle\alpha\beta\rangle$ after cancelation of one $\alpha$
is equivalent to
$$
\langle\alpha\rangle\prec\langle\beta\alpha\rangle.
$$
Let us rewrite it as
$$
\langle\beta\alpha\rangle\succ\langle\alpha\rangle.
$$
Now the proof repeats the proof given above with all symbols '$<$' and '$\le$'
changed to '$>$' and '$\ge$' in all inequalities respectively.
}

\vspace{2mm}

{
\noindent
{\it Item~$($ii$).$}
Finally the proof for Item~{\it $($ii$)$} repeats the proof for
Item~{\it $($i$)$}  (for both statements)
with all symbols '$<$' and '$\le$'
changed to '$>$' and '$\ge$' in all inequalities respectively
and vice versa.
}
\end{proof}

\subsubsection{Skew-lexicographic order for $\G_\oplus(\mu,\nu)$}
We start with the following definition.

\begin{definition}
We say that a path $v_1,\ldots, v_n$ in a triple-graph $\G$ is {\it descending}
if for every $i=1,\ldots,n{-}1$ we have
$$
v_{i+1}=L_\sigma(v_i) \qquad \hbox{or} \qquad v_{i+1}=R_\sigma(v_i).
$$
\end{definition}

Let $\alpha$ be a finite sequence. Recall that the periodisation $\langle \alpha\rangle$
is a periodic infinite sequence with period $\alpha$ (see Definition~\ref{periodisation-def}).

\begin{proposition}\label{periodisation}
Let $\mu$ and $\nu$ be two evenly-prime sequences of integers.
Assume that $\langle\mu\rangle\prec\langle\nu\rangle$ then we have
the periodisations of middle elements in the triple-graph
$\G_\oplus(\mu,\nu)$
skew-lexicographically increasing with respect to the Farey coordinate.
Namely, let
$$
(\alpha_i,\beta_i,\gamma_i)\in \G_\oplus(\mu,\nu)
\quad \hbox{for $i=1,2$}
$$
with Farey coordinates $w^i_F$ for $i=1,2$ respectively.
Then
$$
w^1_F<w^2_F \qquad \hbox{implies} \qquad
\langle\beta_1\rangle \prec \langle\beta_2\rangle.
$$
\end{proposition}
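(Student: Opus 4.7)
The plan is to induct on the depth of a vertex in $\G_\oplus(\mu,\nu)$, establishing simultaneously the stronger structural invariant that at every vertex $(\alpha,\beta,\gamma)$ of the triple-graph one has $\beta = \alpha\oplus\gamma$ together with the strict skew-lexicographic chain
$$
\langle\alpha\rangle \prec \langle\beta\rangle \prec \langle\gamma\rangle.
$$
The base case is the root $(\mu,\mu\oplus\nu,\nu)$, which is exactly the content of Proposition~\ref{proposition1-lex}(i) applied to the evenly-prime pair $(\mu,\nu)$. For the inductive step, at a vertex $(\alpha,\beta,\gamma)$ satisfying the invariant the left child is $(\alpha,\alpha\oplus\beta,\beta)$ and the right child is $(\beta,\beta\oplus\gamma,\gamma)$. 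The concatenation relation is preserved automatically, and the ordering for the left child reduces to showing $\langle\alpha\rangle \prec \langle\alpha\oplus\beta\rangle \prec \langle\beta\rangle$ (and symmetrically for the right).

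The main obstacle is that the boundary sequences produced mid-tree need not be evenly-prime, so Proposition~\ref{proposition1-lex} does not literally apply. I would circumvent this by revisiting the proof of that proposition and observing that the evenly-prime hypothesis is used there in exactly one place, namely to upgrade a weak inequality such as $[\alpha_0\oplus\hat\alpha]\le[\hat\alpha\oplus\alpha_0]$ to a strict one. That strictness is precisely equivalent to $\langle\alpha\rangle \ne \langle\beta\rangle$, and the inductive hypothesis already supplies the stronger $\langle\alpha\rangle \prec \langle\beta\rangle$. Thus the entire proof of Proposition~\ref{proposition1-lex} goes through verbatim once the evenly-prime assumption is replaced by $\langle\alpha\rangle\ne\langle\beta\rangle$, and I would state and use this enhanced version inductively.

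With the invariant in hand, the conclusion follows from the standard Stern-Brocot-style parametrisation of the free generated binary tree by Farey coordinates: the left subtree at any node consists of exactly those descendants with strictly smaller Farey coordinate, and the right subtree of those with strictly larger (the Farey coordinate of a vertex is its position in the inorder traversal of the tree). Given vertices $v_1,v_2\in\G_\oplus(\mu,\nu)$ with $w^1_F<w^2_F$, let $v=(\alpha,\beta,\gamma)$ be their deepest common ancestor. Then, up to labelling, $v_1$ is either $v$ itself or sits in its left subtree, $v_2$ is either $v$ or sits in its right subtree, and $v_1,v_2$ are not both equal to $v$. The structural invariant, applied recursively within each subtree, implies that every middle element in the left subtree of $v$ has periodisation $\prec\langle\beta\rangle$ while every middle element in the right subtree has periodisation $\succ\langle\beta\rangle$. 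Therefore $\langle\beta_1\rangle\preceq\langle\beta\rangle\preceq\langle\beta_2\rangle$ with at least one strict inequality, giving $\langle\beta_1\rangle\prec\langle\beta_2\rangle$ as required.
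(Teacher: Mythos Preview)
Your proposal is correct and follows the same route as the paper: the paper isolates your inductive invariant $\langle\alpha\rangle\prec\langle\beta\rangle\prec\langle\gamma\rangle$ as Lemma~\ref{lemma2-lex} (derived by induction from Proposition~\ref{proposition1-lex}) and then carries out exactly your deepest-common-ancestor argument with the left and right subtrees. Your explicit handling of the evenly-prime hypothesis is in fact more careful than the paper's presentation---the paper invokes Proposition~\ref{proposition1-lex} at each inductive step without checking that the intermediate concatenations remain evenly-prime (and indeed they need not, e.g.\ $\mu=(1,1)$, $\nu=(2,2,1,1,2,2)$ gives $\mu\oplus\nu=(1,1,2,2)^2$), so your observation that the strict hypothesis $\langle\alpha\rangle\ne\langle\beta\rangle$ already forces the strict inequalities in Cases~1 and~3 of that proof is exactly the patch needed.
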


\begin{remark}
Here we do not assume that the Farey coordinate is uniquely
defined for the vertices of the triple-graph.
We get the uniqueness of Farey coordinate later in Corollary~\ref{FareyUni}.
\end{remark}

\begin{lemma}\label{lemma2-lex}
Let $v_1,\ldots, v_n$ be a descending path in a triple-graph
$\G_\oplus(\mu,\nu)$ with $\langle\mu\rangle \prec \langle\nu\rangle$.
Let also $v_i=(\alpha_i,\beta_i,\gamma_i)$ for $i=1,\ldots,n$.
Then
$$
\langle\alpha_1\rangle
\prec
\langle\beta_n\rangle
\prec
(\gamma_1).
$$
\end{lemma}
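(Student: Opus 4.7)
The plan is a straightforward induction on the length $n$ of the path, using Proposition~\ref{proposition1-lex} as the workhorse at each step, together with two elementary monotonicity observations about descending paths.

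First I would record the following invariant, valid at every vertex $v=(\alpha,\beta,\gamma)$ of $\G_\oplus(\mu,\nu)$: one has $\beta=\alpha\oplus\gamma$ together with $\langle\alpha\rangle\prec\langle\beta\rangle\prec\langle\gamma\rangle$. This propagates from the root $(\mu,\mu\oplus\nu,\nu)$---where it is exactly the hypothesis $\langle\mu\rangle\prec\langle\nu\rangle$ combined with Proposition~\ref{proposition1-lex}---down through $L_\oplus$ and $R_\oplus$. For the $L$-child $(\alpha,\alpha\oplus\beta,\beta)$ I apply Proposition~\ref{proposition1-lex} to the pair $(\alpha,\beta)$, and for the $R$-child $(\beta,\beta\oplus\gamma,\gamma)$ to $(\beta,\gamma)$. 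The base case $n=1$ of the lemma is then exactly this invariant applied to $v_1$.

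Two monotonicity observations along the descending path follow immediately from the definitions of $L_\oplus$, $R_\oplus$ and the invariant: since $\alpha_{i+1}\in\{\alpha_i,\beta_i\}$ with $\langle\alpha_i\rangle\prec\langle\beta_i\rangle$, the sequence $\langle\alpha_i\rangle$ is weakly increasing, and symmetrically $\langle\gamma_i\rangle$ is weakly decreasing; in particular $\langle\alpha_1\rangle\preceq\langle\alpha_n\rangle$ and $\langle\gamma_n\rangle\preceq\langle\gamma_1\rangle$. With these, the inductive step $n\to n+1$ splits on whether $v_{n+1}=L_\oplus(v_n)$ or $v_{n+1}=R_\oplus(v_n)$. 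In the $L$-case, $\beta_{n+1}=\alpha_n\oplus\beta_n$, and Proposition~\ref{proposition1-lex} produces $\langle\alpha_n\rangle\prec\langle\beta_{n+1}\rangle\prec\langle\beta_n\rangle$; combined with $\langle\alpha_1\rangle\preceq\langle\alpha_n\rangle$ and the inductive hypothesis $\langle\beta_n\rangle\prec\langle\gamma_1\rangle$, this yields the desired $\langle\alpha_1\rangle\prec\langle\beta_{n+1}\rangle\prec\langle\gamma_1\rangle$. The $R$-case is completely symmetric, sandwiching $\langle\beta_{n+1}\rangle$ between $\langle\beta_n\rangle$ and $\langle\gamma_n\rangle$ and then using $\langle\gamma_n\rangle\preceq\langle\gamma_1\rangle$.

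The only delicate point I foresee is the requirement in Proposition~\ref{proposition1-lex} that the two sequences being combined be evenly-prime. For the invariant to propagate cleanly at every step, I must verify that each pair $(\alpha_i,\beta_i)$ and $(\beta_i,\gamma_i)$ arising in the tree satisfies this, or else replace each component by its evenly-prime base---a substitution that preserves periodisation and hence the skew-lexicographic order. Tracking this bookkeeping without disturbing the strict inequalities produced by Proposition~\ref{proposition1-lex} is the principal obstacle in turning this sketch into a complete proof.
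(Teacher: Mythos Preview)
Your approach is essentially the same as the paper's. The paper first establishes, by induction on the depth together with Proposition~\ref{proposition1-lex}, the invariant
\[
\langle\alpha\rangle\prec\langle\beta\rangle\prec\langle\gamma\rangle
\]
at every vertex $(\alpha,\beta,\gamma)$ of $\G_\oplus(\mu,\nu)$, and then simply writes ``The last directly implies the statement of Lemma~\ref{lemma2-lex}.'' Your proof makes this last implication explicit: from the invariant one reads off that $\langle\alpha_i\rangle$ is weakly $\prec$-increasing and $\langle\gamma_i\rangle$ weakly $\prec$-decreasing along the path, whence
\[
\langle\alpha_1\rangle\preceq\langle\alpha_n\rangle\prec\langle\beta_n\rangle\prec\langle\gamma_n\rangle\preceq\langle\gamma_1\rangle.
\]
Your separate induction on $n$ with an $L/R$ case split is therefore more work than needed---once the vertex invariant is in hand, the monotonicity alone finishes the argument---but it is of course correct.

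Your worry about the evenly-prime hypothesis of Proposition~\ref{proposition1-lex} is a fair observation: the paper applies the proposition at each vertex without commenting on why the components remain evenly-prime (the ambient hypothesis in Proposition~\ref{periodisation} is only that $\mu$ and $\nu$ are). So this is not a defect of your write-up relative to the paper's; it is a point the paper itself leaves implicit. Your suggested fix---passing to the evenly-prime base, which has the same periodisation---is a clean way to handle it and does not disturb the strict inequalities.
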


\begin{proof}
For $n=1$ the statement follows directly from Proposition~\ref{proposition1-lex} and
$$
v_1=(\mu,\mu\oplus\nu,\nu).
$$

\vspace{2mm}

By the  induction on the number of operations $R$ and $L$ and by Proposition~\ref{proposition1-lex} for every triple
$$
(\alpha,\beta,\gamma)
\in\G_\oplus(\mu,\nu)
$$
we have:
$$
\langle\alpha\rangle
\prec
\langle\beta\rangle
\prec
\langle\gamma\rangle.
$$

\vspace{2mm}

The last directly implies the statement of Lemma~\ref{lemma2-lex}.
\end{proof}

{
\noindent
{\it Proof of Proposition~\ref{periodisation}}.
Let $(\alpha_p,\beta_p,\gamma_p)$
and $(\alpha_q,\beta_q,\gamma_q)$
be two triples of $\G_\oplus(\mu,\nu)$ whose Farey coordinates satisfy
$$
(\alpha_p,\beta_p,\gamma_p)_F<(\alpha_q,\beta_q,\gamma_q)_F.
$$
Consider the shortest path in the Farey tree
connecting these two Farey coordinates.
Let now the Farey coordinate of
$$
(\alpha_c,\beta_c,\gamma_c)\in\G_\oplus(\mu,\nu)
$$
have the earliest tree level in this path.
}

\vspace{2mm}

Consider two descending paths in $\G_\oplus(\mu,\nu)$
that correspond to the shortest paths in the tree 
from the $c$-triple to the $p$-triple and from the $c$-triple to the $q$-triple.
Denote them by $v_1,\ldots v_n$ and $w_1,\ldots w_m$ respectively.

\vspace{2mm}

It is clear that $v_1=w_1=(\alpha_c,\beta_c,\gamma_c)$ , and that
$$
v_2=L_\oplus (\alpha_c,\beta_c,\gamma_c)
\qquad  \hbox{and} \qquad
w_2=R_\oplus (\alpha_c,\beta_c,\gamma_c).
$$

Now on the one hand, by Lemma~\ref{lemma2-lex} applied to the path $v_2,\ldots, v_n$ we have
$$
\langle\beta_p\rangle
\prec
\langle\beta_c\rangle
$$
since $\beta_c$ is the third element in the triple $v_2$.

\vspace{2mm}

On the other hand, by Lemma~\ref{lemma2-lex} applied to the path $w_2,\ldots, w_m$ we have
$$
\langle\beta_c\rangle
\prec
\langle\beta_q\rangle
$$
since $\beta_c$ is the first element in the triple $w_2$.

\vspace{2mm}

Therefore,
$$
\langle\beta_p\rangle
\prec
\langle\beta_c\rangle
\prec
\langle\beta_q\rangle.
$$

\qed

\begin{corollary}\label{FareyUni}
Let $\mu$ and $\nu$ be two evenly-prime sequences of integers.
Assume $\langle\mu\rangle \prec \langle\nu\rangle$.
Then the triple-graph $\G_\oplus(\mu,\nu)$ is a tree.
(In the other words, Farey coordinate is a complete invariant of vertices.)
\qed
\end{corollary}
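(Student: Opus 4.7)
The plan is to derive Corollary~\ref{FareyUni} as a direct consequence of Proposition~\ref{periodisation}. By the definition of \emph{free generated}, $\G_\oplus(\mu,\nu)$ is a binary rooted tree (equivalently, Farey coordinate is a complete invariant of its vertices) precisely when each vertex admits a unique Farey code, or equivalently a unique Farey coordinate. So it suffices to establish this uniqueness for an arbitrary vertex.

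First I would record a purely combinatorial fact: distinct admissible Farey codes $(a_1,\ldots,a_{2n})$ (with $a_2,\ldots,a_{2n-1}\ge 1$ and $a_1,a_{2n}\ge 0$) produce distinct Farey coordinates in $(0,1)\cap\q$. This is because an admissible code is exactly the run-length encoding, read from the right, of a path in $\{L,R\}^{*}$ inside the Stern--Brocot/Farey tree, and distinct paths land at distinct rationals; equivalently, the endpoint conventions $a_1+1\ge 1$ and $a_{2n}+1\ge 1$ force the formula $[0;a_1+1:a_2:\ldots:a_{2n-1}:a_{2n}+1]$ to be injective on the set of admissible codes.

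Next I would argue by contradiction. Suppose some vertex $v=(\alpha,\beta,\gamma)\in\G_\oplus(\mu,\nu)$ carries two different Farey codes; then it is assigned two distinct Farey coordinates $w_F^1\ne w_F^2$, and without loss of generality $w_F^1<w_F^2$. Apply Proposition~\ref{periodisation} to the pair consisting of $v$ labeled with $w_F^1$ and the same vertex $v$ labeled with $w_F^2$. As the remark immediately following that proposition makes explicit, it is valid without assuming in advance the uniqueness of the Farey coordinate. The proposition then delivers the strict skew-lexicographic inequality
\[
\langle \beta \rangle \prec \langle \beta \rangle,
\]
which is absurd. Hence the Farey code at every vertex is unique, $\G_\oplus(\mu,\nu)$ is free generated, and the corollary follows.

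The main technical work has already been absorbed into Proposition~\ref{periodisation}; what remains here is essentially bookkeeping. The only subtle point is ensuring that one may legitimately apply that proposition to the degenerate pair $(v,v)$ equipped with two different coordinates, and that distinct Farey codes genuinely yield distinct Farey coordinates -- both of which are clarified above.
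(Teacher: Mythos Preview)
Your argument is correct and is exactly the intended one: the paper places a \qed\ immediately after the statement because the corollary follows from Proposition~\ref{periodisation} by precisely the contradiction you spell out (two distinct Farey coordinates for the same vertex would force $\langle\beta\rangle\prec\langle\beta\rangle$). You have simply made explicit the one-line deduction the paper leaves implicit, including the minor bookkeeping that distinct admissible Farey codes yield distinct Farey coordinates.
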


The last corollary can be reformulated for Generalised Gauss-Cohn
matrices due to the fact that Map $A$ has an inverse (see Equations~(\ref{MapA}) and~(\ref{MapB})):

\begin{corollary}
Let $\mu$ and $\nu$ be two evenly-prime sequences of integers.
Assume $\langle\mu\rangle \prec \langle\nu\rangle$.
Then the triple-graph $\G_\bullet(M_\mu,M_\nu)$ is a tree.
\end{corollary}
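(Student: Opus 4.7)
The plan is to transfer the tree structure already established for $\G_\oplus(\mu,\nu)$ in Corollary~\ref{FareyUni} to $\G_\bullet(M_\mu,M_\nu)$ via the bijection supplied by Map~$A$ of Section~\ref{Theory of Markov spectrum for integer forms in one diagram}. The key is that $A$ is not merely a bijection between finite sequences and $\SL(2,\Z)$ matrices, but actually a homomorphism intertwining the ternary operations $\oplus$ and $\bullet$, so it lifts to a graph isomorphism on the level of triples.

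First I would define $\Phi:\z^\infty\to\SL(2,\Z)$ by $\Phi(\alpha)=M_\alpha$ on the set of evenly-composite elements (and restrict to the sub-semigroup generated by $\mu$ and $\nu$). By Proposition~\ref{BasicPropertiesReduced}$($ii$)$, $M_{\alpha\oplus\beta}=M_\alpha\cdot M_\beta$, which is the statement that concatenation of sequences corresponds to matrix multiplication. Consequently the induced triple-map $\tilde\Phi(a,b,c)=(M_a,M_b,M_c)$ satisfies
\[
\tilde\Phi\circ L_\oplus=L_\bullet\circ\tilde\Phi
\quad\text{and}\quad
\tilde\Phi\circ R_\oplus=R_\bullet\circ\tilde\Phi,
\]
since $\oplus(a,b,c)=a\oplus b$ is sent precisely to $M_aM_b=\bullet(M_a,M_b,M_c)$. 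Moreover the seed is preserved: $\tilde\Phi(\mu,\mu\oplus\nu,\nu)=(M_\mu,M_\mu M_\nu,M_\nu)$, which is the defining seed of $\G_\bullet(M_\mu,M_\nu)$.

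Next I would check that $\tilde\Phi$ restricted to the vertex set of $\G_\oplus(\mu,\nu)$ is injective. Every vertex of $\G_\oplus(\mu,\nu)$ is of the form $(\alpha,\beta,\gamma)$ with each coordinate a finite sequence of positive integers (with even length, by Remark~\ref{even}, since $\mu$ and $\nu$ are evenly-prime and hence even). The inverse Map~$B$ of Equation~(\ref{MapB}) recovers the unique sequence from its reduced matrix, so $M_\alpha=M_{\alpha'}$ forces $\alpha=\alpha'$; hence $\tilde\Phi$ is a bijection onto the vertex set of $\G_\bullet(M_\mu,M_\nu)$. Combined with the intertwining identities above, this gives a graph isomorphism between $\G_\oplus(\mu,\nu)$ and $\G_\bullet(M_\mu,M_\nu)$.

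Finally, since Corollary~\ref{FareyUni} asserts that $\G_\oplus(\mu,\nu)$ is a tree under the hypothesis $\langle\mu\rangle\prec\langle\nu\rangle$ with $\mu,\nu$ evenly-prime, and being a tree is preserved by graph isomorphism, $\G_\bullet(M_\mu,M_\nu)$ is also a tree. The only point that demands care—really the whole proof—is confirming that Map~$A$ genuinely transports the ternary triple-graph structure, not just the underlying sets; once the multiplicativity of $A$ and its invertibility are recorded, the conclusion is immediate from the previous corollary.
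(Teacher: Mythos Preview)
Your proof is correct and follows essentially the same approach as the paper: the paper simply notes that Map~$A$ has an inverse (Equations~(\ref{MapA}) and~(\ref{MapB})) and hence Corollary~\ref{FareyUni} transfers directly to $\G_\bullet(M_\mu,M_\nu)$. You have spelled out in detail what the paper leaves implicit, namely that Proposition~\ref{BasicPropertiesReduced}$($ii$)$ makes $A$ intertwine $\oplus$ with $\bullet$ so that the bijection on sequences lifts to a graph isomorphism on triples.
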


\subsection{Reconstruction of triples in the triple-graphs
by their central elements}
\label{TreeAlg}

The reconstruction of sequence triples in the triple-graphs by their central elements
is based on the following simple proposition.

\begin{proposition}\label{once-algorithm}
Let $\mu$ and $\nu$ be two evenly-prime sequences of integers
satisfying $\langle\mu\rangle \prec \langle\nu\rangle$.
Suppose that a finite sequence $\alpha\notin \{\mu,\nu\}$
appears in some triple in the triple-graph
$\G_\oplus(\mu,\nu)$.
Then $\alpha$ appears exactly once at the center of a triple.
\end{proposition}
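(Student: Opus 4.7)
The plan is to prove the proposition in two stages: existence of at least one appearance of $\alpha$ as a middle, and uniqueness of that appearance.

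For the existence part, I will walk upward in the tree from any vertex $v$ of $\G_\oplus(\mu,\nu)$ that contains $\alpha$, keeping track of which slot $\alpha$ occupies. The key structural observation is that for the two generating operations $L_\oplus(a,b,c)=(a,a\oplus b,b)$ and $R_\oplus(a,b,c)=(b,b\oplus c,c)$, the first coordinate of a left child equals the first of the parent and the third of a left child equals the middle of the parent, while for a right child the first coordinate equals the middle of the parent and the third equals the third of the parent. Consequently, if $\alpha$ sits in the first slot of $v$, walking up the tree either traces a sequence of $L$-steps all the way back to the root (in which case $\alpha=\mu$, contradicting the hypothesis), or meets an $R$-step at some ancestor where $\alpha$ gets exposed as the middle of that ancestor's parent. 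The symmetric argument handles the case when $\alpha$ occupies the third slot, using $\alpha \ne \nu$.

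For the uniqueness part, suppose $\alpha$ appeared as the middle of two distinct triples $v_1,v_2\in\G_\oplus(\mu,\nu)$. By Corollary~\ref{FareyUni} the triple-graph is a tree, so the Farey coordinates of $v_1$ and $v_2$ are different. But then Proposition~\ref{periodisation} forces a strict skew-lexicographic inequality between the periodisations of the two middles, i.e.\ $\langle\alpha\rangle\prec\langle\alpha\rangle$ (or the reverse), which is absurd. Hence $v_1=v_2$.

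The chief obstacle I expect is making the trace-back rigorous: one must carefully distinguish which of $L_\oplus$ and $R_\oplus$ produced the current vertex from its parent, and verify that the hypothesis $\alpha\notin\{\mu,\nu\}$ precisely rules out the boundary case of an unbroken chain of $L$-steps (or $R$-steps) back to the root. Once that bookkeeping is in place, the existence argument reduces to a trivial finite descent, and the uniqueness half follows immediately from the tools already developed in Subsection~\ref{TreeStruct}.
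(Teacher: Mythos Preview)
Your proposal is correct and follows essentially the same approach as the paper. The paper's existence argument is the terse one-liner that $\alpha$, taken at the highest level at which it occurs, must sit in the middle slot (since the outer slots are inherited from the parent, and at the root the outer slots are $\mu,\nu$); your upward trace-back is exactly an explicit unpacking of that observation. For uniqueness the paper likewise cites Proposition~\ref{periodisation} directly, which is what you do (your invocation of Corollary~\ref{FareyUni} is harmless but not strictly necessary, since distinct vertices already carry distinct Farey codes and hence distinct Farey coordinates).
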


\begin{proof}
Consider $\alpha\notin \{\mu,\nu\}$.
Let $\alpha$ be an element of some triple in $\G_\oplus(\mu,\nu)$.
Therefore $\alpha$ is constructed
(i.e. appears at the highest possible level)
as a concatenation of some other two sequences
$\beta$ and $\gamma$ in some triple
$$
(\beta,\alpha,\gamma)\in \G_\oplus(\mu,\nu).
$$
This implies the existence.

The uniqueness follows directly from Proposition~\ref{periodisation}.
\end{proof}

\begin{remark}\label{Triple-reconstruction}
Let us briefly mention that any triple of $\G_\oplus(\mu,\nu)$
is reconstructible from its middle element.

From Proposition~\ref{once-algorithm} we have a one-to-one correspondence between central elements and triples.
So one can search the necessary triple by a brute force algorithm
that studies all the triples level by level
in the tree $\G_\oplus(\mu,\nu)$.

It remains to note that the middle LLS sequences in triples at level $n$ have at least $2n$ elements.
Therefore the proposed brute force algorithm will stop in a time exponential with respect to the length of the central sequence.
\end{remark}

\begin{remark}
As for Markov triples,
a similar brute force algorithm finds in a finite time all possible triples with a given central element.
However the uniqueness of a triple
with the prescribed central element
is equivalent to the uniqueness conjecture (see~Conjecture~\ref{Uniqueness-conjecture} below) formulated
by G.~Frobenius in 1913, which is still open now.
\end{remark}

\section{Markov tree and its generalisations}
\label{Markov tree and its generalisations}

In this section we discuss a generalisation of the Markov tree.
First of all we reformulate a classical theorem in our settings and write the diagram for it in Subsection~\ref{Classical Markov theory in one diagram}.
Further we define Markov LLS triple-graphs in~\ref{Markov LLS triple-graphs} and extend the definition of Markov triples
in Subsection~\ref{Generalised Markov triples}.
The properties of this generalisation are collected in the diagram of
Subsection~\ref{GeneralisedSection}
(see Theorem~\ref{generaliseTheorem}).
In Subsection~\ref{Uniqueness conjecture for Markov triples}
we recall the Uniqueness conjecture for Markov triples.
Finally in Subsection~\ref{A counterexample to the generalised uniqueness conjecture}
we show counterexamples to the generalised Markov conjecture
(see Examples~\ref{4-11} and~\ref{4-11-2}).

\subsection{Classical Markov theory in one diagram}
\label{Classical Markov theory in one diagram}

On the diagram in Figure~\ref{diagram.4}
we show all the maps that arise in classical Markov theory
for the discrete Markov spectrum.
Further in Section~\ref{GeneralisedSection} we generalise this diagram to the cases of
triple-graphs with different LLS sequences.

\vspace{2mm}

\begin{figure}
$$\epsfbox{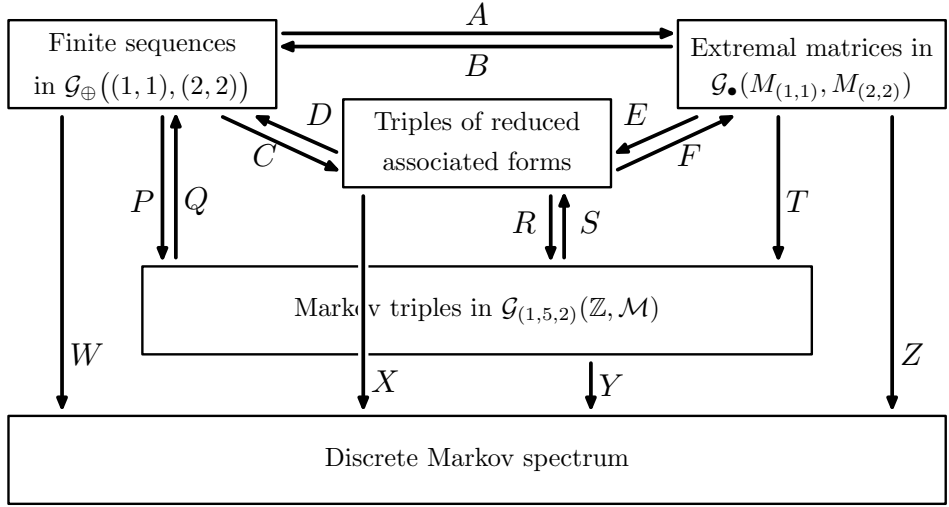}$$
		\caption{Classical Markov theory for extremal sequences.}
		\label{diagram.4}
\end{figure}

In fact Map~$C$ here is a definition of triples of associated forms:

\begin{definition}
Map $C$ of Figure~\ref{diagram.4} is induced by the  map which was considered in the periodic case above (see Figure~\ref{diagram.2}):
$$
(a_1,\ldots,a_n)
\mapsto
f_{a_1,\ldots,a_n}(x,y)=
K_1^{n-1}x^2+(K_1^n-K_2^{n-1})xy-K_2^{n}y^2.
$$
We say that the (ordered) triples of forms obtained by such map are {\it associated} to a triple of finite sequences.
\end{definition}

\begin{remark}\label{transforma}
These forms are simply related to the forms studied by A.~Markov.
After the following integer lattice preserving coordinate transformation:
$$
\left(
\begin{array}{c}
x\\
y
\end{array}
\right)
\mapsto
\left(
\begin{array}{c}
-x-2y\\
y
\end{array}
\right)
$$
the associated forms from the above definition are taken to the forms
of Theorem~\ref{MarkovSpectrum} by A.~Markov.
(In fact, this transformation corresponds to a one element shift in the LLS sequence.)
\end{remark}

\begin{remark}
Maps~$A$--$F$, $W$, $X$, $Z$ considered coordinatewise are actually restrictions of the corresponding maps in the diagram of Figure~\ref{diagram.2} in previous section. So we skip their description here.
\end{remark}

\begin{remark}\label{form-reconstruction2}
The central form in the associated triple forms uniquely determines the central LLS sequence (via the composition $B\circ F$). The other two forms
of the triple are uniquely reconstructed by the brute force
reconstruction of associated forms from reduced forms via building the tree of LLS sequences, as in Remark~\ref{Triple-reconstruction}.
\end{remark}

{\bf Map $P$:}
Consider a triple
$$
(\alpha,\beta,\gamma) \in \G_{\oplus}\big((1,1),(2,2)\big).
$$
then we set
$$
(\alpha,\beta,\gamma) \mapsto \big(\breve K(\alpha), \breve K(\beta), \breve K(\gamma)\big).
$$

\vspace{2mm}

{\bf Map $Q$:}
This map is extracted from the Theorem~\ref{MarkovSpectrum} by A.Markov.
Consider a triple $(a,M,b)$ where $M>b>a$.
Let $u$ be the least positive integer satisfying
either of the following two equations
\begin{equation}\label{EqThM}
\pm ua \equiv b \mod M.
\end{equation}
Let
$$
\frac{M}{u}=[a_1,\ldots,a_{2n-1}]
$$
be the odd regular continued fraction for $M/u$.
(Note that it is important
to take the odd continued fraction here.)
Then the period of the marked period for the corresponding LLS sequence is
$$
(a,M,b)
\mapsto
(a_{2n-1},\ldots,a_{1},2).
$$

\vspace{2mm}
{\bf Map $R$:}
$$
(f_1,f_2,f_3)
\mapsto
\Big(f_1(1,0),f_2(1,0), f_3(1,0)\Big).
$$
\vspace{2mm}

{\bf Map $S$:}
Consider a triple $(a,M,b)$ where we reorder the elements in the following way: $M>b>a$. As in Map~$Q$
let $u$ be defined by Equation~(\ref{EqThM}).
Set
$$
v=\frac{u^2+1}{M}.
$$
Then Map $S$ at the triple $(a,M,b)$ is defined as
$$
(a,M,b)
\mapsto
Mx^2+(M{+}2u)xy+(u{+}v{-}2M)y^2.
$$

\begin{remark}
Here the pair $(u,v)$ is defined as in Markov's theorem (Theorem~\ref{MarkovSpectrum}),
with an obvious change of coordinates inverse to the one introduced
in Remark~\ref{transforma}.
\end{remark}

\begin{remark}
Note that $u$ and $v$ are defined by the triple $(a,M,b)$.
The statement that $u$ and $v$ may be reconstructed entirely by $M$ is
equivalent to the uniqueness conjecture.
\end{remark}

{\bf Map $T$:}
Here we have a coordinate mapping between a triple of matrices
and a Markov triple
$$
\left(
\begin{array}{ll}
a&b\\
c&d\\
\end{array}
\right)
\mapsto
c.
$$

\vspace{2mm}

{\bf Map $Y$:}
This map is provided by Markov's theorem (Theorem~\ref{MarkovSpectrum}).
$$
(a,M,b)
\mapsto
\frac{\sqrt{9M^2-4}}{M}.
$$
The existence of the inverse to Map $Y$
is equivalent to the uniqueness conjecture (see Remark~\ref{Remark-unic}).
\vspace{2mm}

\begin{remark}
Since all the maps corresponding to arrows with opposite directions of the diagram in Figure~\ref{diagram.4} are inverse to each other,
the diagram is commutative. This gives a core for the classical Markov theory.
\end{remark}

\subsection{Markov LLS triple-graphs}
\label{Markov LLS triple-graphs}

Let us describe almost Markov and Markov LLS triple-graphs.

\vspace{2mm}

For real $a,b$ we set
$$
g_{a,b}=(x-ay)(x-by).
$$
Denote by $U_{+,+}^{a,b}$ the region defined by $x-ay>0$ and $x-by>0$.

\begin{definition}
Let $\mu$ and $\nu$ be two evenly-prime sequences of integers.
Let the following be true:

\begin{itemize}
\item $\langle\mu\rangle\prec(\nu)$ and $\langle\overline\mu\rangle\prec\langle\overline\nu\rangle$;

\item the global minima of $g_{[\langle\mu\rangle],
-[0;\langle\overline\mu\rangle]}$ at nonzero integer points
of the cone $U_{+,+}^{[\langle\mu\rangle],
-[0;\langle\overline\mu\rangle]}$ is attained at $(1,0)$.

\item the global minima of $g_{[\langle\nu\rangle],
-[0;\langle\overline\nu\rangle]}$ at nonzero integer points
of the cone $U_{+,+}^{[\langle\nu\rangle],-[0;\langle\overline\nu\rangle]}$ is attained at $(1,0)$.
\end{itemize}

Then the triple-graph $\G_\oplus(\mu,\nu)$
is called the {\it almost Markov LLS triple-graph}.
\end{definition}

\begin{definition}
We say that an even finite sequence $\alpha=(a_1,\ldots a_{2n})$ is {\it evenly palindromic}
if there exist an integer $k$ such that for every integer $m$ we have
$$
a_{k+m \mod 2n}=a_{k-m-1 \mod 2n}.
$$
\end{definition}

\begin{definition}
An almost Markov LLS triple-graph $\G_\oplus(\mu,\nu)$ is a {\it Markov LLS triple-graph}
if
\begin{itemize}
\item every sequence in every triple of $\G_\oplus(\mu,\nu)$ is evenly palindromic.
\end{itemize}
\end{definition}

\begin{example}
Let $m,n\ge 1$ and $p>q\ge 1$.
Then for the sequences
$$
\mu=\big((\underline{p})^{2n}\big), \qquad \nu=\big((\underline{q})^{2m}\big)
$$
the first three conditions are straightforward, and the palindrome condition is proved in~\cite{Matty1}.
Note that the case $p=1$, $q=2$, $m=n=1$ corresponds to the classical case of Markov tree $\G_\oplus\big((1,1),(2,2)\big)$.
\end{example}

\begin{example}
Let $m,n\ge 1$ and $p>q\ge 1$, and $\alpha$ be an evenly palindromic sequence.
Then
$$
\mu=\big((\underline{p})^{2n}\big), \qquad \nu=\big((\underline{p})^{2m}\alpha\big)
$$
satisfies the fourth condition.
Here one can construct many examples satisfying the first three conditions.
For instance, one can take
$$
\mu=(5,5,1,2,2,1) \qquad \hbox{and} \qquad \nu=(5,5).
$$
\end{example}

\begin{problem}
Describe all pairs of sequences $\mu$, $\nu$ for which
the triple-graph $\G_\oplus(\mu,\nu)$ is Markov/almost Markov.
\end{problem}

\begin{remark}
A very important property for a Markov triple-graph $\G_\oplus(\mu,\nu)$
is as follows.
For every element of every triple the corresponding reduced form  has
a normalised Markov minimum at $(1,0)$ (see Corollary~\ref{(1,0)-Minima-main}).
This property allow us to relate LLS sequences and corresponding matrices
directly with elements of the Markov spectrum.
\end{remark}

\begin{remark}
For an almost Markov graph $\G_\oplus(\mu,\nu)$ we have a slightly weaker statement:
for every element of every triple the corresponding reduced form has
a global maximum for all negative values of the form at integer points at $(1,0)$ (see Corollary~\ref{(1,0)-Minima}).
\end{remark}

\begin{remark}
Note that there is a nice expression for Map~$W$ 
in the case of $\mathcal{G}_{\oplus}((1,1),(2,2))$. Here for every sequence $(a_1,\ldots,a_n)$ in every triple of $\mathcal{G}_{\oplus}((1,1),(2,2))$ have
\[
W(a_1,\ldots,a_n)=a_n+[0;\langle a_1,\ldots,a_n\rangle]+[0;\langle a_{n-1},\ldots,a_1,a_n\rangle].
\]
\end{remark}

\subsection{Generalised Markov and almost Markov triples}
\label{Generalised Markov triples}

Starting from Markov/almost Markov LLS triple-graphs one can define generalised Markov/almost Markov graphs.
In this subsection we discuss the triple-graph structure for them.

\subsubsection{Generalised Markov/almost Markov graphs}

\begin{definition}
Consider a Markov/almost Markov LLS triple-graph $\G_\oplus(\mu,\nu)$
and replace each triple of sequences $(\alpha,\beta,\gamma)$
by a triple of integers  $\big(\breve K (\alpha),\breve K(\beta),\breve K(\gamma)\big)$.
The resulting triple-graph is called the {\it generalised Markov/almost Markov}
graph
and denoted by $T_{\mu,\nu}$.

The triples of  generalised Markov/almost Markov graph are called
{\it generalised Markov/almost Markov} triples.
\end{definition}

\begin{figure}
\epsfbox{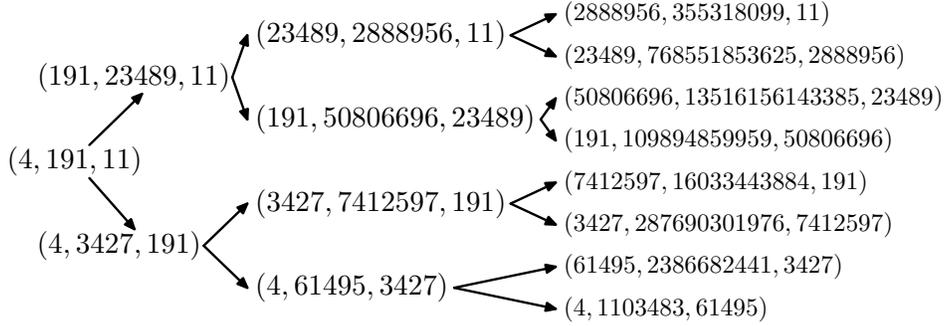}		
		\caption{The first 4 levels in the tree
                    $T_{(4,4),(11,11)}$.}
		\label{fig-mark-2}
\end{figure}

\begin{example}
In Figure~\ref{fig-mark-2}
we show the first four layers of the extended Markov triple-graph
$T_{(4,4),(11,11)}$. By construction we automatically have: $v_2>v_1,v_3$. Note that we do not require $v_2>v_3>v_1$ as it was for the Markov tree, so if a similar order is needed one should swap $v_1$ and $v_3$ if necessary.
\end{example}

\subsubsection{Triple-graph structure for generalised Markov and almost Markov graphs}

In order to define the triple-graph structure on $T_{\mu,\nu}$ we should extend
the operation
$$
\Sigma:(a,b,c) =3ac-b
$$
which is defined for the classical Markov tree $T_{(1,1),(2,2)}$.

\vspace{2mm}

The main difficulty here is that a straightforward operation on the graph $T_{\mu,\nu}$
is defined by sequences rather then by triples:
\begin{equation}\label{TildeSigma}
\tilde\Sigma\Big((\alpha,a),(\beta,b),(\gamma,c)\Big)=\breve K(\alpha\oplus\beta),
\end{equation}
where $(\alpha,\beta,\gamma)\in\G_\oplus(\mu,\nu)$ is the triple defining $(a,b,c)$.

\vspace{2mm}

Here one should remember all the sequences together with triples,
combining together $T_{\mu,\nu}$ and $\G_\oplus(\mu,\nu)$.
Hence we arrive to the following ternary
operation on the product $\z^\infty\times \z$:
$$
\otimes \Big((\alpha,a),(\beta,b),(\gamma,c)\Big)=
\Big(\alpha\oplus\beta, \tilde\Sigma\big((\alpha,a),(\beta,b),(\gamma,c)\big)
\Big).
$$
Now we can give the following definition.

\begin{definition}
Denote by $\G_\otimes (\mu,\nu)$ the triple-graph
$$
\G_{(\mu,\breve K(\mu)), (\mu\oplus\nu,\breve K(\mu\oplus\nu)), (\nu,\breve K(\nu))}
(\z^\infty\times \z,\otimes).
$$
\end{definition}

In fact as we show in Corollary~\ref{main cor},
we have
\begin{equation}\label{JustSigma}
\tilde\Sigma \Big((\alpha,a),(\beta,b),(\gamma,c)\Big)=
\frac{\breve K(\alpha\oplus\alpha)}{\breve K(\alpha)} b-c.
\end{equation}

\begin{remark}
In the classical case of $\G_\otimes \big((1,1),(2,2)\big)$ we have
$$
\frac{\breve K(\alpha\oplus\alpha)}{\breve K(\alpha)}=3a
$$
(see Proposition~\ref{useful-proposition} later)
and hence
$$
\tilde\Sigma \Big((\alpha,a),(\beta,b),(\gamma,c)\Big)=
\Sigma(a,b,c)=3ac-b.
$$
This establishes a straightforward equivalence between the triples of
$T_{(1,1),(2,2)}$ and the triples of $\G_\otimes \big((1,1),(2,2)\big)$.
\end{remark}

So there is a natural question here.

\begin{problem}\label{prob2}
For which triple-graphs $\G_\otimes (\mu,\nu)$ is the function
$
\tilde \Sigma
$
a function on triples $(a,b,c)$ (and not depend on $\alpha$)?
\end{problem}

This is equivalent to the following one.

\begin{problem}
For which pairs of even sequences $(\mu,\nu)$ does
every triple of $T_{\mu,\nu}$ occur only once in the tree.
\end{problem}

There is not much known here.
Some partial examples are given in Section~\ref{Recurrence relation for extended Markov trees},
see Example~\ref{SimpleRelationExample}.

\subsection{Generalised Markov theory in one diagram}\label{GeneralisedSection}

\begin{figure}
$$\epsfbox{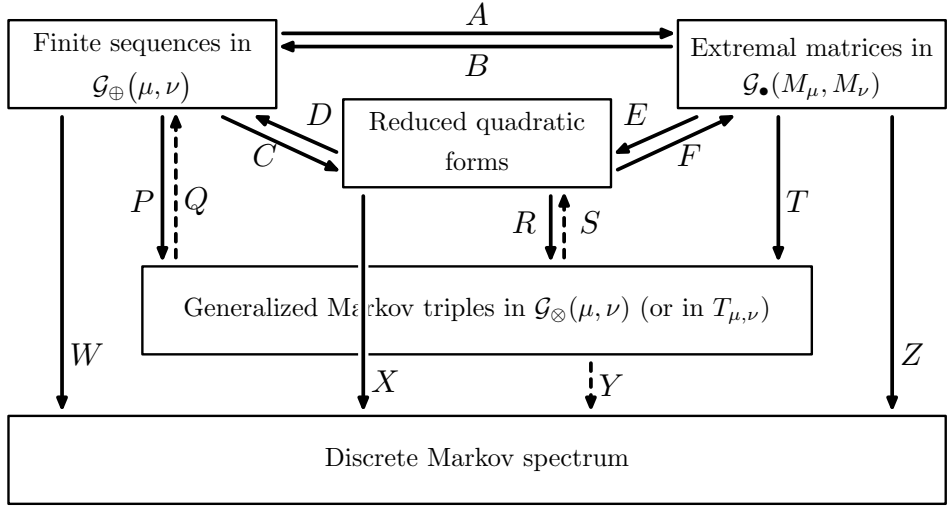}$$
		\caption{Extended Markov theory for extremal sequences.}
		\label{diagram.5}
\end{figure}

Finally for generalised Markov triple-graphs we have the following
diagram.

\vspace{2mm}

Let $\G_\otimes(\mu,\nu)$ be a triple-graph
for some generalised Markov triple-graph $T_{\mu,\nu}$.
The extension of classical Markov theory for this triple-graph
is presented in the diagram of Figure~\ref{diagram.5}.
As we will discuss later
in Subsection~\ref{Partial-answer},
some triple-graphs $\G_\otimes(\mu,\nu)$ can be reconstructed from
the triple-graph $T_{\mu,\nu}$.

\vspace{2mm}

In the diagram of Figure~\ref{diagram.5} most of the maps are literally extensions from the classical case of Figure~\ref{diagram.4}.
This follows directly from
Corollary~\ref{(1,0)-Minima-main} (which we show below).
All of the forms corresponding to generalised Markov triple-graphs
have a normalised Markov minimum at $(1,0)$, and therefore
by Definition~\ref{extremal} they are extremal.
This observation leads to the fact that the properties that A.~Markov has
detected on a particular example of discrete Markov spectrum
could be seen much wider.
As a consequence we have the following theorem.

\begin{theorem}\label{generaliseTheorem}
Let $G_\otimes(\mu,\nu)$ be a triple graph for some generalised
Markov triple-graph $T_{\mu,\nu}$
Then the maps of the diagram of Figure~\ref{diagram.5} are defined as follows.
\begin{itemize}
\item{
The maps
$$
A \hbox{--} F,W,X,Z
$$
are precisely the restrictions of the corresponding maps in the diagram of
Figure~\ref{diagram.2}.
}

\item{
The maps $P,R$ and $S$ are as in Figure~\ref{diagram.4}:
$$
\begin{array}{l}
P:(\alpha,\beta,\gamma) \mapsto \big(\breve K(\alpha), \breve K(\beta), \breve K(\gamma)\big);\\
R:
(f_1,f_2,f_3)
\mapsto
\Big(f_1(1,0),f_2(1,0), f_3(1,0)\Big);
\\
T:
\left\{\left(
\begin{array}{ll}
a_i&b_i\\
c_i&d_i\\
\end{array}
\right)
\Big| i=1,2,3
\right\}
\mapsto
(c_1,c_2,c_3).
\\
\end{array}
$$
}

\item
{
Finally: Map~$Q$ is trivial; while Maps $S$ and $Y$ are derived from maps
$C$ and $W$ respectively for the triple-graph $\G_\otimes(\mu,\nu)$.
}
\end{itemize}
\qed
\end{theorem}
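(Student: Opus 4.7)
The plan is to reduce the theorem to a single structural input, Corollary~\ref{(1,0)-Minima-main}, and then to show that every map in Figure~\ref{diagram.5} is either a coordinatewise restriction of the machinery already set up in Figure~\ref{diagram.2} or a direct copy of a map from the classical diagram of Figure~\ref{diagram.4}. The key observation is that for every sequence $\alpha$ appearing as a coordinate of a triple in the Markov or almost Markov LLS triple-graph $\G_\oplus(\mu,\nu)$, the associated reduced form $f_\alpha$ attains its normalised Markov minimum at the point $(1,0)$. By Definition~\ref{extremal} this makes $\alpha$ an extremal even sequence, $M_\alpha$ an extremal reduced matrix, and $f_\alpha$ an extremal reduced form.

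I would first handle the maps $A$--$F$, $W$, $X$, $Z$. These are exactly the maps of Figure~\ref{diagram.2}, where they were shown to be bijections (respectively, explicit formulas into the Markov spectrum) on the sets of extremal sequences, extremal reduced matrices, and extremal reduced forms. Since by the preceding remark each coordinate of each triple in $\G_\otimes(\mu,\nu)$ lands in these sets, the coordinatewise extensions to triples are well-defined, and the commutativity of every square and triangle involving only these maps in Figure~\ref{diagram.5} follows, entry by entry, from the commutativity already established for Figure~\ref{diagram.2}.

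Next, I would dispatch Maps~$P$, $R$, $T$ directly. Each is a coordinatewise projection or evaluation
\[
(\alpha,\beta,\gamma)\mapsto\bigl(\breve K(\alpha),\breve K(\beta),\breve K(\gamma)\bigr),
\qquad
(f_1,f_2,f_3)\mapsto\bigl(f_1(1,0),f_2(1,0),f_3(1,0)\bigr),
\]
and for $T$ one reads off the lower-left entry of each matrix in the triple, exactly as in Figure~\ref{diagram.4}. Their commutativity with Maps~$C$ and $E$ is immediate from Proposition~\ref{BasicPropertiesReduced} (parts $(v)$ and $(vi)$), which identifies $(1,0)$ as a sail vertex and gives the explicit form of $f_{a_1,\ldots,a_n}$. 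Finally, since every vertex of $\G_\otimes(\mu,\nu)$ already carries both the sequence and the number-triple by construction, Map~$Q$ is the forgetful projection to the sequence coordinate and hence trivial. One then defines
\[
S \;=\; C\circ Q \qquad\text{and}\qquad Y \;=\; W\circ Q,
\]
and the formulas agree with those in Figure~\ref{diagram.4} by the explicit descriptions of $C$ and $W$.

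The main obstacle in this program is not the bookkeeping of maps but the legitimacy of invoking Corollary~\ref{(1,0)-Minima-main}: establishing that, for every sequence arising in a triple of $\G_\oplus(\mu,\nu)$, the minimum of $|f_\alpha|$ over $\z^2\setminus\{(0,0)\}$ is attained at $(1,0)$ requires the palindromicity hypothesis in the definition of a Markov LLS triple-graph together with the $(1,0)$-extremality in the positive cone coming from the definition of an almost Markov graph. Once that corollary is in hand, the entire diagram of Figure~\ref{diagram.5} becomes a coordinatewise repackaging of Figures~\ref{diagram.2} and~\ref{diagram.4}, and commutativity of the generalised diagram is inherited from the classical one.
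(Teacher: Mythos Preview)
Your proposal is correct and follows essentially the same route as the paper: the paper presents the theorem with a \qed immediately after the statement, treating it as a consequence of the preceding discussion, which singles out Corollary~\ref{(1,0)-Minima-main} as the sole substantive input (every form in the Markov LLS triple-graph has its normalised Markov minimum at $(1,0)$, hence is extremal by Definition~\ref{extremal}), after which Maps~$A$--$F$, $W$, $X$, $Z$ are restrictions from Figure~\ref{diagram.2}, Maps~$P$, $R$, $T$ are copied coordinatewise from Figure~\ref{diagram.4}, and $Q$, $S$, $Y$ are read off from the $\G_\otimes$ structure exactly as you describe.
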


This theorem confirms that a remarkable structure
discovered by Markov for the discrete Markov spectrum
has a general nature in geometry of numbers.

\vspace{2mm}

Let us finalise this section with several important remarks
concerning the diagram of Figure~\ref{diagram.5}.

\begin{remark}
Explicit expressions for Maps~$Q$, $S$, and $Y$
for $T_{\mu,\nu}$ (in case of existence) are
not known to the authors.
For this reason they are marked by dashed lines.
\end{remark}

\begin{remark}
Similar to the Markov settings, all the triples in the Generalised Markov diagram
are reconstructible by the middle element in a unique way (see Remark~\ref{Triple-reconstruction} and Remark~\ref{form-reconstruction2}).
\end{remark}

\begin{remark}\label{crutial-one}
Corollary~\ref{(1,0)-Minima-main} below is crucial for the maps $W$, $X$,
and $Z$ of this section.
It establishes a connection between sequences, forms, and matrices
on the one hand and the elements of Markov spectrum on the other hand.
\end{remark}

\begin{remark}
 Similar to the classical Markov case (see Figure~\ref{diagram.4}) the maps corresponding to arrows with opposite directions in the diagram in Figure~\ref{diagram.5} are inverse to each other.
 Therefore the diagram in Figure~\ref{diagram.5} is commutative.
\end{remark}

\subsection{Uniqueness conjecture for Markov triples}
\label{Uniqueness conjecture for Markov triples}

The following conjecture is due to G.~Frobenius.
\begin{conjecture}{\bf (Uniqueness conjecture, 1913.)}\label{Uniqueness-conjecture}
Every Markov number appears exactly once as the maximum in a Markov triple.
\end{conjecture}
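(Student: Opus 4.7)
The plan is to reformulate the conjecture within the tree-of-triples framework developed above and then reduce it to an injectivity statement about continuants. By Corollary~\ref{FareyUni} the triple-graph $\G_\oplus((1,1),(2,2))$ is a free-generated tree, and by Map~$P$ of Figure~\ref{diagram.4} the assignment
$$
(\alpha,\beta,\gamma)\longmapsto\bigl(\breve K(\alpha),\breve K(\beta),\breve K(\gamma)\bigr)
$$
sends triples of LLS sequences bijectively onto Markov triples, with the maximal element always in the central position (this follows from the skew-lexicographic ordering of Proposition~\ref{periodisation} applied to periodisations, together with the fact that $\breve K$ is monotone in the relevant regime). Under this bijection the uniqueness conjecture is equivalent to the assertion that the map $\beta\mapsto\breve K(\beta)$, restricted to the set of central sequences of $\G_\oplus((1,1),(2,2))$, is injective.

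Given this reduction, the first step would be to enumerate the central sequences level by level using the recursion sending $(\alpha,\alpha\oplus\beta,\beta)$ to its two children, indexed by the Farey coordinate. Then one would combine the matrix factorisation $M_\beta=\prod_i M_{a_i}$ of Proposition~\ref{BasicPropertiesReduced}(i) with the evenly-palindromic structure of these central sequences (which forces $M_\beta$ to have very symmetric entries) in the hope of producing a constraint on when two distinct sequences can yield the same continuant. A complementary tactic is to exploit asymptotics: for $\beta=\alpha_1\oplus\cdots\oplus\alpha_k$ corresponding to a Farey coordinate, $\breve K(\beta)$ behaves to leading order like $\prod_i[\langle\alpha_i\rangle]$, so one might try to recover the Farey coordinate from arithmetic invariants of $\breve K(\beta)$, for instance via the continued fraction expansion of $\breve K(\beta\oplus\beta)/\breve K(\beta)$ (which by Proposition~\ref{useful-proposition} captures the recursion coefficient in a very clean way).

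The main obstacle is that this is the Frobenius conjecture of 1913, famously open for more than a century despite many attacks. No algebraic obstruction to a coincidence of continuants across distant branches of the tree is known, and the Markov identity $x^2+y^2+z^2=3xyz$ by itself supplies only local information near each triple. Only partial results are available, typically under strong arithmetic hypotheses on the Markov number (such as being a prime power or twice a prime power). The framework developed here gives the conjecture a particularly clean combinatorial form as an injectivity statement for $\breve K$ on a tree of palindromic sequences, but converting this reformulation into a proof would presumably require a genuinely new ingredient: either a hyperbolic-geometric input in the style of Series, the dynamics of Conway rivers as in~\cite{SV2018}, or a new identity internal to the theory of continuants and $\SL(2,\z)$ traces.
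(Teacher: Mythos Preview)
The statement is a \emph{conjecture}, not a theorem, and the paper offers no proof of it. On the contrary, the paper explicitly says (in the remark following Remark~\ref{Triple-reconstruction}) that the uniqueness conjecture ``is still open now,'' and Remark~\ref{Remark-unic} reiterates that the invertibility of Map~$Y$ is not known. So there is nothing to compare your attempt against.

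Your proposal is appropriate in that you do not pretend to prove the conjecture: you give a reformulation in the triple-graph language, sketch some lines of attack, and then correctly identify the obstacle --- that this is Frobenius' 1913 conjecture and remains open. Your reduction to the injectivity of $\beta\mapsto\breve K(\beta)$ on central sequences of $\G_\oplus((1,1),(2,2))$ is sound and matches the spirit of the paper's own remarks (the paper observes that uniqueness is equivalent to the maps $W$, $X$, $Z$, or $Y$ being one-to-one with their images, and that the reconstruction $M\mapsto(a,M,b)$ is what is missing). One small caution: you assert that $P$ is a bijection as though this were established independently; in the paper this is packaged into the commutativity of the diagram of Figure~\ref{diagram.4} via the explicit inverse $Q$, which is defined in terms of the \emph{whole triple} $(a,M,b)$ rather than $M$ alone --- precisely the point. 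So your reformulation is consistent with the paper, but neither you nor the paper has a proof, and none should be expected here.
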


\begin{remark}
The uniqueness conjecture is a slightly modified version of the fact that
$X$ is one-to-one with its image.

This in its turn is equivalent to the fact that
either $W$, or $X$, or $Z$ are one-to-one with their image.
\end{remark}

\begin{remark}\label{Remark-unic}
If we restrict $Y$ from Markov triples to single Markov numbers,
then we have one-to-one map with an image established by
$$
M
\mapsto
\frac{\sqrt{9M^2-4}}{M}
$$
(which is an increasing function of positive numbers).

So it is not known if it is possible reconstruct $M \mapsto (a,M,b)$.
As we show in Examples~\ref{4-11}
and~\ref{4-11-2}, this is not always the case for generalised Markov numbers.
\end{remark}

\subsection{A counterexample to the generalised uniqueness conjecture}
\label{A counterexample to the generalised uniqueness conjecture}

First of all we formulate the uniqueness conjecture for generalised
Markov numbers.

\begin{conjecture}{\bf (Generalised uniqueness conjecture, 2018.)}
\label{Uniqueness-conjecture2018}
Consider a generalised Markov triple-graph $T_{\mu,\nu}$.
Every generalised Markov number appears exactly once
as a maximal (middle) element in a Markov triple in $T_{\mu,\nu}$.
\end{conjecture}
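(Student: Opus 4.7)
The plan is to reduce the conjecture to an injectivity question on continuants and attempt to establish that injectivity via the skew-lexicographic monotonicity already proved for the tree $\G_\oplus(\mu,\nu)$. By Corollary~\ref{FareyUni} and Proposition~\ref{once-algorithm}, each finite sequence $\beta$ occurs exactly once as the central element of a triple in $\G_\oplus(\mu,\nu)$, and the tree $T_{\mu,\nu}$ is obtained by applying $\breve K$ coordinatewise. So the conjecture collapses to the single question: can two distinct central sequences of $\G_\oplus(\mu,\nu)$ yield the same penultimate continuant $\breve K$?

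First I would try to lift the skew-lexicographic ordering of Proposition~\ref{periodisation} to a strict monotonicity statement for $\breve K$ along Farey coordinates. For central sequences $\beta_1\prec\beta_2$ produced at the same tree depth, the hope is $\breve K(\beta_1)<\breve K(\beta_2)$, which would separate central values level by level. For sequences at different depths, the length grows geometrically along descending paths (each tree step concatenates two parents), so a comparison of continuant growth across depths should in principle complete the argument. Equivalently, I would pass through the associated matrices $M_\beta$ via Map~$A$ and try to show that $\trace(M_\beta)$ separates central elements; combined with Maps~$W$ and $Z$ from the diagram of Figure~\ref{diagram.5}, this packages injectivity as a monotonicity of the associated Markov spectrum value along the tree.

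The hard part is that both monotonicity hopes are false in general. Continuants satisfy many nontrivial identities beyond the reversal $K(a_1,\ldots,a_n)=K(a_n,\ldots,a_1)$, and since Markov LLS triple-graphs force sequences to be evenly palindromic the redundancy is substantial. Moreover, in the classical case $(\mu,\nu)=\big((1,1),(2,2)\big)$ this injectivity is exactly Frobenius's uniqueness conjecture of 1913, which has resisted more than a century of effort; any soft argument general enough to cover arbitrary $(\mu,\nu)$ would in particular settle Frobenius, so one cannot realistically expect a clean proof. The natural strategy is therefore to reverse course and hunt for counterexamples: enumerate the first several levels of $\G_\otimes(\mu,\nu)$ for simple non-classical choices such as $(\mu,\nu)=\big((4,4),(11,11)\big)$ (cf.\ Figure~\ref{fig-mark-2}), compute $\breve K$ on each central sequence, and look for coincidences. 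In the generalised setting no Diophantine relation analogous to $x^2+y^2+z^2=3xyz$ constrains the central values, so coincidences should be plausible and Examples~\ref{4-11} and~\ref{4-11-2} are expected to exhibit them; the conjecture is therefore more likely to be refuted than proved.
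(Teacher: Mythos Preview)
Your analysis is correct and aligns with the paper's treatment: the statement is a conjecture that the paper does not prove but rather \emph{refutes} for the specific tree $T_{(4,4),(11,11)}$ via the explicit counterexamples in Examples~\ref{4-11} and~\ref{4-11-2}, exactly as you anticipate. Your reduction to injectivity of $\breve K$ on central sequences, your observation that the classical case is the open Frobenius conjecture, and your conclusion that one should search for coincidences in a non-classical tree are all on the mark; the paper simply supplies the concrete numerics (e.g.\ $\breve K\big(4,4,(\underline{11})^8\big)=\breve K\big((\underline{4})^{12},11,11\big)=355318099$) that you predict should exist.
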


Studying the extended Markov numbers of
$T_{(p,p),(q,q)}$
for different $(p,q)$ we have spotted two counterexamples
to the generalised uniqueness conjecture.
They are both for $p=4$ and $q=11$.
(Note that the first layers of the corresponding
triple-graph are shown in Figure~\ref{fig-mark-2}.)

\begin{example}\label{4-11}
First of all consider
$$
\alpha_1=\big(4,4,(\underline{11})^8\big)
\qquad
\alpha_2=\big((\underline{4})^{12},11,11\big)
$$
We have
$$
\breve K(\alpha_1)=\breve K(\alpha_2)=355318099,
$$
and hence Map~Y is not injective.
This implies that the associated forms
$$
\begin{array}{c}
355318099x^2+3856825285xy-928389367y^2\\
\hbox{and}\\
355318099x^2+3856242857xy-930136651y^2
\end{array}
$$
have the same normalised Markov minima.
Therefore the number $355318099$ appears at least twice as a maximal
(middle) element in the generalised Markov triples
of the triple-graph $\G_\oplus\big((4,4),(11,11)\big)$
\end{example}

\begin{example}\label{4-11-2}
The second example is for
$$
\alpha_1=\big(4,4,(\underline{11})^{14}\big)
\qquad
\alpha_2=\big((\underline{4})^{22},11,11\big).
$$
Here
$$
\breve K(\alpha_1)=\breve K(\alpha_2)=661068612553111.
$$
And therefore, for the same reason it is a counterexample
to the generalised uniqueness conjecture.

The corresponding forms are respectively
$$
\begin{array}{c}
661068612553111 x^2  + 7175615729089857 x y - 1727266560524267 y^2
\\
\hbox{and} \\
661068612553111 x^2  + 7174532122960713 x y - 1730517378911699 y^2.
\end{array}
$$
\end{example}

\begin{remark}
The arrangements of zero lines for the forms of Examples~\ref{4-11}
and~\ref{4-11-2} are not integer congruent to each other,
since they have different LLS sequences.
\end{remark}

\section{Related theorems and proofs}
\label{Related theorems and proofs}

In this section we prove several theorems that are used in generalised Markov theory.
In Subsection~\ref{Concatenation of sequences and global minima of the corresponding forms} we prove that any Markov LLS triple-graph $\G_\oplus(\mu,\nu)$ has Markov minima at $(1,0)$ and that these minima are unique up to the integer symmetries of the sail containing $(1,0)$.
Further in Subsection~\ref{Recurrence relation for extended Markov trees}
we prove that the triples of the triple-graph
$\G_\otimes(\mu,\nu)$ satisfy Equation~(\ref{JustSigma}) on page~\pageref{JustSigma}.
Finally in Subsection~\ref{Partial-answer} we prove the first results towards a solution
of Problem~\ref{prob2}.

\subsection{Concatenation of sequences and global minima of the corresponding forms}	
\label{Concatenation of sequences and global minima of the corresponding forms}

In this section we formulate and prove one of the central results of the current paper
(Corollary~\ref{(1,0)-Minima-main}),
which states that all the forms have a normalised Markov minimum (see Definition~\ref{Markov-minimum}).
This result is crucial for linking extended Markov numbers with normalised Markov minima for them.

\vspace{2mm}

Recall that for real $a,b$ we consider
$$
g_{a,b}=(x-ay)(x-by),
$$
and that $U_{+,+}^{a,b}$ denotes the region defined by $x-ay>0$ and $x-by>0$.

\begin{lemma}\label{lemma12345}
Let $a$ and $b$ be two real numbers.
Consider a point $v=(x,y)$ of the region $U_{+,+}^{a,b}$, and
let $\varepsilon$ be a positive real number.
Then the following hold.
\vspace{1mm}

{\it $($i$)$}
If $y> 0$ then
$$
\left\{
\begin{array}{l}
g_{a+\varepsilon,b}(v)< g_{a,b}(v)\\
g_{a,b+\varepsilon}(v)< g_{a,b}(v)\\
\end{array}
\right.
.
$$

\vspace{1mm}

{\it $($ii$)$}
If $y< 0$ then
$$
\left\{
\begin{array}{l}
g_{a+\varepsilon,b}(v)> g_{a,b}(v)\\
g_{a,b+\varepsilon}(v)> g_{a,b}(v)\\
\end{array}
\right.
.
$$

{\it $($iii$)$}
If $y= 0$ then
$$
\left\{
\begin{array}{l}
g_{a\pm\varepsilon,b}(v)= g_{a,b}(v)\\
g_{a,b\pm\varepsilon}(v)= g_{a,b}(v)\\
\end{array}
\right.
.
$$
\end{lemma}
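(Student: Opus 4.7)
The plan is to reduce everything to a single-line computation of the difference $g_{a+\varepsilon,b}(v) - g_{a,b}(v)$ and then read off the sign using the defining inequalities of $U_{+,+}^{a,b}$. Concretely, expanding
\[
g_{a+\varepsilon,b}(x,y) - g_{a,b}(x,y) = \bigl((x-ay)-\varepsilon y\bigr)(x-by) - (x-ay)(x-by) = -\varepsilon y(x-by),
\]
and symmetrically
\[
g_{a,b+\varepsilon}(x,y) - g_{a,b}(x,y) = -\varepsilon y(x-ay).
\]
Because $v\in U_{+,+}^{a,b}$, both factors $x-ay$ and $x-by$ are strictly positive, and $\varepsilon>0$ by hypothesis. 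Hence the sign of each difference is precisely the sign of $-y$.

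From here the three cases are immediate. If $y>0$, then $-\varepsilon y(x-by)<0$ and $-\varepsilon y(x-ay)<0$, which gives part $(\mathrm{i})$. If $y<0$, both expressions are strictly positive, giving part $(\mathrm{ii})$. If $y=0$, both expressions vanish, so the same argument applied to $\pm\varepsilon$ yields the equalities in part $(\mathrm{iii})$ (note that for $y=0$ the assumption $v\in U_{+,+}^{a,b}$ merely forces $x>0$, which is not needed for the equalities but is consistent).

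There is essentially no obstacle here; the only thing to be careful about is to expand $g_{a+\varepsilon,b} - g_{a,b}$ in a way that isolates a single signed monomial in $\varepsilon$, $y$, and one of the linear factors defining $U_{+,+}^{a,b}$, rather than expanding both quadratics in full. Once this is done, the positivity of the two factors $x-ay$ and $x-by$ is exactly what one needs; no use of generality, periodicity, or any properties of continued fractions is required. The lemma is purely a pointwise sign analysis, and will be invoked later only through the position of $v$ relative to the cone $U_{+,+}^{a,b}$.
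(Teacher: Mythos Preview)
Your proof is correct and essentially identical to the paper's own argument: the paper also computes $g_{a+\varepsilon,b}(x,y)-g_{a,b}(x,y)=-\varepsilon y(x-by)$, observes that $x-by>0$ on $U_{+,+}^{a,b}$ so the sign is opposite to that of $y$, and dismisses the remaining cases as analogous (with item~(iii) called trivial). Your write-up is in fact slightly more explicit, since you also spell out the symmetric identity $g_{a,b+\varepsilon}(x,y)-g_{a,b}(x,y)=-\varepsilon y(x-ay)$.
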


\begin{proof}
Consider $v=(x,y)\in U_{+,+}^{a,b}$, and $\varepsilon >0$.
We have
$$
g_{a+\varepsilon,b}(x,y)-g_{a,b}(x,y)=-\varepsilon y(x-by).
$$
The sign of this expression is opposite to the sign of $y$
since $x{-}by>0$ for every point of the region $U_{+,+}^{a,b}$
and $\varepsilon>0$.
Therefore if $y > 0$ then the difference is negative,
and hence $g_{a+\varepsilon,b}(v)< g_{a,b}(v)$.

All the other three cases of~{\it$($i$)$} and~{\it$($ii$)$}
are analogous.

Item~{\it$($iii$)$} is trivial.
\end{proof}

\begin{remark}
Recall that we write $\big(a_1,\ldots, a_k, \langle b_1,\ldots, b_l\rangle \big)$ for an eventually
periodic sequence with a preperiod $(a_1,\ldots, a_k)$ and a period $(b_1,\ldots, b_l)$.
\end{remark}

\begin{proposition}\label{prop123}
Consider the following four infinite continued fraction expressions with positive integer elements:
$$
\begin{array}{l}
a_1=[0;p_1:p_2:\ldots ],\\
a_2=[0;q_1:q_2:\ldots ],\\
b_1=-[r_1:r_2:\ldots ],\\
b_2=-[s_1:s_2:\ldots ].\\
\end{array}
$$
Let $(p)_{n=1}^\infty  \prec  (q)_{n=1}^\infty$ and $(r)_{n=1}^\infty  \prec  (s)_{n=1}^\infty$.
Consider also
$$
v\in U_{+,+}^{a_1,b_1}\cap U_{+,+}^{a_2,b_2}.
$$
Then the following is true:
$$
\sign\big(g_{a_1,b_1}(v)-g_{a_2,b_2}(v)\big)=\sign(y).
$$
\end{proposition}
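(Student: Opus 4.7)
The plan is to reduce the statement to the pointwise monotonicity already established in Lemma~\ref{lemma12345}, using a one-variable-at-a-time interpolation between $(a_2,b_2)$ and $(a_1,b_1)$.

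First I would translate the skew-lexicographic hypotheses into real inequalities via Proposition~\ref{LastLabel169}. From $(p_n)\prec (q_n)$ we get $[p_1;p_2:\ldots]<[q_1;q_2:\ldots]$, and since $a_1=1/[p_1;p_2:\ldots]$ and $a_2=1/[q_1;q_2:\ldots]$ this yields $a_1>a_2$; analogously $(r_n)\prec (s_n)$ gives $[r_1;r_2:\ldots]<[s_1;s_2:\ldots]$, hence $b_1=-[r_1;r_2:\ldots]>b_2$. Write $\varepsilon_a:=a_1-a_2>0$ and $\varepsilon_b:=b_1-b_2>0$. These are the positive shifts that Lemma~\ref{lemma12345} will act on.

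Next I would introduce the mixed pair $(a_1,b_2)$ and decompose
\[
g_{a_1,b_1}(v)-g_{a_2,b_2}(v)=\bigl(g_{a_1,b_1}(v)-g_{a_1,b_2}(v)\bigr)+\bigl(g_{a_1,b_2}(v)-g_{a_2,b_2}(v)\bigr).
\]
To apply Lemma~\ref{lemma12345} I need $v$ to lie in the relevant $U_{+,+}$ cones. The hypothesis $v\in U_{+,+}^{a_1,b_1}\cap U_{+,+}^{a_2,b_2}$ supplies all four strict inequalities $x-a_iy>0$ and $x-b_jy>0$ for $i,j\in\{1,2\}$; in particular $v\in U_{+,+}^{a_1,b_2}$ (the pair needed for the first bracket) and $v\in U_{+,+}^{a_2,b_2}$ (the pair needed for the second bracket).

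Finally, I would apply Lemma~\ref{lemma12345} to each bracket: to the first with base point $(a_1,b_2)$ and shift $\varepsilon_b>0$ in the $b$-coordinate, and to the second with base point $(a_2,b_2)$ and shift $\varepsilon_a>0$ in the $a$-coordinate. In each case the lemma pins the sign of the bracket exclusively to $\sign(y)$, and crucially \emph{in the same direction}, so the two contributions do not cancel and the sum has the asserted sign. The degenerate case $y=0$ is immediate from Lemma~\ref{lemma12345}(iii). I do not expect a substantive obstacle: the only delicate step is verifying that the intermediate pair $(a_1,b_2)$ still leaves $v$ inside a positive cone, and this is handled cleanly by intersecting the two cone conditions supplied by the hypothesis.
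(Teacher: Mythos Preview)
Your proposal is correct and follows exactly the paper's approach: the paper's proof simply states that $(p)\prec(q)$ gives $a_1>a_2$, that $(r)\prec(s)$ gives $b_1>b_2$, and that the conclusion is then ``a direct corollary of Lemma~\ref{lemma12345} applied twice.'' Your write-up is a faithful expansion of that two-line argument---you make explicit the intermediate pair $(a_1,b_2)$, the telescoping decomposition, and the verification that $v$ lies in the required cones, all of which are implicit in the paper's phrase ``applied twice.''
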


\begin{proof}
Since $(p)_{n=1}^\infty  \prec  (q)_{n=1}^\infty$, we have $a_1>a_2$.
Since $(r)_{n=1}^\infty  \prec  (s)_{n=1}^\infty$, we have $b_1>b_2$.
Now the statement of the proposition is a direct corollary of Lemma~\ref{lemma12345} applied twice.
\end{proof}

\begin{remark}
Note that here $a$ and $b$ are the cotangents of the slopes for  $g_{a,b}$
(while they are tangents for $f_{a,b}$ considered above).
\end{remark}

Let $\alpha$ be a finite sequence of numbers, denote by $\overline \alpha$ a sequence obtained from $\alpha$ by the inversion of the order of the elements.

\vspace{2mm}

For the remaining part of this subsection it is convenient to use the following definition.
\begin{definition}
Let $\alpha$ be a finite sequence of positive integers. A pair of numbers
$$
[\langle\alpha\rangle], -[0:\langle\overline{\alpha}\rangle]
$$
is called the {\it two-sided periodisation of $\alpha$} and denoted by
$[\langle\langle\alpha\rangle\rangle]$.
\end{definition}

For the proof of the next theorem we need the following direct corollary of
Proposition~\ref{prop123}. 

\begin{corollary}\label{coro1234}
Let $\mu$ and $\nu$ be two evenly-prime sequences of integers.
Assume that $\langle\mu\rangle\prec\langle\nu\rangle$ and $\langle\overline\mu\rangle\prec\langle\overline\nu\rangle$;
\vspace{1mm}

{
\noindent
$($a$)$ Consider
$$
v\in U_{+,+}^{[\langle\langle\mu\rangle\rangle]}\cap U_{+,+}^{[\langle\langle\mu\nu\rangle\rangle]}\quad (=U_{+,+}^{[\langle\langle\mu\rangle\rangle]}).
$$
Then
$$
\sign\big(g_{[\langle\langle\mu\nu\rangle\rangle]}(v)-g_{[\langle\langle\mu\rangle\rangle]}(v)\big)=\sign(y).
$$
}

{\noindent
$($b$)$ Consider
$$
v\in U_{+,+}^{[\langle\langle\nu\rangle\rangle]}\cap U_{+,+}^{[\langle\langle\mu\nu\rangle\rangle]}\quad (=U_{+,+}^{[\langle\langle\mu\nu\rangle\rangle]}).
$$
Then
$$
\sign\big(g_{[\langle\langle\mu\nu\rangle\rangle]}(v)-g_{[\langle\langle\nu\rangle\rangle]}(v)\big)=-\sign(y).
$$
}
\qed
\end{corollary}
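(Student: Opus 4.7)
The plan is to apply Proposition~\ref{prop123} twice, once for each part, after making the appropriate identification of the two-sided periodisations $[\langle\langle\mu\rangle\rangle]$, $[\langle\langle\nu\rangle\rangle]$, $[\langle\langle\mu\oplus\nu\rangle\rangle]$ with the pairs $(a_i, b_i)$ there, and verifying the skew-lexicographic comparisons that proposition requires. Proposition~\ref{LastLabel169} will be used to convert $\prec$ on sequences into the numerical $<$ inequalities $a_1>a_2$, $b_1>b_2$, and the identity $\overline{\mu\oplus\nu}=\overline\nu\oplus\overline\mu$ will be used whenever the reversal of a concatenation appears.

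For part~$(a)$, I would pair $(a_1, b_1) = [\langle\langle\mu\oplus\nu\rangle\rangle]$ with $(a_2, b_2) = [\langle\langle\mu\rangle\rangle]$. The ordering of the ``positive'' components is $\langle\mu\rangle\prec\langle\mu\oplus\nu\rangle$, which is the left inequality in Proposition~\ref{proposition1-lex}(i). For the ``negative'' components, applying Proposition~\ref{proposition1-lex}(ii) to the pair $(\overline\mu,\overline\nu)$ (admissible by the standing hypothesis $\langle\overline\mu\rangle\prec\langle\overline\nu\rangle$) yields $\langle\overline\mu\rangle\prec\langle\overline\nu\oplus\overline\mu\rangle=\langle\overline{\mu\oplus\nu}\rangle$. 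Feeding both comparisons into Proposition~\ref{prop123} delivers the sign identity of part~$(a)$.

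For part~$(b)$ the plan is symmetric but with the roles swapped: I would take $(a_1, b_1) = [\langle\langle\nu\rangle\rangle]$ and $(a_2, b_2) = [\langle\langle\mu\oplus\nu\rangle\rangle]$. The required comparisons are $\langle\mu\oplus\nu\rangle\prec\langle\nu\rangle$ and $\langle\overline{\mu\oplus\nu}\rangle=\langle\overline\nu\oplus\overline\mu\rangle\prec\langle\overline\nu\rangle$, which are the right inequalities of Proposition~\ref{proposition1-lex}(i) and~(ii) respectively. Proposition~\ref{prop123} then gives $\sign\big(g_{[\langle\langle\nu\rangle\rangle]}(v)-g_{[\langle\langle\mu\oplus\nu\rangle\rangle]}(v)\big)=\sign(y)$, and reversing the order of subtraction to match the statement produces the extra minus sign. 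The only substantive obstacle is keeping careful track of how $\prec$ propagates through the reversal operation and through the maps $[\cdot]$, $[0;\cdot]$, and unary minus that define the two-sided periodisation; once these bookkeeping steps are made, no new ingredients beyond Propositions~\ref{prop123} and~\ref{proposition1-lex} are needed.
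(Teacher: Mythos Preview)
Your proposal is correct and matches the paper's approach exactly: the paper states this result as a ``direct corollary of Proposition~\ref{prop123}'' with no further detail, and you have supplied precisely the missing bookkeeping---namely, invoking Proposition~\ref{proposition1-lex} (parts~(i) and~(ii), applied both to $(\mu,\nu)$ and to $(\overline\mu,\overline\nu)$) to verify the skew-lexicographic hypotheses that Proposition~\ref{prop123} requires. The only cosmetic point is that the continued-fraction formats in Proposition~\ref{prop123} ($a_i=[0;\ldots]$, $b_i=-[\ldots]$) are swapped relative to the two-sided periodisation ($[\langle\alpha\rangle]$, $-[0;\langle\overline\alpha\rangle]$); since $g_{a,b}=g_{b,a}$ and the proof of Proposition~\ref{prop123} only uses the numerical inequalities $a_1>a_2$, $b_1>b_2$ (via Lemma~\ref{lemma12345}), this causes no difficulty---your remark about converting $\prec$ to $<$ via Proposition~\ref{LastLabel169} handles it.
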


\begin{remark}
Note that 
$$
U_{+,+}^{[\langle\langle\mu\rangle\rangle]}\cap U_{+,+}^{[\langle\langle\mu\nu\rangle\rangle]}=U_{+,+}^{[\langle\langle\mu\rangle\rangle]},
$$
since by Proposition~\ref{LastLabel169} the comparison $\langle\mu\rangle\prec\langle\mu\nu\rangle$ implies that 
$[\langle\mu\rangle]<[\langle\mu\nu\rangle]$.
By the same reason we have
$$
U_{+,+}^{[\langle\langle\nu\rangle\rangle]}\cap U_{+,+}^{[\langle\langle\mu\nu\rangle\rangle]}=U_{+,+}^{[\langle\langle\mu\nu\rangle\rangle]}.
$$
\end{remark}

We now continue with the following important result.

\begin{theorem}\label{(1,0)-Minima-one-sail}
Let $\mu$ and $\nu$ be two evenly-prime sequences of integers.
Assume that
\begin{itemize}

\item $\langle\mu\rangle\prec\langle\nu\rangle$ and $\langle\overline\mu\rangle\prec\langle\overline\nu\rangle$;

\item The global minima of $g_{[\langle\langle\mu\rangle\rangle]}$ at nonzero integer points
of the cone $U_{+,+}^{[\langle\langle\mu\rangle\rangle]}$ is attained at $(1,0)$.

\item The global minima of $g_{[\langle\langle\nu\rangle\rangle]}$ at nonzero integer points
of the cone $U_{+,+}^{[\langle\langle\nu\rangle\rangle]}$ is attained at $(1,0)$.

\end{itemize}

\vspace{1mm}

Then the global minima of $g_{[\langle\langle\mu\nu\rangle\rangle]}$ at nonzero integer points
of the cone $U_{+,+}^{[\langle\langle\mu\nu\rangle\rangle]}$ is attained at $(1,0)$.

This global minimum is the unique global minimum on the sail for $g_{[\langle\langle\mu\nu\rangle\rangle]}$ up to lattice preserving translations of the sail.
\end{theorem}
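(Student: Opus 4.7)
The plan is to show that for every nonzero integer point $v=(x,y)\in U_{+,+}^{[\langle\langle\mu\nu\rangle\rangle]}$ one has $g_{[\langle\langle\mu\nu\rangle\rangle]}(v)\ge 1= g_{[\langle\langle\mu\nu\rangle\rangle]}(1,0)$ by a three-case analysis on the sign of $y$, invoking Corollary~\ref{coro1234} and the hypotheses on $\mu$ and $\nu$. Write $(a_\bullet,b_\bullet)=[\langle\langle\bullet\rangle\rangle]$ for $\bullet\in\{\mu,\nu,\mu\nu\}$. By Proposition~\ref{proposition1-lex} applied to $\mu,\nu$ and to $\bar\mu,\bar\nu$, together with Proposition~\ref{LastLabel169}, the strict skew-lexicographic chain $\langle\mu\rangle\prec\langle\mu\nu\rangle\prec\langle\nu\rangle$ (and its reverse analogue) translates into $a_\mu<a_{\mu\nu}<a_\nu$ and $b_\mu<b_{\mu\nu}<b_\nu$; these are the only facts about the periodisations I will need.

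The case $y=0$ is immediate: the cone forces $x>0$, so $v=(x,0)$ with integer $x\ge 1$, and $g_{\mu\nu}(v)=x^2\ge 1$ with equality exactly at $v=(1,0)$.

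For $y>0$ the first step is to verify that $v\in U_{+,+}^{[\langle\langle\mu\rangle\rangle]}$: the cone inequality $y<x/a_{\mu\nu}$ combined with $a_\mu<a_{\mu\nu}$ (hence $x/a_\mu>x/a_{\mu\nu}$ for $x>0$) gives $y<x/a_\mu$, while $y>0>x/b_\mu$ is automatic. Corollary~\ref{coro1234}(a) then yields the sign comparison between $g_{\mu\nu}(v)$ and $g_\mu(v)$, and combining with the hypothesis $g_\mu(v)\ge 1$ produces $g_{\mu\nu}(v)>1$. The case $y<0$ is handled symmetrically: one checks $v\in U_{+,+}^{[\langle\langle\nu\rangle\rangle]}$, using $b_{\mu\nu}<b_\nu$ to promote the lower bound $y>x/b_{\mu\nu}$ to $y>x/b_\nu$ (and the upper bound $y<x/a_\nu$ is automatic since $y<0$), then applies Corollary~\ref{coro1234}(b) together with the $\nu$-hypothesis to conclude $g_{\mu\nu}(v)>1$.

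For the uniqueness clause, the three cases together prove that inside $U_{+,+}^{[\langle\langle\mu\nu\rangle\rangle]}$ the value $1$ is attained only on the $x$-axis, and among the integer points on the $x$-axis only at $(1,0)$. Since $(1,0)$ is a vertex of the sail (as in the argument of Proposition~\ref{BasicPropertiesReduced}(v), translated to the present coordinate convention), any other integer point of the sail with $g_{\mu\nu}=1$ would have to lie in the orbit of $(1,0)$ under the cyclic group of hyperbolic lattice automorphisms preserving the two eigenlines of the arrangement — that is, under the shift along the sail by one fundamental period. This is exactly the uniqueness statement claimed. The main subtlety I anticipate in writing this out is checking that the two cone-containment reductions $y>0\Rightarrow v\in U_{+,+}^{[\langle\langle\mu\rangle\rangle]}$ and $y<0\Rightarrow v\in U_{+,+}^{[\langle\langle\nu\rangle\rangle]}$ hold precisely as stated, so that Corollary~\ref{coro1234} is applicable; beyond that, the argument is a clean two-sided comparison.
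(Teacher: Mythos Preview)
Your argument is correct and follows essentially the same route as the paper: use Corollary~\ref{coro1234} together with the hypotheses on $g_{[\langle\langle\mu\rangle\rangle]}$ and $g_{[\langle\langle\nu\rangle\rangle]}$ to sandwich $g_{[\langle\langle\mu\nu\rangle\rangle]}(v)$ above $1$. The only organisational difference is that the paper first reduces to sail vertices in one fundamental period (splitting that period into the $\mu$-part, with $y>0$, and the $\overline\nu$-part, with $y<0$), whereas you treat all nonzero integer points of the cone directly via the trichotomy on $\operatorname{sign}(y)$; your version is slightly more elementary since it avoids the sail reduction, but the core inequality chain $g_{\mu\nu}(1,0)=g_\bullet(1,0)\le g_\bullet(v)<g_{\mu\nu}(v)$ (with $\bullet=\mu$ for $y>0$ and $\bullet=\nu$ for $y<0$) is identical.
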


\begin{proof}
From the general theory of periodic continued fractions it follows that
the infimum of $g_{[\langle\langle\mu\nu\rangle\rangle]}$
for the integer points of the cone $U_{+,+}^{[\langle\langle\mu\nu\rangle\rangle]}$
except the origin is
attained at some vertex of the sail for this cone.

\vspace{1mm}

Then every fundamental domain of the sail with respect to the action of
the positive Dirichlet group of $\SL(2,\z)$-operators (which preserves the cone) contains at least one vertex which is a global minimum.
In particular there is a global minimum at the period
which is the union of the sails for
$[\mu]$ and $-[0;\overline\nu]$.
Note that both of these sail parts contain $(1,0)$ as a vertex.

Let $(x,y)\ne(1,0)$ be a vertex of the part of the sail for $[0,\mu]$.
First, note that
$$
(x,y)\in U_{+,+}^{[\langle\langle\mu\rangle\rangle]}\cap U_{+,+}^{[\langle\langle\mu\nu\rangle\rangle]}=
U_{+,+}^{[\langle\langle\mu\rangle\rangle]}.
$$
Secondly, $y>0$, since the part of the sail for $[0,\mu]$ except $(1,0)$
is contained in the halfplane $y>0$.

Then we have
$$
g_{[\langle\langle\mu\nu\rangle\rangle]}(1,0)=
g_{[\langle\langle\mu\rangle\rangle]}(1,0)<
g_{[\langle\langle\mu\rangle\rangle]}(x,y)<
g_{[\langle\langle\mu\nu\rangle\rangle]}(x,y).
$$
The first equality and the third inequality hold by Proposition~\ref{coro1234} (since $y>0$).
The second inequality holds by the assumption of the theorem.

Similarly for the case of $(x,y)\ne(1,0)$ being a vertex in the part of the sail for $[\overline{\beta}]$ we have
$$
g_{[\langle\langle\mu\nu\rangle\rangle]}(1,0)=
g_{[\langle\langle\nu\rangle\rangle]}(1,0)<
g_{[\langle\langle\nu\rangle\rangle]}(x,y)<
g_{[\langle\langle\mu\nu\rangle\rangle]}(x,y)
$$
(here $y<0$).

Therefore $(1,0)$ is a global minimum of the sail period
which is the union of sails for $-[0;\overline{\nu}]$
and $[\mu]$.

\vspace{2mm}

Since all inequalities are strict, we have the uniqueness of global minimum at each period of the sail.
\end{proof}

\begin{corollary}\label{(1,0)-Minima}
Let $\mu$ and $\nu$ be two evenly-prime sequences of integers.
Assume that

\begin{itemize}
\item $\langle\mu\rangle\prec\langle\nu\rangle$ and $\langle \overline\mu\rangle\prec\langle\overline\nu\rangle$;

\item $\langle\mu\nu\rangle$ is evenly palindromic;

\item the global minima of $g_{[\langle\langle\mu\rangle\rangle]}$ at nonzero integer points
of the cone $U_{[\langle\langle\mu\rangle\rangle]}$ is attained at $(1,0)$.

\item the global minima of $g_{[\langle\langle\nu\rangle\rangle]}$ at nonzero integer points
of the cone $U_{+,+}^{[\langle\langle\nu\rangle\rangle]}$ is attained at $(1,0)$.
\end{itemize}

\vspace{1mm}

Then the global minima of $g_{[\langle\langle\mu\nu\rangle\rangle]}$ at nonzero integer points is attained at $(1,0)$.

This global minimum is the unique global minimum on the sail for $g_{[\langle\langle\mu\nu\rangle\rangle]}$ up to lattice preserving translations of the sail.
\end{corollary}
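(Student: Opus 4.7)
The plan is to leverage Theorem~\ref{(1,0)-Minima-one-sail}, which already establishes that $(1,0)$ is the unique (up to sail translations) minimum of $g_{[\langle\langle\mu\nu\rangle\rangle]}$ on nonzero integer points of the positive cone $U_{+,+}^{[\langle\langle\mu\nu\rangle\rangle]}$, and promote this to a global statement via the Perron Identity together with the evenly palindromic hypothesis. By the central symmetry $v\mapsto -v$, which preserves $g$, the same minimum value is attained at $(-1,0)$ on the opposite positive cone $U_{-,-}^{[\langle\langle\mu\nu\rangle\rangle]}$. What remains is to show the lower bound $|g_{[\langle\langle\mu\nu\rangle\rangle]}|\ge g_{[\langle\langle\mu\nu\rangle\rangle]}(1,0)$ on integer points of the two negative cones, where $g$ takes the opposite sign.

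By the Perron Identity (Theorem~\ref{Perron}),
\[
\inf_{\Z^2\setminus\{(0,0)\}}\bigl|g_{[\langle\langle\mu\nu\rangle\rangle]}\bigr|=\frac{\sqrt{\Delta}}{\sup_{i\in\Z}V_i},\qquad V_i:=a_i+[0;a_{i+1}:a_{i+2}:\ldots]+[0;a_{i-1}:a_{i-2}:\ldots],
\]
where $(a_i)_{i\in\Z}$ is the doubly-infinite LLS sequence of the arrangement, i.e.\ the periodisation of $\mu\nu$. The key interpretation is that, by the duality between adjacent cones (integer lengths in one sail coincide with integer sines in the neighbouring sail, and vice versa), the single quantity $V_i$ encodes $|g|/\sqrt{\Delta}$ at a vertex of the positive-cone sail when $i$ is of one parity, and at a vertex of the adjacent negative-cone sail when $i$ is of the other parity. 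Thus the Perron supremum over all integers $i$ captures minima of $|g|$ across both sail families at once.

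The evenly palindromic condition $a_{k+m\bmod 2n}=a_{k-m-1\bmod 2n}$ upgrades directly to $V_{k+m}=V_{k-m-1}$, since the central entry and both tail continued fractions are sent to their mirror counterparts. Crucially, $k+m$ and $k-m-1$ always have \emph{opposite} parities, so the palindromic mirror pairs every positive-cone-sail index with a negative-cone-sail index. It follows that $\sup_{i\text{ odd}}V_i=\sup_{i\text{ even}}V_i=\sup_{i\in\Z}V_i$. Theorem~\ref{(1,0)-Minima-one-sail} identifies this supremum on the positive-cone side as the value $V_{i_0}$ attained at $(1,0)$, so it is also the global supremum; consequently $(1,0)$ realises the global minimum of $|g_{[\langle\langle\mu\nu\rangle\rangle]}|$ on $\Z^2\setminus\{(0,0)\}$. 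Uniqueness on the sail containing $(1,0)$ then follows directly from the uniqueness clause of Theorem~\ref{(1,0)-Minima-one-sail}, since the palindromic identification occurs between distinct (positive- versus negative-cone) sails and introduces no extra ties within a single sail.

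The main obstacle is rigorously establishing the parity/duality correspondence between the Perron indices $V_i$ and the vertices of the two sail families --- confirming that odd and even values of $i$ indeed encode values on the positive-cone and negative-cone sails respectively. This requires unpacking the Perron Identity alongside the standard $\SL(2,\Z)$-duality of the LLS sequences of adjacent angles. Once this correspondence is fixed, both the identity $V_{k+m}=V_{k-m-1}$ and the pairing argument follow routinely.
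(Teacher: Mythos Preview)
Your approach is correct and reaches the same conclusion, but it takes a somewhat different route from the paper. The paper argues purely geometrically: since $\langle\mu\nu\rangle$ is evenly palindromic, the sails of the adjacent (dual) cones for $g_{[\langle\langle\mu\nu\rangle\rangle]}$ have the same LLS sequence and are therefore integer congruent (invoking \cite[Theorem~4.10]{oleg1}); hence every cone attains the same minimum value of $|g|$, and Theorem~\ref{(1,0)-Minima-one-sail} finishes. Your argument recasts this in analytic form via the Perron Identity, observing that the palindromic symmetry $a_{k+m}=a_{k-m-1}$ forces $V_{k+m}=V_{k-m-1}$ with the two indices of opposite parity, so the suprema over the two parity classes coincide. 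The identity $V_{k+m}=V_{k-m-1}$ is indeed immediate from the palindrome condition, and the opposite-parity observation is exactly what encodes, on the Perron side, the paper's statement that the dual cones are integer congruent.

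What you flag as the ``main obstacle'' --- that the parity of $i$ in $V_i$ distinguishes vertices of the positive-cone sail from vertices of the adjacent-cone sail --- is precisely the content the paper sidesteps by citing the integer-congruence theorem for sails with identical LLS sequences. Your route is more self-contained within the Perron framework of the paper, at the cost of having to unpack that duality explicitly; the paper's route is shorter but imports an external result. Either way, once the two cone families are seen to share the same set of $|g|$-values on integer points, the conclusion and the uniqueness clause follow from Theorem~\ref{(1,0)-Minima-one-sail} exactly as you say.
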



\begin{proof}
From the general theory of periodic continued fractions it follows that
the infimum of $g_{[\langle\langle\mu\nu\rangle\rangle]}$ for the points $\z^2\setminus (0,0)$ is
attained at some vertex of the sail of one of the cones in the complement to
$g_{[\langle\langle\mu\nu\rangle\rangle]}=0$.

Since the LLS of $g_{[\langle\langle\mu\nu\rangle\rangle]}$ is evenly palindromic,
the sails of dual cones for $g_{[\langle\langle\mu\nu\rangle\rangle]}$ are integer congruent (see Definition~\ref{IntCong}).
Hence by~\cite[Theorem 4.10]{oleg1} the cones are integer congruent,
and therefore every cone for $g_{[\langle\langle\mu\nu\rangle\rangle]}$ contains global minima
of $|g_{[\langle\langle\mu\nu\rangle\rangle]}|$ (i.e. a Markov minimum).

\vspace{2mm}

Now both statements of Corollary~\ref{(1,0)-Minima}
follows directly from Theorem~\ref{(1,0)-Minima-one-sail}.

\end{proof}

\begin{example}
It is rather simple to see that the conditions of the corollary are very important.
For instance,
$$
\mu=(1,1,2,2,2,2) \qquad \hbox{and} \qquad \nu=(1,1,2,2)
$$
fulfils all the conditions except the fact that
$\langle\overline \mu\rangle\succ \langle\overline\nu\rangle$.
Then the form corresponding to $\mu\oplus\nu$ will not have a global minimum at $(1,0)$.
Indeed, set
$$
\xi=\mu\oplus\nu=(1,1,2,2,2,2,1,1,2,2),
$$
then
$$
g_{[\langle\langle\xi\rangle\rangle]}(x,y)=x^2+\frac{787}{437}xy-\frac{611}{437}y^2.
$$
We have
$$
g_{[\langle\langle\xi\rangle\rangle]}(1,0)=1, \quad \hbox{and} \quad
g_{[\langle\langle\xi\rangle\rangle]}(17,29)=\frac{433}{437}<1.
$$
\end{example}

\begin{corollary}\label{(1,0)-Minima-main}
Let $\G_\oplus(\mu,\nu)$ be a Markov LLS triple-graph.
Then the following are true

\begin{itemize}
\item For every element of every triple of $\G_\oplus(\mu,\nu)$ the
corresponding form attains a Markov minimum
precisely at $(1,0)$.

\item This Markov minimum is the unique Markov minimum on the sail for the form up to lattice preserving translations of the sail.
\end{itemize}
\end{corollary}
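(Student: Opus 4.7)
The plan is to induct on the depth of a triple within the tree $\G_\oplus(\mu,\nu)$, carrying the following invariant at every level: for each triple $(\alpha,\beta,\gamma)$ present, (i) each of $\alpha$, $\beta$, $\gamma$ is evenly-prime, (ii) $\langle\alpha\rangle\prec\langle\gamma\rangle$ and $\langle\overline\alpha\rangle\prec\langle\overline\gamma\rangle$, and (iii) the reduced form associated to each of $\alpha,\beta,\gamma$ attains a Markov minimum at $(1,0)$, uniquely on its sail up to the positive Dirichlet group of the sail. Clauses (i) and (ii) are only there to feed the hypotheses of Proposition~\ref{proposition1-lex} and Corollary~\ref{(1,0)-Minima} in the inductive step; clause (iii) is the statement to be proved. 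Once the invariant is verified at every level, both bullet points of the corollary follow.

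For the base case, consider the root triple $(\mu,\mu\oplus\nu,\nu)$. The two outer sequences satisfy (i)--(iii) directly by the definition of an almost Markov LLS triple-graph. For the new middle sequence $\mu\oplus\nu$, I invoke Corollary~\ref{(1,0)-Minima}: its four hypotheses are precisely the conditions making $\G_\oplus(\mu,\nu)$ a Markov LLS triple-graph, with the evenly palindromic clause (absent in the almost Markov case) used exactly here to upgrade ``one sail'' to ``all sails''.

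For the inductive step, I verify the invariant for $L_\oplus(\alpha,\beta,\gamma)=(\alpha,\alpha\oplus\beta,\beta)$; the case of $R_\oplus$ is symmetric. Only the new middle element $\alpha\oplus\beta$ requires work. Writing $\beta=\alpha\oplus\gamma$, the strict ordering $\langle\alpha\rangle\prec\langle\beta\rangle$ follows from the invariant $\langle\alpha\rangle\prec\langle\gamma\rangle$ and Proposition~\ref{proposition1-lex}(i), while $\langle\overline\alpha\rangle\prec\langle\overline\beta\rangle=\langle\overline\gamma\oplus\overline\alpha\rangle$ follows from $\langle\overline\alpha\rangle\prec\langle\overline\gamma\rangle$ and Proposition~\ref{proposition1-lex}(ii). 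The Markov-minimum property at $(1,0)$ for $\alpha$ and $\beta$ individually is the inductive hypothesis, and the evenly palindromic condition on $\alpha\oplus\beta$ is built into the definition of a Markov LLS triple-graph. Corollary~\ref{(1,0)-Minima} then delivers clause (iii) for $\alpha\oplus\beta$, and the new ordering clauses in (ii) have just been checked.

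The main obstacle I foresee is the evenly-prime bookkeeping: Proposition~\ref{proposition1-lex} requires evenly-prime inputs, so I must show that a concatenation $\alpha\oplus\beta$ of two evenly-prime sequences arising in a common Markov triple-graph is again evenly-prime. I expect this to follow from the strict inequalities $\langle\alpha\rangle\prec\langle\alpha\oplus\beta\rangle\prec\langle\beta\rangle$ (any nontrivial even period of $\alpha\oplus\beta$ would force equality with one of the endpoints) together with the tree structure of $\G_\oplus(\mu,\nu)$ recorded in Corollary~\ref{FareyUni}, but the argument must be made explicit rather than invoked tacitly. Given this, the uniqueness of the Markov minimum on the sail is inherited verbatim from the corresponding clause of Corollary~\ref{(1,0)-Minima}, and the induction closes.
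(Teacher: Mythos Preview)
Your approach is the paper's: induct on depth in the tree and apply Corollary~\ref{(1,0)-Minima} at each new middle element. The paper's own proof is a single line (``inductively apply Corollary~\ref{(1,0)-Minima} to all triples of $\G_\oplus(\mu,\nu)$''), so your explicit invariant and the ordering checks via Proposition~\ref{proposition1-lex} already exceed its level of detail.

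The evenly-prime concern you flag is genuine, but it is not a divergence between your argument and the paper's: the paper leaves it equally implicit. Proposition~\ref{proposition1-lex}, Lemma~\ref{lemma2-lex}, and Corollary~\ref{(1,0)-Minima} are all stated for evenly-prime inputs, and the paper applies them at every node of the tree without ever verifying that concatenation within $\G_\oplus(\mu,\nu)$ preserves evenly-primeness. So the ``obstacle'' you foresee is a point both proofs leave to the reader rather than a gap peculiar to your route.
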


\begin{proof}
Recall that if $\G_\oplus(\mu,\nu)$
is a Markov LLS triple-graph then $\mu$ and $\nu$ are evenly-prime sequences of integers that satisfy
\begin{itemize}

\item $\langle\mu\rangle\prec(\nu)$ and $\langle\overline\mu\rangle\prec\langle\overline\nu\rangle$;

\item the global minima of $g_{[\langle\langle\mu\rangle\rangle]}$ at nonzero integer points
of the cone $U_{+,+}^{[\langle\langle\mu\rangle\rangle]}$ is attained at $(1,0)$;

\item the global minima of $g_{[\langle\langle\nu\rangle\rangle]}$ at nonzero integer points
of the cone $U_{+,+}^{[\langle\langle\nu\rangle\rangle]}$ is attained at $(1,0)$;

\item all the periodic LLS sequences associated to sequences of $\G_\oplus(\mu,\nu)$ are evenly palindromic.
\end{itemize}
\vspace{1mm}

Now we inductively apply Corollary~\ref{(1,0)-Minima} to all triples of $\G_\oplus(\mu,\nu)$. This concludes the proof.
\end{proof}

\begin{remark}
Here we should note that the first three conditions for Markov LLS triple-graphs are on the first elements
$\mu$ and $\nu$, while the palindrome condition is for the whole triple-graph $\G(\mu,\nu)$.
The last is usually rather hard to check.
\end{remark}

\begin{remark}
Note that the property of Corollary~\ref{(1,0)-Minima-main} is of main importance for the diagram of Figure~\ref{diagram.5} as it clarifies the maps $W$, $X$,
and $Z$ (see Remark~\ref{crutial-one}).
\end{remark}

\begin{remark}
For almost Markov triple-graphs a similar statement holds for the sails containing the point $(1,0)$.
\end{remark}

\subsection{Recurrence relation for extended Markov trees}
\label{Recurrence relation for extended Markov trees}

In this section we prove the Expression~(\ref{JustSigma})
for the operation $\tilde \Sigma$ in the definition of
$\G_\otimes(\mu,\nu)$.

We start with the following
general result on integer sines of periods.
Recall that we denote the concatenation of sequences
$\alpha$ and $\beta$ by $\alpha\beta$, and that
we use the following notation
$$
		\breve K(d_1,\ldots,d_s)=K(d_1,\ldots,d_{s-1}).
$$

\begin{theorem}\label{cab prop}
Let $\alpha$, $\beta$, and $\gamma$ be the following sequences of positive integers
$$
                \begin{aligned}
                \alpha&=(a_1,\ldots,a_{2n}),\\
                \lambda&=(b_1,\ldots,b_{l}),\\
                \rho&=(c_1,\ldots,c_{r}),
                \end{aligned}
$$
with $n$, $m$, and $r\in\Z_+$. Then we have that
\begin{equation}\label{cab eq}
\frac{\breve K(\alpha^2)}{\breve K(\alpha)}=\frac{\breve K(\lambda\alpha^2\rho)+
\breve K(\lambda\rho)}{\breve K(\lambda\alpha\rho)}.
\end{equation}
\end{theorem}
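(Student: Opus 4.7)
My plan is to encode continuants as entries of a matrix product and reduce the identity to the Cayley--Hamilton theorem applied to a single $2\times 2$ matrix.

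First, I would associate to any finite sequence $\xi=(x_1,\ldots,x_m)$ the product
$$
N_\xi=\prod_{i=1}^m\begin{pmatrix} x_i & 1 \\ 1 & 0\end{pmatrix},
$$
and observe by an easy induction on $m$ that
$$
N_\xi=\begin{pmatrix} K_1^m(\xi) & K_1^{m-1}(\xi) \\ K_2^m(\xi) & K_2^{m-1}(\xi)\end{pmatrix}.
$$
In particular $(N_\xi)_{1,2}=\breve K(\xi)$, and $N_{\xi\eta}=N_\xi N_\eta$ under concatenation. Setting $A=N_\alpha$, $L=N_\lambda$, $R=N_\rho$, this identifies every continuant appearing in the theorem as a $(1,2)$-entry: $\breve K(\lambda\alpha^2\rho)=(LA^2R)_{1,2}$, $\breve K(\lambda\alpha\rho)=(LAR)_{1,2}$, $\breve K(\lambda\rho)=(LR)_{1,2}$, and $\breve K(\alpha^2)=(A^2)_{1,2}$.

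Next I would use that $\alpha$ has even length $2n$: since each elementary factor has determinant $-1$, one has $\det A=(-1)^{2n}=+1$. The Cayley--Hamilton theorem then gives
$$
A^2=(\trace A)\,A-I,
$$
and multiplying on the left by $L$ and on the right by $R$ yields
$$
LA^2R+LR=(\trace A)\cdot LAR.
$$
Reading the $(1,2)$-entries of both sides produces
$$
\breve K(\lambda\alpha^2\rho)+\breve K(\lambda\rho)=(\trace A)\cdot\breve K(\lambda\alpha\rho),
$$
which is the claimed identity provided $\trace A=\breve K(\alpha^2)/\breve K(\alpha)$.

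This last identification is a one-line two-by-two computation. Writing $A=\begin{pmatrix}a & p\\ q & s\end{pmatrix}$ with $p=\breve K(\alpha)$, the off-diagonal entry of $A^2$ factors as $(A^2)_{1,2}=p(a+s)=\breve K(\alpha)\cdot\trace(A)$, while by the first step $(A^2)_{1,2}=\breve K(\alpha^2)$. I do not expect any serious obstacle; the content of the proof is the recognition that this asymmetric-looking identity is Cayley--Hamilton in disguise, with evenness of $|\alpha|$ being exactly what is needed to produce the clean $+$ sign in front of $\breve K(\lambda\rho)$ (odd length would flip this sign, since $\det A$ would equal $-1$).
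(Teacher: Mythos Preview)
Your proof is correct and is genuinely different from the paper's. The paper first establishes the trace identity $\breve K(\alpha^2)/\breve K(\alpha)=K_1^{2n}(\alpha)+K_2^{2n-1}(\alpha)$ (your final step, their Lemma~\ref{int rec rel}), then attacks the main equation by repeatedly applying the continuant splitting rule $K_1^n=K_1^kK_{k+1}^n+K_1^{k-1}K_{k+2}^n$ (their Lemma~\ref{cont rel}), substituting four separate instances and simplifying through several intermediate equations until everything collapses. Your approach replaces all of that algebra with a single application of Cayley--Hamilton to $A=N_\alpha$: since $\det A=1$ by evenness, $A^2-(\trace A)A+I=0$, and sandwiching by $L,R$ and reading the $(1,2)$-entry gives the identity in one line. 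The paper's route is more hands-on with continuants and shows explicitly how the terms cancel pairwise; yours is shorter, explains conceptually \emph{why} the identity holds, and makes transparent the role of the even-length hypothesis (the sign in front of $\breve K(\lambda\rho)$ is literally $\det A$). Both proofs ultimately rest on the same matrix encoding of continuants---the paper even proves Lemma~\ref{cont rel} via a matrix product---but you exploit that encoding fully rather than returning to scalar continuant identities.
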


	For the proof we need the following two lemmas.

\begin{lemma}{\bf(R.L.~Graham, et al.~\cite{conc}.)} \label{cont rel}
		Let $(a_1,\ldots,a_n)$ be an arbitrary sequence
and $1\leq k\leq n$. Then we have
$$
K_1^n(\alpha)=K_1^k(\alpha)K_{k+1}^n(\alpha)+K_1^{k-1}(\alpha)K_{k+2}^n(\alpha).
$$
\end{lemma}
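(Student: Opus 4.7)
The plan is to prove this classical continuant identity via the matrix representation of continuants already developed in Proposition~\ref{BasicPropertiesReduced}(i), exploiting the fact that associativity of matrix multiplication translates directly into a splitting rule for continuants. Although the proposition as stated in the paper is formulated for positive integer sequences, its proof uses only the defining recursion of $K_n$, so the matrix identity extends verbatim to arbitrary real sequences.

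First I would record the following shifted version of Proposition~\ref{BasicPropertiesReduced}(i): for any $1\le i\le j\le n$, applying the proposition to the subsequence $(a_i,\ldots,a_j)$ and translating indices back to $\alpha=(a_1,\ldots,a_n)$ yields
\[
\prod_{l=i}^{j}\begin{pmatrix}0&1\\1&a_l\end{pmatrix}
=\begin{pmatrix}K_{i+1}^{j-1}(\alpha)&K_{i+1}^{j}(\alpha)\\[2pt]K_{i}^{j-1}(\alpha)&K_{i}^{j}(\alpha)\end{pmatrix},
\]
with the usual conventions $K_{s}^{s-1}=1$ and $K_{s}^{s-2}=0$ needed so that the formula remains valid at the boundary (in particular when $k=n$ or $k=1$ in what follows). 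This step is a bookkeeping exercise; the induction in the proof of Proposition~\ref{BasicPropertiesReduced}(i) goes through unchanged.

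Next I would split the product of the $n$ matrices at position $k$:
\[
\prod_{l=1}^{n}\begin{pmatrix}0&1\\1&a_l\end{pmatrix}
=\left(\prod_{l=1}^{k}\begin{pmatrix}0&1\\1&a_l\end{pmatrix}\right)\!\left(\prod_{l=k+1}^{n}\begin{pmatrix}0&1\\1&a_l\end{pmatrix}\right).
\]
Expanding both sides via the displayed formula and reading off the bottom-right entry gives
\[
K_{1}^{n}(\alpha)=K_{1}^{k}(\alpha)\,K_{k+1}^{n}(\alpha)+K_{1}^{k-1}(\alpha)\,K_{k+2}^{n}(\alpha),
\]
which is exactly the claim.

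I do not expect a genuine obstacle here; the only point requiring some care is the boundary cases $k=1$ and $k=n$, which are handled by the conventions $K_{n+1}^{n}=1$, $K_{n+2}^{n}=0$, $K_{1}^{0}=1$. As an alternative route, one could also prove the identity by direct induction on $n-k$ using the recursion $K_{1}^{n}=a_n K_{1}^{n-1}+K_{1}^{n-2}$, but the matrix-factorization argument is shorter and fits naturally with the machinery already established in Section~\ref{Reduced matrices}.
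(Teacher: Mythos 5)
Your proof is correct and is essentially the paper's own argument: the paper splits the sequence at position $k$, invokes the multiplicativity $M_{\alpha\beta}=M_\alpha\cdot M_\beta$ from Proposition~\ref{BasicPropertiesReduced}(ii) (itself a corollary of the matrix factorization in item (i)), and reads off the lower-right entry of the product, exactly as you do. Your explicit attention to the boundary conventions and to extending the matrix identity from positive integer sequences to arbitrary real ones is a small tidiness bonus the paper leaves implicit, but it does not constitute a different route.
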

	
\begin{proof}
Let
$$
               \begin{aligned}
                \alpha&=(a_1,\ldots,a_{k}),\\
                \beta&=(a_{k+1},\ldots,a_{n}).
                \end{aligned}
$$
By Proposition~\ref{BasicPropertiesReduced}$($ii$)$ we have
\begin{equation}\label{zzz678}
M_{\alpha \beta}=M_\alpha \cdot M_\beta
\end{equation}
Now by the explicit definition of $M_\alpha$, $M_\beta$, and $M_{\alpha \beta}$, the statement of lemma is the equation
for the lower right element in Equation~(\ref{zzz678}).
\end{proof}

Now we give the proof of Theorem \ref{cab prop}.

\begin{lemma}\label{int rec rel}
For a sequence of positive integers
$\alpha=(a_1,\ldots,a_n)$
the value
$$
\frac{\breve K(\alpha^2)}{\breve K (\alpha)}=K_1^{n}(\alpha)+K_2^{n-1}(\alpha).
$$
(In particular the ratio is a positive integer.)
\end{lemma}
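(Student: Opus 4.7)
The plan is to reduce the identity to a single application of the continuant splitting formula of Lemma~\ref{cont rel}, with the split placed exactly at the seam between the two copies of $\alpha$ in $\alpha^2$.

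First I would unwind the definitions. Since $\alpha^2$ has length $2n$, the notation $\breve K(\alpha^2)$ means $K_1^{2n-1}(\alpha^2)$, that is, the continuant built from the first $2n-1$ entries of $(a_1,\ldots,a_n,a_1,\ldots,a_n)$. I would apply Lemma~\ref{cont rel} to this sequence of length $2n-1$, splitting at $k=n$:
\[
K_1^{2n-1}(\alpha^2)=K_1^{n}(\alpha^2)\,K_{n+1}^{2n-1}(\alpha^2)+K_1^{n-1}(\alpha^2)\,K_{n+2}^{2n-1}(\alpha^2).
\]

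Next I would exploit the periodicity of $\alpha^2$ to rewrite each partial continuant on the right as a partial continuant of $\alpha$ itself. Explicitly, the first $n$ entries of $\alpha^2$ are $a_1,\ldots,a_n$, while entries $n+1,\ldots,2n-1$ are $a_1,\ldots,a_{n-1}$, so
\[
K_1^n(\alpha^2)=K_1^n(\alpha),\quad K_1^{n-1}(\alpha^2)=K_1^{n-1}(\alpha),\quad K_{n+1}^{2n-1}(\alpha^2)=K_1^{n-1}(\alpha),\quad K_{n+2}^{2n-1}(\alpha^2)=K_2^{n-1}(\alpha).
\]
Substituting these into the display and factoring out $K_1^{n-1}(\alpha)=\breve K(\alpha)$ gives
\[
\breve K(\alpha^2)=K_1^{n-1}(\alpha)\bigl(K_1^n(\alpha)+K_2^{n-1}(\alpha)\bigr),
\]
so dividing by $\breve K(\alpha)$ yields the claimed identity. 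Positivity of the ratio (and hence the parenthetical claim that it is a positive integer) is then immediate because $K_1^n(\alpha)$ and $K_2^{n-1}(\alpha)$ are continuants of sequences of positive integers and so are themselves positive integers.

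There is essentially no obstacle here once the right splitting point is chosen: the entire proof is bookkeeping with the index shifts in the partial continuants. The only subtlety is making sure that the indices $n+1$ and $n+2$ used in $K_{n+1}^{2n-1}$ and $K_{n+2}^{2n-1}$ correspond, under the periodic structure of $\alpha^2$, to the starting indices $1$ and $2$ of $\alpha$; this is a short index-translation check, which I would do explicitly before concatenating the computation above.
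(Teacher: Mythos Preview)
Your proof is correct and follows exactly the same approach as the paper: apply Lemma~\ref{cont rel} to $K_1^{2n-1}(\alpha^2)$ with the split at $k=n$, identify each resulting partial continuant of $\alpha^2$ with one of $\alpha$, and factor out $K_1^{n-1}(\alpha)=\breve K(\alpha)$. Your write-up is in fact cleaner than the paper's, which contains a couple of index typos (it writes $K_2^{n-1}$ in two places where $K_1^{n-1}$ is meant).
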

	
\begin{proof}
By Lemma~\ref{cont rel} applied to
$\alpha=\beta=(a_1,\ldots,a_n)$ we have
$$
\begin{aligned}
\frac{\breve K(\alpha^2)}{\breve K (\alpha)}
&=
\frac{K_1^{n}(\alpha^2)K_{n+1}^{2n-1}(\alpha^2)+K_2^{n-1}(\alpha^2)
K_{n+2}^{2n-1}(\alpha^2)  }{K_2^{n-1}(\alpha)}
\\
&=
\frac{K_1^{n}(\alpha)K_{1}^{n-1}(\alpha)+K_2^{n-1}(\alpha)K_{2}^{n-1}(\alpha)  }{K_2^{n-1}(\alpha)}
=
K_1^{n}(\alpha)+K_2^{n-1}(\alpha).
\end{aligned}
$$		
\end{proof}

\begin{remark}
Note that the definitions we have the following interesting property:
$$
\trace(M_\alpha)=K_1^n(\alpha)+K_2^{n-1}(\alpha).
$$
It plays an important role in the theory of Cohn matrices for the classical Markov tree (e.g. see in~\cite{Aigner1989}).
\end{remark}

	\begin{proof}[Proof of Theorem \ref{cab prop}]
From Lemma~\ref{int rec rel} Equation~(\ref{cab eq}) is equivalent to
$$
K(\alpha)+K_2^{n-1}(\alpha)=\frac{\breve K(\lambda\alpha^2\rho)+\breve K(\lambda\rho)}{\breve K(\lambda\alpha\rho)}.
$$
Therefore, we have
		
\begin{equation} \label{cab int. eq 1}
\breve K(\lambda\alpha\rho) K(\alpha)+\breve K(\lambda\alpha\rho)K_2^{n-1}(\alpha)-
\breve K(\lambda\alpha^2\rho)-\breve K(\lambda\rho)=0.
\end{equation}
		We use the following relations in our proof, coming from
Lemma~\ref{cont rel}:
\begin{equation} \label{cab eq 1}
		\breve K(\lambda\alpha^2\rho)=
            K(\lambda\alpha)\breve K(\alpha\rho)+\breve K(\lambda\alpha)
              K_2^{n+r-1}(\alpha\rho);
\end{equation}
\begin{equation} \label{cab eq 2}
		\breve K(\lambda\alpha\rho)=K(\lambda)\breve K(\alpha\rho)+
                   \breve K(\lambda)K_2^{n+r-1}(\alpha\rho);
\end{equation}
\begin{equation} \label{cab eq 3}
		\breve K(\lambda\alpha)=
          K(\lambda)\breve K(\alpha)+\breve K(\lambda)K_2^{n-1}(\alpha);
\end{equation}
\begin{equation} \label{cab eq 4}
		K(\lambda\alpha)=
          K(\lambda)K(\alpha)+\breve K(\lambda)K_2^{n}(\alpha);
\end{equation}
Substituting Equations~(\ref{cab eq 1}), (\ref{cab eq 2}),  and~(\ref{cab eq 3}), to Equation (\ref{cab int. eq 1}) we get
$$
\begin{array}{l}
\breve K(\lambda\alpha\rho) K(\alpha)+
\Big(K(\lambda)\breve K(\alpha\rho)+
                   \breve K(\lambda)K_2^{n+r-1}(\alpha\rho)\Big)K_2^{n-1}(\alpha)-\\
\Big(
 K(\lambda\alpha)\breve K(\alpha\rho)+
 \Big(K(\lambda)\breve K(\alpha)+\breve K(\lambda)K_2^{n-1}(\alpha)\Big)K_2^{n+r-1}(\alpha\rho)
 \Big)
\\
-\breve K(\lambda\rho)=0.
\end{array}
$$
After cancelling reciprocal terms we have
\begin{equation} \label{cab int. eq 2}
\begin{array}{l}
\breve K(\lambda\alpha\rho) K(\alpha)+
K(\lambda)\breve K(\alpha\rho)K_2^{n-1}(\alpha)-
\\ K(\lambda\alpha)\breve K(\alpha\rho)-
 K(\lambda)\breve K(\alpha)K_2^{n+r-1}(\alpha\rho)
-\breve K(\lambda\rho)=0.
\end{array}
\end{equation}
		Now substituting Equations~(\ref{cab eq 2}) and~(\ref{cab eq 4})
to Equation~(\ref{cab int. eq 2}) we obtain
\begin{equation} \label{cab int. eq 3}
\begin{array}{l}
\Big(K(\lambda)\breve K(\alpha\rho){+}
                   \breve K(\lambda)K_2^{n+r-1}(\alpha\rho)\Big)
K(\alpha)+
K(\lambda)\breve K(\alpha\rho)K_2^{n-1}(\alpha)-
\\
\Big(K(\lambda)K(\alpha)+\breve K(\lambda)K_2^{n}(\alpha)\Big)
\breve K(\alpha\rho)-
 K(\lambda)\breve K(\alpha)K_2^{n+r-1}(\alpha\rho)
-
\\
\breve K(\lambda\rho)=0.
\end{array}
\end{equation}
After cancelling reciprocal terms we have
$$
\begin{array}{l}
\breve K(\lambda)K_2^{n+r-1}(\alpha\rho)
K(\alpha)+
K(\lambda)\breve K(\alpha\rho)K_2^{n-1}(\alpha)-
\\
\breve K(\lambda)K_2^{n}(\alpha)
\breve K(\alpha\rho)-
 K(\lambda)\breve K(\alpha)K_2^{n+r-1}(\alpha\rho)
-
\breve K(\lambda\rho)=0.
\end{array}
$$
The last equation is equivalent to
\begin{equation}\label{cab int. eq 4}
\begin{array}{l}
K(\lambda)
\Big(
\breve K(\alpha\rho)K_2^{n-1}(\alpha)
-\breve K(\alpha)K_2^{n+r-1}(\alpha\rho)
\Big)+
\\
\breve K(\lambda)
\Big(
K_2^{n+r-1}(\alpha\rho)
K(\alpha)-K_2^{n}(\alpha)
\breve K(\alpha\rho)
\Big)-
\\
\breve K(\lambda\rho)=0.

\end{array}
\end{equation}
Let us study the expressions in the brackets of the last equation.
First of all, we have
$$
\begin{array}{l}
\breve K(\alpha\rho)K_2^{n-1}(\alpha)-\breve K(\alpha)K_2^{n+r-1}(\alpha\rho)=
\\
\Big(K(\alpha)\breve K(\rho)+\breve K(\alpha)K_2^{r-1}(\rho)\Big)K_2^{n-1}(\alpha)-
\\
\qquad
\breve K(\alpha)\Big(
 K_2^n(\alpha)\breve K(\rho)+K_2^{n-1}(\alpha)K_2^{r-1}(\rho)
\Big)
=
\\
\Big(
K(\alpha)K_2^{n-1}(\alpha)
-
\breve K(\alpha)
 K_2^n(\alpha)
\Big)
\breve K(\rho)=
\\
\breve K(\rho).
\end{array}
$$
Secondly, it holds that
$$
\begin{array}{l}
K_2^{n+r-1}(\alpha\rho)
K(\alpha)-K_2^{n}(\alpha)
\breve K(\alpha\rho)=\\
\Big(
 K_2^n(\alpha)\breve K(\rho)+K_2^{n-1}(\alpha)K_2^{r-1}(\rho)
\Big)
K(\alpha)-
\\
\qquad
K_2^{n}(\alpha)
\Big(K(\alpha)\breve K(\rho)+\breve K(\alpha)K_2^{r-1}(\rho)\Big)=\\
		(K(\alpha)K_2^{n-1}(\alpha)-K_2^n(\alpha)\breve K(\alpha))K_2^{r-1}(\rho)=\\
		=K_2^{r-1}(\rho),
\end{array}
$$
		Using this, Equation (\ref{cab int. eq 4}) becomes
$$
		K(\lambda)\breve K(\rho)+\breve K(\lambda)K_2^{r-1}(\rho)-\breve K(\lambda\rho)=0,
$$
		which is true, by Lemma~\ref{cont rel}.
	\end{proof}

Let us finally reformulate the above theorem
in terms of the map $\tilde\Sigma$.	

\begin{corollary} \label{main cor}
Let $a=\breve K(\alpha)$, $b=\breve K(\alpha\beta)$ and $c=\breve K (\beta)$ then
$$
\tilde \Sigma\Big((a,\alpha),(b,\alpha\beta),(c,\beta)\Big)=
\frac{\breve K(\alpha^2)}{\breve K(\alpha)} b-c.
$$
\end{corollary}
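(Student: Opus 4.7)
The plan is to unfold the definition of $\tilde\Sigma$ on the given triple, reduce the statement to a single continuant identity, and then settle that identity in two lines via Cayley--Hamilton applied to $M_\alpha$.

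First, by the defining equation~(\ref{TildeSigma}), applied to the triple $\big((a,\alpha),(b,\alpha\beta),(c,\beta)\big)$ in $\G_\otimes(\mu,\nu)$, one has
$$
\tilde\Sigma\big((a,\alpha),(b,\alpha\beta),(c,\beta)\big) \;=\; \breve K\big(\alpha\oplus(\alpha\beta)\big) \;=\; \breve K(\alpha^2\beta),
$$
so the corollary is equivalent to the continuant identity
$$
\breve K(\alpha^2\beta) \;=\; \frac{\breve K(\alpha^2)}{\breve K(\alpha)}\,\breve K(\alpha\beta)\;-\;\breve K(\beta).
$$

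Next, I would use Lemma~\ref{int rec rel} to rewrite the ratio $\breve K(\alpha^2)/\breve K(\alpha)$ as $K_1^n(\alpha)+K_2^{n-1}(\alpha)=\trace M_\alpha$ (where $n=|\alpha|$). Since all sequences occurring in $\G_\otimes(\mu,\nu)$ are even-length concatenations of the even generators $\mu$ and $\nu$, Proposition~\ref{BasicPropertiesReduced}($($iii$)$) gives $\det M_\alpha = 1$, so Cayley--Hamilton yields
$$
M_\alpha^2 \;=\; (\trace M_\alpha)\,M_\alpha \;-\; I.
$$
Multiplying on the right by $M_\beta$ and invoking the multiplicativity of the associated-matrix map from Proposition~\ref{BasicPropertiesReduced}($($ii$)$) — namely $M_{\alpha^2\beta}=M_\alpha^2 M_\beta$ and $M_{\alpha\beta}=M_\alpha M_\beta$ — one gets the matrix identity
$$
M_{\alpha^2\beta} \;=\; (\trace M_\alpha)\,M_{\alpha\beta} \;-\; M_\beta.
$$
Reading off the bottom-left entry of both sides (which is $\breve K$ of the corresponding sequence by the definition of associated reduced matrix) delivers exactly the required identity, which finishes the proof.

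The only sensitive point is the parity assumption: the argument relies on $\det M_\alpha = 1$, i.e.\ on $|\alpha|$ being even. This is not an obstacle in our setting, since every sequence appearing as a coordinate of a vertex of $\G_\otimes(\mu,\nu)$ is an even-length concatenation of $\mu$ and $\nu$. As an alternative path, one can deduce the same continuant identity directly from Theorem~\ref{cab prop} by taking $\lambda$ to be the empty sequence and $\rho=\beta$; under the standard conventions $K(\emptyset)=1$ and $\breve K(\emptyset)=0$ the manipulations in that proof carry over without change, and the result is the stated recurrence.
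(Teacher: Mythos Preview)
Your proof is correct, and your primary route is genuinely different from the paper's. The paper derives the corollary by specialising Theorem~\ref{cab prop} (in effect taking $\lambda$ empty and $\rho=\beta$), exactly the alternative you mention at the end; that theorem in turn is established via a page of continuant manipulations based on Lemma~\ref{cont rel}. Your Cayley--Hamilton argument sidesteps all of that: once $\breve K(\alpha^2)/\breve K(\alpha)=\trace M_\alpha$ is known from Lemma~\ref{int rec rel}, the matrix identity $M_\alpha^2=(\trace M_\alpha)M_\alpha-I$ (valid since $|\alpha|$ is even, so $\det M_\alpha=1$) does the job in one multiplication. In fact the same trick, multiplied on the left by $M_\lambda$ and on the right by $M_\rho$, yields Theorem~\ref{cab prop} itself in two lines, replacing the paper's longer computation. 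What the paper's approach buys is that Theorem~\ref{cab prop} is stated and proved independently as a continuant identity, without any matrix machinery or parity assumption beyond $|\alpha|$ even; what your approach buys is brevity and a transparent structural reason (the characteristic polynomial of $M_\alpha$) for the recurrence. Your handling of the parity point is also fine: all sequences in $\G_\otimes(\mu,\nu)$ are concatenations of the even generators $\mu,\nu$ and hence even.
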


\begin{proof}
We have
$$
\begin{aligned}
\tilde \Sigma\Big((a,\alpha),(b,\alpha\beta),(c,\beta)\Big)
&=
\breve K(\alpha^2\beta)=
\frac{\breve K(\alpha^2)}{\breve K(\alpha)}\breve K(\alpha\beta)-\breve K(\beta)\\
&=
\frac{\breve K(\alpha^2)}{\breve K(\alpha)} b-c.\\
\end{aligned}
$$
Here the first equality follows directly from the definition (see Expression~(\ref{TildeSigma})); the second equality holds
by Theorem~\ref{cab prop}.
\end{proof}

\begin{remark}
Let us mention once more that
the value $\breve K(\alpha^2)/\breve K(\alpha)$ is always a positive integer,
see Lemma \ref{int rec rel}.
\end{remark}

\subsection{Partial answer to Problem~\ref{prob2}}
\label{Partial-answer}

Let us finally prove the following interesting statement that gives a few partial answers to Problem~\ref{prob2}.
(Recall that a sequence $\alpha$ is palindromic if $\alpha=\overline \alpha$.)

	\begin{proposition} \label{useful-proposition}
		For a sequence
		\[
		\alpha=(1,p,a_3,\ldots,a_{n-2},p+1,q),
		\]
		with the subsequence $(a_3,\ldots,a_{n-2})$ being palindromic, we have
		\[
		\frac{\breve K(\alpha^2)}{\breve K (\alpha)}=(q+1){\breve K (\alpha)}.
		\]
	\end{proposition}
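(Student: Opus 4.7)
The plan is to translate the claim into a statement about the trace of the reduced matrix $M_\alpha$, and then exploit the palindromic structure via matrix symmetry. First, by Lemma~\ref{int rec rel} we have
$$
\frac{\breve K(\alpha^2)}{\breve K(\alpha)}
= K_1^{n}(\alpha)+K_2^{n-1}(\alpha)
= \trace(M_\alpha),
$$
while $\breve K(\alpha)=K_1^{n-1}(\alpha)$ is precisely the $(2,1)$-entry of $M_\alpha$. So the whole proposition is the single trace identity $\trace(M_\alpha)=(q+1)K_1^{n-1}(\alpha)$.

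Next, using Proposition~\ref{BasicPropertiesReduced}(ii), I would factor
$$
M_\alpha = M_{(1,p)}\cdot M_\beta \cdot M_{(p+1,q)},
\qquad \beta=(a_3,\ldots,a_{n-2}).
$$
Because every elementary matrix $M_{a_i}=\bigl(\begin{smallmatrix}0&1\\1&a_i\end{smallmatrix}\bigr)$ is symmetric, transposing a product reverses the order of factors, giving $M_{\bar\beta}=M_\beta^{T}$ for any finite sequence. The palindrome hypothesis $\beta=\bar\beta$ therefore forces $M_\beta$ to be symmetric, so I would write $M_\beta=\bigl(\begin{smallmatrix}A&B\\B&C\end{smallmatrix}\bigr)$. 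This symmetry is the one and only place the palindrome assumption enters the proof, and it is the structural ingredient that makes the factor $(q+1)$ appear.

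The remaining step is an explicit $2\times 2$ computation. With
$$
M_{(1,p)}=\begin{pmatrix}1 & p\\ 1 & p+1\end{pmatrix},
\qquad
M_{(p+1,q)}=\begin{pmatrix}1 & q\\ p+1 & (p+1)q+1\end{pmatrix},
$$
a direct multiplication (grouping terms by $A$, $B$, $C$) yields
$$
\trace(M_\alpha)=(1+q)\bigl[A+2(p+1)B+(p+1)^{2}C\bigr],
$$
while the $(2,1)$-entry of $M_\alpha$ comes out to be $A+2(p+1)B+(p+1)^{2}C$ on the nose. Comparing these two expressions gives $\trace(M_\alpha)=(q+1)K_1^{n-1}(\alpha)=(q+1)\breve K(\alpha)$, which combined with step one yields the desired identity.

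The main obstacle is purely bookkeeping in the matrix product; there is no delicate estimate. The conceptual content is concentrated in the observation that palindromicity of the middle block is exactly what is needed to make $M_\beta$ symmetric, which in turn is what makes the coefficients of $A$, $B$, $C$ in $\trace(M_\alpha)$ factor as $(1+q)$ times the coefficients appearing in the $(2,1)$-entry.
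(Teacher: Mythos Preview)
Your proof is correct. Both you and the paper begin with Lemma~\ref{int rec rel} to reduce the claim to the identity $K_1^n(\alpha)+K_2^{n-1}(\alpha)=(q+1)K_1^{n-1}(\alpha)$, but then diverge. The paper works directly with continuants: it shows $K_1^{n-2}(\alpha)=K_3^{n-1}(\alpha)$ using the reversal invariance $K(c_1,\ldots,c_m)=K(c_m,\ldots,c_1)$, the palindromicity of $(a_3,\ldots,a_{n-2})$, and the identity $K(b_1,\ldots,b_m,1)=K(b_1,\ldots,b_m{+}1)$; it then expands $K_1^n=qK_1^{n-1}+K_1^{n-2}$ and recombines $K_3^{n-1}+1\cdot K_2^{n-1}=K_1^{n-1}$ (using $a_1=1$). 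Your route instead interprets the target identity as $\trace(M_\alpha)=(q+1)\cdot(M_\alpha)_{2,1}$, factors $M_\alpha=M_{(1,p)}M_\beta M_{(p+1,q)}$, and uses that palindromicity of $\beta$ is exactly symmetry of $M_\beta$ (since $M_{\bar\beta}=M_\beta^{T}$). The explicit $2\times 2$ multiplication then makes the factor $(q+1)$ appear transparently. Your argument is arguably more conceptual in isolating \emph{why} palindromicity matters (it is literally matrix symmetry), and it avoids the somewhat ad hoc continuant identity $K(\ldots,b_m,1)=K(\ldots,b_m{+}1)$; the paper's version, on the other hand, stays entirely within the continuant formalism used elsewhere and needs no matrix bookkeeping. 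Both are short and equivalent in strength.
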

	
	\begin{proof}
		We start with
		$$
		\begin{aligned}
		 K_1^{n-2}(\alpha)&=K(1,p,a_3,\ldots,a_{n-2})=K(a_{n-2},\ldots,a_3,p,1)\\
		&=K(a_{3},\ldots,a_{n-2},p+1)=K_3^{n-1}.
		\end{aligned}
		$$
Here the second equality is a general property of continuant. In the third equality we us the fact that $(a_3,\ldots,a_{n-2})$ is palindromic; and
 a general classical fact on continuants that
$$
K(b_1\ldots,b_m,1)=
K(b_1\ldots,b_m{+}1).
$$

\vspace{2mm}

Finally we have
		$$
		\begin{aligned}
		\frac{\breve K(\alpha^2)}{\breve K(\alpha)}
        &=K_1^n(\alpha)+K_2^{n-1}(\alpha)\\
		&=\Big(qK_1^{n-1}(\alpha)+K_1^{n-2}\Big)+K_2^{n-1}\\
        &=qK_1^{n-1}(\alpha)+\Big(K_3^{n-1}+1\cdot K_2^{n-1}\Big)\\
		&=q K_1^{n-1}(\alpha)+K_1^{n-1}\\
        &=(q+1)\breve K(\alpha).
		\end{aligned}
        $$
		The first equality is true by Lemma~\ref{int rec rel},
the second and the fourth are the general properties of continuants,
and the third is by the identity shown above.
\end{proof}

\begin{example}\label{SimpleRelationExample}
For the case of trees $\G_\otimes \big((\underline{1})^{2n},(\underline{2})^{2m}\big)$, where $m,n\ge 1$,
we have precisely all the condition of Proposition~\ref{useful-proposition}
(see palindromicity in~\cite{cus1} and~\cite{Matty1}).
Hence the operation can be written in terms of triples:
$$
\tilde \Sigma\Big((a,\alpha),(b,\beta),(c,\gamma)\Big)=3ac-b.
$$
This gives a particular answer to Problem~\ref{prob2} above.
In particular in the case $n=m=1$ we have the relation for the classical case of
Markov trees.
In all these cases the data of Markov triples $T_{\mu,\nu}$
coincides with the data of the triple-graphs $\G_\otimes(\mu,\nu)$.
\end{example}

Let us formulate the  following question (in fact it is a part of Problem~\ref{prob2} above).

\begin{problem}
For which pairs of even sequences $(\mu,\nu)$ can we apply Proposition~\ref{useful-proposition} for all triples of $\G_\oplus(\mu,\nu)$.
\end{problem}

\vspace{2mm}

{\noindent
{\bf Acknowledgements.}
The first author is partially supported by EPSRC grant EP/N014499/1 (LCMH).
We are grateful to A.~Gorodentsev for introducing us to classical theory of Markov spectrum.
}

	\bibliographystyle{plain}
	\bibliography{biblio,biblio2}

\begin{thebibliography}{10}

\bibitem{Aigner1989}
M.~Aigner.
\newblock {\em Markov's theorem and 100 years of the uniqueness conjecture}.
\newblock Springer, Cham, 2013.

\bibitem{Arnold2003}
V.~Arnold.
\newblock Arithmetics of binary quadratic forms, symmetry of their continued
  fractions and geometry of their de {S}itter world.
\newblock {\em Bull. Braz. Math. Soc. (N.S.)}, 34(1):1--42, 2003.
\newblock Dedicated to the 50th anniversary of IMPA.

\bibitem{Arnold2002}
V.~I. Arnold.
\newblock {\em Continued fractions (In Russian)}.
\newblock {M}oscow: {M}oscow {C}enter of {C}ontinuous {M}athematical
  {E}ducation, 2002.

\bibitem{Cohn1955}
H.~Cohn.
\newblock Approach to {M}arkoff's minimal forms through modular functions.
\newblock {\em Ann. of Math. (2)}, 61:1--12, 1955.

\bibitem{Cohn1971}
H.~Cohn.
\newblock Representation of {M}arkoff's binary quadratic forms by geodesics on
  a perforated torus.
\newblock {\em Acta Arith.}, 18:125--136, 1971.

\bibitem{Conway1997}
J.~H. Conway.
\newblock {\em {T}he {S}ensual $(${Q}uadratic$)$ {F}orm}, volume The Carus
  Mathematical Monographs, 26.
\newblock Mathematical Association of America, 1997.

\bibitem{cus1}
T.~Cusick and M.~Flahive.
\newblock {\em The Markoff and Lagrange Spectra}.
\newblock American Mathematical Society, 1989.

\bibitem{Davenport1938}
H.~Davenport.
\newblock On the product of three homogeneous linear forms. {I}.
\newblock {\em Proc. London Math. Soc.}, 13:139--145, 1938.

\bibitem{Davenport1938a}
H.~Davenport.
\newblock On the product of three homogeneous linear forms. {II}.
\newblock {\em Proc. London Math. Soc.(2)}, 44:412--431, 1938.

\bibitem{Davenport1939}
H.~Davenport.
\newblock On the product of three homogeneous linear forms. {III}.
\newblock {\em Proc. London Math. Soc.(2)}, 45:98--125, 1939.

\bibitem{Davenport1941}
H.~Davenport.
\newblock Note on the product of three homogeneous linear forms.
\newblock {\em J. London Math. Soc.}, 16:98--101, 1941.

\bibitem{Davenport1943}
H.~Davenport.
\newblock On the product of three homogeneous linear forms. {IV}.
\newblock {\em Proc. Cambridge Philos. Soc.}, 39:1--21, 1943.

\bibitem{Buket2018}
B.~Eren and A.~M. Uluda\u{g}.
\newblock Some properties of {J}imm of some {M}arkov irrationals.
\newblock {\em preprint}, page 19~pp, 2018.

\bibitem{Freiman1975}
G.~A. Freiman.
\newblock {\em Diophantine approximations and the geometry of numbers
  $(${M}arkov's problem$)$}.
\newblock Kalinin. Gosudarstv. Univ., Kalinin, 1975.

\bibitem{Fricke1896}
R.~Fricke.
\newblock \"{U}ber die {T}heorie der automorphen {M}odulgruppen.
\newblock {\em Nachrichten Ges. Wiss. G\"ottingen}, pages 91--101, 1896.

\bibitem{frob}
G.~Frobenius.
\newblock \"{U}ber die markoffschen zahlen.
\newblock {\em Preuss. Akad. Wiss. Sitzungsberichte}, pages 458--487, 1913.

\bibitem{Gayfulin2017}
Dmitry Gayfulin.
\newblock Attainable numbers and the {L}agrange spectrum.
\newblock {\em Acta Arith.}, 179(2):185--199, 2017.

\bibitem{conc}
R.~L. Graham, D.~E. Knuth, and O.~Patashnik.
\newblock {\em Concrete Mathematics}.
\newblock Addison-Wesley Publishing Company, 1990.

\bibitem{Gruber1987}
P.~M. Gruber and C.~G. Lekkerkerker.
\newblock {\em Geometry of numbers}, volume~37 of {\em North-Holland
  Mathematical Library}.
\newblock North-Holland Publishing Co., Amsterdam, second edition, 1987.

\bibitem{oleg2}
O.~Karpenkov.
\newblock Elementary notions of lattice trigonometry.
\newblock {\em Mathematica Skandinavica}, 102:161--205, 2008.

\bibitem{oleg4}
O.~Karpenkov.
\newblock On irrational lattice angles.
\newblock {\em Functional Analysis and Other Mathematics}, 2:221--239, 2009.

\bibitem{Karpenkov2010}
O.~Karpenkov.
\newblock On determination of periods of geometric continued fractions for
  two-dimensional algebraic hyperbolic operators.
\newblock {\em Math. Notes}, 88(1-2):28--38, 2010.
\newblock {R}ussian version: Mat. Zametki, 88(1), (2010), 30--42.

\bibitem{oleg1}
O.~Karpenkov.
\newblock {\em Geometry of Continued Fractions}.
\newblock Springer, 2013.

\bibitem{MattyOleg}
O.~Karpenkov and M.~van Son.
\newblock Perron identity for arbitrary broken lines.
\newblock {\em accepted to publication at JNTB arXiv:1708.07396, preprint},
  page 14~pp, 2018.

\bibitem{Katok2003}
S.~Katok.
\newblock Continued fractions, hyperbolic geometry and quadratic forms.
\newblock In {\em M{ASS} selecta}, pages 121--160. Amer. Math. Soc.,
  Providence, RI, 2003.

\bibitem{Klein1895}
F.~Klein.
\newblock \"{U}ber eine geometrische {A}uffassung der gew\"ohnliche
  {K}ettenbruchentwicklung.
\newblock {\em {N}achr. {G}es. {W}iss. {G}\"ottingen {M}ath-{P}hys. Kl.},
  3:352--357, 1895.

\bibitem{Klein1896}
F.~Klein.
\newblock Sur une repr\'esentation g\'eom\'etrique de d\'eveloppement en
  fraction continue ordinaire.
\newblock {\em Nouv. Ann. Math.}, 15(3):327--331, 1896.

\bibitem{Korkine1873}
A.~Korkine and G.~Zolotareff.
\newblock Sur les formes quadratiques.
\newblock {\em Mathematische Annalen}, 6:366--389, 1873.

\bibitem{Lewis1997}
J.~Lewis and D.~Zagier.
\newblock Period functions and the {S}elberg zeta function for the modular
  group.
\newblock In {\em The mathematical beauty of physics ({S}aclay, 1996)},
  volume~24 of {\em Adv. Ser. Math. Phys.}, pages 83--97. World Sci. Publ.,
  River Edge, NJ, 1997.

\bibitem{Manin2002}
Y.~I. Manin and M.~Marcolli.
\newblock Continued fractions, modular symbols, and noncommutative geometry.
\newblock {\em Selecta Math. (N.S.)}, 8(3):475--521, 2002.

\bibitem{mar1}
A.~Markoff.
\newblock Sur les formes quadratiques binaires indéfinies.
\newblock {\em Mathematische Annalen}, 15:381--407, 1879.

\bibitem{mar2}
A.~Markoff.
\newblock Sur les formes quadratiques binaires indefinies. (second m\'emoire).
\newblock {\em Mathematische Annalen}, 17:379--399, 1880.

\bibitem{Remak1924}
R.~Remak.
\newblock {\"{U}}ber indefinite bin\"are quadratische {M}inimalformen.
\newblock {\em Math. Ann.}, 92(3-4):155--182, 1924.

\bibitem{Series1985}
C.~Series.
\newblock The {G}eometry of {M}arkov {N}umbers.
\newblock {\em The Mathematical Intelligenser}, 7(3):20--29, 1985.

\bibitem{SorV2017}
A.~Sorrentino and A.~P. Veselov.
\newblock Markov {N}umbers, {M}ather's $\beta$ function and stable norm.
\newblock {\em preprint}, page 9~pp, 2017.

\bibitem{SV2017-2}
K.~Spalding and A.~P. Veselov.
\newblock Growth of values of binary quadratic forms and {C}onway rivers.
\newblock {\em To appear in Bulletin of the LMS}, page 18~pp, 2017.

\bibitem{SV2017}
K.~Spalding and A.~P. Veselov.
\newblock {L}yapunov spectrum of {M}arkov and {E}uclid trees.
\newblock {\em Nonlinearity}, 30:4428--4453, 2017.

\bibitem{SV2018}
K.~Spalding and A.~P. Veselov.
\newblock {C}onway river and {A}rnold sail.
\newblock {\em To appear in Arnold Mathematical Journal}, page 9~pp, 2018.

\bibitem{Swinnerton-Dyer1971}
H.~P.~F. Swinnerton-Dyer.
\newblock On the product of three homogeneous linear forms.
\newblock {\em Acta Arith.}, 18:371--385, 1971.

\bibitem{Matty1}
M.~van Son.
\newblock Palindromic sequences of the {M}arkov spectrum.
\newblock {\em arXiv:1804.10802, preprint}, page 14~pp, 2018.

\end{thebibliography}

\vspace{2cm}

\end{document}